\documentclass{article}
\usepackage[scale=0.65]{geometry}
\usepackage{graphicx, nicefrac}

\usepackage{amsthm, amssymb, amsmath, dsfont, mathrsfs, bm, color, varwidth, upgreek, float}
\newtheorem{theorem}{Theorem}[section]
\newtheorem{definition}[theorem]{Definition}
\newtheorem{remark}[theorem]{Remark}
\newtheorem{lemma}[theorem]{Lemma}
\newtheorem{proposition}[theorem]{Proposition}
\newtheorem{corollary}[theorem]{Corollary}
\newtheorem*{theorem*}{Theorem}
\theoremstyle{definition}
\newtheorem*{definition*}{Definition}

\DeclareMathOperator{\e}{\mathbb{E}}
\DeclareMathOperator{\p}{\mathbb{P}}

\usepackage[affil-sl]{authblk}
\usepackage{orcidlink}
\title{Competitive displacement in\\ the grass-bushes-trees process on a scale-free network}

\author[ ]{John Fernley}
\author[ ]{Daniel Valesin}
\affil[ ]{University of Warwick Department of Statistics}
\affil[ ]{\rm \{john.fernley, daniel.valesin\} @warwick.ac.uk}
\date{\today}

\usepackage{todonotes}

\usepackage[maxnames=6, backend=bibtex, url=false, isbn=false, doi=false, eprint=false]{biblatex}
\renewbibmacro{in:}{}
\begin{document}

\maketitle

\begin{abstract}
\noindent
	The grass-bushes-trees process is a two-type contact process in which the types face asymmetric birth mechanisms. Vertices of a graph can either be occupied by a tree, occupied by a bush, or empty (containing grass). The trees die with rate~1 and give birth with rate~$\lambda_{\mathcal T}$ on any neighbouring vertex (possibly taking the place of a bush). Bushes die with rate~1 and give birth with rate~$\lambda_{\mathcal B}$ on any empty neighbouring vertex; in particular, they are forbidden from taking the place of trees. We study this process on a power law random graph, namely the erased configuration model.  We study which graph parameters lead to the possibility of coexistence versus the necessity of competitive displacement, i.e. joint metastability or fast extinction of the bushes.

\vspace{0.5em}

{\scriptsize
\noindent
2020 Mathematics Subject Classification: 05C80; 60K35; 82C22.

\noindent
Keywords: multitype contact process; random graph; metastability; coexistence. 
}
\end{abstract}

\section{Introduction and results}

The contact process, or SIS model, is a simple Markov model for an infection on a graph. The two mechanisms are that each infected vertex recovers at rate $1$, and each edge between an infected and a healthy vertex communicates the infection at rate $\lambda>0$. This process can be built from the \emph{graphical construction} \cite{MR2108619}, in which an independent Poisson process is attached to each vertex and to each directed edge, and the infection through time is then a deterministic function of the arrival marks in those processes.

The grass-bushes-trees process is a simple multitype contact process, building on these dynamics to model competing spreading. 
This two-type contact process is constructed from two independent contact process graphical constructions of rates~$\lambda_{\mathcal B}$ and~$\lambda_{\mathcal T}$. The contact process of type ${\mathcal T}$, the trees, runs independently, while the contact process of type ${\mathcal B}$, the bushes, is instantaneously removed from any site occupied by a tree. All previous works \cite{gbt_interface,gbt_Zd_worse,gbt_Zd_better,andjel2010tightness} have considered this process on a lattice.

The name suggests that this removal is caused by the trees depriving smaller plants of light, but it might also model competition between viruses, see the conclusion of \cite{du2022viral} for a range of empirical evidence in this area. The \emph{competitive exclusion principle} \cite{volterra} is the idea that one of two species competing for the same resource will out-compete the other. Through the last hundred years of biology research, this principle has been applied in various contexts and shown to not hold in others. With regards to viruses, in \cite{nickbakhsh2019virus} they examine the reason that ``common cold infections are less frequent during flu seasons'' so there is an empirical interaction here to explain.
Mean-field compartmental models of the interaction between viral infections were studied in \cite{MR3147742,lazebnik2022generic} and simulation-based approaches to the same question are in \cite{pinotti2020interplay, vafadar2021competitive}.

For the purposes of social modelling, foremost the virus context, we consider this process on a scale-free network model. Social graphs are thought to have a power-law tail in their degree distribution, so we will study the configuration model graph $G_n$ on $[n]=\{1,\dots,n\}$ with i.i.d. stub-counts drawn from a distribution with mass function $p$, satisfying:
\begin{equation}\label{eq_condition_stubcount}
	p(\{n:n \ge 3\})=1 \qquad \text{and}\qquad \lim_{n \to \infty} \frac{p(n)}{\beta n^{-\tau}} = 1.
\end{equation}

This means that every vertex $i\in[n]$ gets some number $D_i$ of ``stubs'' from which to construct the configuration model, and we condition that $\sum D_i$ is even so that every stub can be paired. The total number of stubs grows linearly when $\tau>2$, but we will also consider $\tau \in (1,2)$ where it is of order $n^{{1}/{(\tau-1)}}$ in probability.

We then move to the \emph{erased configuration model} version of the graph  \cite{remco}, which is the largest simple graph contained in the multigraph. Equivalently this could be thought of as infection processes on the full multigraph, but with infection dynamics where each vertex infects each of its unique neighbours at constant rate.
The erased configuration model in expectation has $\Theta(n \log n)$ edges for $\tau\in(1,2]$, so it is closer to a sparse network than the multigraph version.

The study of the contact process on scale-free networks began as a response to the physics literature: first the paper of \cite{PhysRevLett.86.3200} had a prediction that the metastable epidemic density in a preferential attachment graph would be $e^{-\Theta(1/\lambda)}$,  which was corrected to $\lambda^{\Theta(1)}$ by \cite{MR2298278}. This article is about rank one network models, and their investigation came later, with another incorrect prediction of no epidemics at small $\lambda$ values when $\tau>3$ by \cite{pastor2001epidemic},
which was corrected by \cite{chatterjee2009contact} who find a positive probability of a stretched-exponential epidemic period for all $\tau>2$ and $\lambda>0$. Later, this was strengthened to an exponential epidemic period by \cite{mountfordvalesinyao}.
This controversial history reveals the difficulty in accurate simulations of the contact process (and so also a reason to expect some small-$n$ effects in our Figure \ref{fig_sim}).


Let~$G_n$ be the erased configuration model with degree distribution~$p$ satisfying~\eqref{eq_condition_stubcount} with~$\tau > 1$. In our first result, we rule out coexistence completely when $\tau <2$.
\begin{theorem}\label{thm_main_heavy}
	Assume that~$\tau \in (1,2)$.
	 Then, there exist~$C > 0$ and a sequence of positive real numbers~$(\eta_n)_{n \ge 1}$ with~$\eta_n \to 0$ such that the following holds. With high probability as~$n \to \infty$,~$G_n$ is such that the grass-bushes-trees process~$(\mathcal T_t,\mathcal B_t)_{t \ge 0}$ on~$G_n$ with parameters~$\lambda_{\mathcal B}$ and~$\lambda_{\mathcal T}$ satisfies
	\begin{equation}\label{eq_no_coex}
		\sup\mathbb P(\mathcal T_{C \log n} \neq \varnothing,\; \mathcal B_{C \log n} \neq \varnothing) < \eta_n,
	\end{equation}
	where the supremum is taken over all initial configurations~$(\mathcal T_0,\mathcal B_0)$.
\end{theorem}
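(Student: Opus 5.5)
Our approach rests on the structure of the erased configuration model when $\tau\in(1,2)$. With high probability the graph has an ``ultra-hub'' core: the vertices of largest stub-count, of order $n^{1/(\tau-1)}$, are pairwise adjacent. Moreover, a positive fraction of all vertices, and in fact every vertex of stub-count at least $3$ with probability tending to one, is adjacent to the maximal-degree vertex $v^\star$, or to one of a bounded number of top hubs. The reason is that a stub attaches to $v^\star$ with probability of order $D_{v^\star}/\sum_i D_i$, and this ratio is bounded below. We will first establish, as a graph lemma holding with high probability, that the diameter of $G_n$ is bounded and that all but $o(n)$ vertices are adjacent to a common hub set $H$ whose size is bounded. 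Every $v\in H$ has degree at least $n^{c}$ for some $c>0$.

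Next we handle the dynamics. Trees evolve as an autonomous contact process with parameter $\lambda_{\mathcal T}$. On such a star-dense graph, once any tree reaches a hub in $H$ it survives there for a time exponential in the hub's degree. During that time it re-infects its neighbourhood at a density bounded below, because each leaf is refreshed at rate $\lambda_{\mathcal T}$ and dies at rate $1$. We split according to the fate of $\mathcal T$. On the event that $\mathcal T$ dies out before time $\tfrac C2\log n$, there is nothing to prove. Otherwise the trees either die quickly or invade $H$: from any vertex there is a bounded path to $H$, so the probability of surviving $\tfrac C2\log n$ without hitting $H$ is $o(1)$, uniformly over initial configurations. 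A rigorous version couples with a process on the star-of-hubs and uses a union bound over at most $n$ starting sites, since by additivity $\mathcal T$ is the union of the single-site processes.

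Once trees occupy the hubs, we show that the bushes go extinct by time $C\log n$. The key point is that bushes are removed whenever a tree is born on their site, and they cannot use any site held by a tree. A bush at a leaf $u$ adjacent to a hub $h$ is overwritten at rate at least $\lambda_{\mathcal T}$ whenever $h$ is a tree. The hubs themselves are infected by trees at rate of order $\lambda_{\mathcal T}\cdot(\text{density})\cdot\deg(h)\to\infty$, so bushes essentially never hold a hub. Without hubs, the graph induced on non-hub vertices is very sparse: the remaining edges form a subcritical structure of bounded-degree components, up to $o(n)$ exceptions. On this structure a contact process with any $\lambda_{\mathcal B}$, subject to additional killing at rate $\lambda_{\mathcal T}$ on leaves of tree-hubs, dies in time $O(\log n)$ by a union bound on expected survival. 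This last step is the main obstacle. The difficulty is that the non-hub part is not uniformly tree-like, and the bushes can exploit medium-degree vertices that are not in $H$. I would try first to enlarge $H$ to include all vertices of degree at least $(\log n)^K$. The trees keep such vertices occupied except on time sets of small density. I would then bound the bushes by a branching random walk on the remaining bounded-degree pieces, whose each site suffers a death rate $1+\lambda_{\mathcal T}\cdot(\text{number of tree-hub neighbours})$. We aim to show this is subcritical, using that almost every vertex has a hub neighbour. Summing the error probabilities gives $\eta_n\to0$.
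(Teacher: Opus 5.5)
You handle the trees essentially as the paper does. Bounded diameter lets trees reach every high-degree vertex in $O(\log n)$ time, light its star, and keep it semi-lit via Lemma~\ref{lem_all_stars} and a union bound; the paper's core $\{i\le n^{1-\epsilon}\}$ plays the role of your enlarged $H$.

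\textbf{The gap.} The gap is the bush-extinction step, which you flag yourself, and the route you propose for it does not work.

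First, the hubs cannot be treated as essentially bush-free. A hub $h$ of degree $d$ is vacated at rate $1$. When many of its neighbours carry bushes, a bush takes $h$ before a tree does with probability bounded below. It then lives for time of order $1/d$, and in that time makes a number of birth attempts into its neighbourhood that is of order one and large when $\lambda_{\mathcal B}/\lambda_{\mathcal T}$ is large. So every hub reseeds the low-degree part at rate of order one, and this is exactly the effect that has to be controlled.

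Second, a branching random walk on the non-hub pieces with death rate $1+\lambda_{\mathcal T}\cdot(\text{number of tree-hub neighbours})$ is not subcritical for large $\lambda_{\mathcal B}$, which the theorem must cover. A single edge inside a piece, whose endpoints have at most $k$ hub neighbours each, already gives mean offspring at least $\lambda_{\mathcal B}/(1+k\lambda_{\mathcal T})>1$. A hub neighbour adds only $O(\lambda_{\mathcal T})$ killing, so ``almost every vertex has a hub neighbour'' is not what kills the bushes.

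The graph claim that all but $o(n)$ vertices neighbour a hub set of \emph{bounded} size is also false. For fixed $K$, $\sum_{k\le K}D_{(k)}/L_n$ does not tend to $1$, so a positive fraction of degree-$3$ vertices miss every one of the top $K$ hubs. Allowing $o(n)$ exceptional vertices would in any case leave room for bushes to persist.

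Finally, using ``trees occupy $h$ except on a set of small density'' inside a bush argument requires decorrelating the trees at $h$ from the bush history near $h$. Your sketch does not address this.

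\textbf{What the paper uses instead.} There are four ingredients.
(i) Once the core is removed, the rest splits into satellites of at most $m$ vertices (Lemma~\ref{lem_sizes_satellites}). Bushes confined to a satellite therefore die in a time with exponential moments (Lemma~\ref{lem_prelim_moment}(a)), for every $\lambda_{\mathcal B}$. What is used is finiteness, not subcriticality.
(ii) A core vertex $i\ge n^\epsilon$ has at most about $(n^\epsilon/i)^{\frac{2-\tau}{\tau-1}}\,n/i$ satellite neighbours (Lemma~\ref{lem_degrees_into_sats}).
(iii) The trees are replaced by the independent bonsai product chain, and bush deaths in the core are switched off. This gives $\mathbb P(i\notin\bm{\beta}_t\mid\mathscr F_t)=O((\log n)^{10}\,i/n)$ even given the full bush history (Lemma~\ref{lem_thinning}).
(iv) The process $e^{\alpha t/2}\big(\sum_{i\in\mathscr C}(n/i)^\kappa\mathds 1\{i\in\mathcal B_t\}+\sum_S n^\eta e^{\alpha(\sigma_S(t)-t)}\mathds 1\{\mathcal B_t\cap S\neq\varnothing\}\big)$ is a supermartingale. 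The look-ahead $\sigma_S(t)$ absorbs the finite but $\lambda_{\mathcal B}$-dependent lifetimes on satellites. The degree-dependent weights let the killing rate $\asymp(n/i)^{1+\kappa}$ of a bush at core vertex $i$ dominate both its thinned births into the core and its births into satellites.

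That balance is where $\tau<2$ genuinely enters, through $\epsilon<\frac{2-\tau}{2(\tau-1)}$, $\kappa=2\epsilon$ and $\eta\in(2\epsilon^2,\epsilon-2\epsilon^2)$. Without some such multi-scale accounting of reseeding through the core, your outline does not give extinction of the bushes in time $O(\log n)$.
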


The following two theorems divide the lighter-tail behaviour. First, we show that when $\tau\geq 2$, any $\lambda_{\mathcal T}$ has some set of $\lambda_{\mathcal B}$ where we see competitive displacement.

\begin{theorem}\label{thm_no_coex_lambdas}
	Assume that~$\lambda_{\mathcal B} \le \lambda_{\mathcal T}$.
	 Then, there exist~$k > 0$ and a sequence of positive real numbers~$(\eta_n)_{n \ge 1}$ with~$\eta_n \to 0$ such that the following holds. With high probability as~$n \to \infty$,~$G_n$ is such that the grass-bushes-trees process~$(\mathcal T_t,\mathcal B_t)_{t \ge 0}$ on~$G_n$ with parameters~$\lambda_{\mathcal B}$ and~$\lambda_{\mathcal T}$ satisfies
	\begin{equation}\label{eq_no_coex2}
		\sup\mathbb P(\mathcal T_{n^k} \neq \varnothing,\; \mathcal B_{n^k} \neq \varnothing) < \eta_n,
	\end{equation}
	where the supremum is taken over all initial configurations~$(\mathcal T_0,\mathcal B_0)$.
\end{theorem}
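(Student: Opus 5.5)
The plan is to couple the bushes to the tree graphical construction, so that a bush can only survive along infection paths that trees could also use, and then show that by time~$n^k$ the tree process has either died out or taken over every such path. For the coupling, since $\lambda_{\mathcal B} \le \lambda_{\mathcal T}$, I would build both processes from one graphical construction. I take recovery marks at rate~$1$ on each vertex and transmission marks at rate~$\lambda_{\mathcal T}$ on each directed edge, and I keep each transmission mark independently with probability $\lambda_{\mathcal B}/\lambda_{\mathcal T}$ as a bush arrow. Trees use every arrow; bushes use only the retained arrows and are blocked by sites occupied by trees.

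Under this coupling, $(x,t)$ is occupied by a tree if and only if the dual contact process of rate~$\lambda_{\mathcal T}$ started from $(x,t)$ and run backwards hits $\mathcal T_0$ at time~$0$. Equivalently, the forward tree process $\mathcal T_s$ and the dual $\hat\zeta^{(x,t)}_{t-s}$ intersect for some $s\in[0,t]$. A bush at $(x,t)$ requires a bush path from $\mathcal B_0$ to $(x,t)$. Every bush path is also a $\lambda_{\mathcal T}$-infection path, so the dual from $(x,t)$ must survive all the way back to time~$0$. Hence $\{x\in\mathcal B_t\}$ is contained in the event that this dual survives to time~$0$ but never meets the forward tree set.

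The probabilistic core is a uniform \emph{forward–backward intersection} estimate for the $\lambda_{\mathcal T}$ contact process on $G_n$. With high probability over $G_n$, I want that for every initial set $A\neq\varnothing$ and every vertex $x$, with $t=n^k$,
\[
\mathbb P\bigl(\text{forward from } A \text{ survives to } t/2,\ \text{dual from }(x,t)\text{ survives to } t/2,\ \text{they never meet}\bigr) \le n^{-2}.
\]
A union bound over the $n$ vertices then gives the theorem: at time $n^k$ either $\mathcal T=\varnothing$, or $\mathcal B=\varnothing$ except on an event of probability $\eta_n\to0$. Monotonicity of the contact process should make the supremum over initial configurations harmless, since a larger $\mathcal T_0$ only helps, and the bound must be uniform in $A$ anyway.

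To prove the intersection estimate, I would use the structure of $G_n$. For $\tau\ge2$ there are hubs of degree $n^{c}$, and a star of degree~$d$ keeps the contact process alive for time $e^{c\lambda^2 d}$. By the results of \cite{chatterjee2009contact,mountfordvalesinyao}, the process passes between such hubs in polylogarithmic time. The scheme is a restart argument over $n^{k/2}$ disjoint windows of length $(\log n)^{C}$. In each window, a surviving forward process has probability at least $n^{-\epsilon}$ to ignite a fixed hub~$v^*$ (the maximal degree vertex) and keep it lit for a long stretch; the same holds for the dual by self-duality. Failures are handled by the Markov property, because in each window the surviving process either dies or gets a fresh chance. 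Once both the forward and the dual processes hold $v^*$ lit during overlapping time intervals, they intersect, and $x$ is a tree site. Choosing $k$ large turns $(1-n^{-\epsilon})^{n^{k/2}}$ into $o(n^{-2})$.

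The main obstacle is exactly this step. It needs a lower bound, uniform over all starting configurations and valid for any $\lambda_{\mathcal T}>0$ and $\tau\ge2$, on the probability that a process alive at the start of a window reaches and lights $v^*$ within polylog time. A process could linger on low-degree parts of the graph without dying. I would first try to show that from any single occupied vertex, within time $O(\log n)$, the infection reaches a vertex of degree at least $(\log n)^{C'}$ with probability $\ge n^{-\epsilon}$, using a short path to the hub cluster, which is typical in $G_n$. From there I would climb to $v^*$ via the star-to-star propagation lemmas of \cite{mountfordvalesinyao}. If $\lambda_{\mathcal T}$ is so small that such ignition is impossible, the survival time of the tree process itself should then be polynomial, which again yields \eqref{eq_no_coex2}.
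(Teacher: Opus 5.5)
Your architecture is the paper's. You put both types on a single graphical construction; your thinning coupling is a valid substitute for the paper's reduction, via monotonicity in $\lambda_{\mathcal B}$, to $\lambda_{\mathcal B}=\lambda_{\mathcal T}$. You observe that a bush at $(x,T)$ requires the dual from $(x,T)$ to survive without meeting the forward tree process. You then force both processes onto a hub $v^*$ of degree at least $n^\gamma$, with restarts handled by the Markov property.

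The gap is the step you call the main obstacle. The uniform lower bound on the probability of lighting $v^*$ within a window is never proved, and the route you sketch does not work as stated. Climbing from degree $(\log n)^{C'}$ to $v^*$ with the star-to-star estimates of \cite{mountfordvalesinyao} uses results for $\tau>2$ on the non-erased model. That gives nothing at $\tau=2$, which this theorem must cover, since Theorem~\ref{thm_main_heavy} only handles $\tau<2$. Your fallback is also wrong: ignition is never impossible. Once $S^*$ is lit, the trees survive for time $\exp\{c_{\mathrm{star}}(\lambda^2\wedge 1)n^\gamma\}$ by Lemma~\ref{lem_all_stars}(b), so small $\lambda_{\mathcal T}$ does not force polynomial extinction of the trees.

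The missing idea is that a logarithmic diameter is all you need. With high probability $\mathrm{diam}(G_n)\le C\log n$ (Lemma~\ref{lem_diameter}, whose proof works for every $\tau$). So from any single occupied vertex, however long the process has been lingering in low-degree regions, the infection can follow a fixed geodesic to $v^*$ in $\lceil C\log n\rceil$ unit steps and then light $S^*$ (Lemma~\ref{lem_all_stars}(a)). With $\lambda=\lambda_{\mathcal T}$, this happens with probability at least $q=(e^{-1}(1-e^{-\lambda}))^{\lceil C\log n\rceil}c_{\mathrm{star}}(\lambda\wedge 1)=n^{-C'}$, where $C'$ depends on $\lambda$ and $C$. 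As you already noticed, any fixed polynomial suffices. Iterating over $\lfloor (T/3)/(\lceil C\log n\rceil+1)\rfloor$ windows gives failure probability at most $\exp\{-qT/(6C\log n)\}$, which is negligible once $T=n^k$ with $k$ large. This is exactly why the theorem is stated at time $n^k$.

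Two smaller repairs are needed.
\begin{itemize}
\item Two stars lit at the same time need not share an occupied vertex, since the lit threshold is below half the star. Use the centre-occupation part of Lemma~\ref{lem_all_stars}(b) to get $\int_{T/3}^{2T/3}\bm{\xi}_t(v^*)\,\mathrm{d}t>T/6$ for both the forward process and the dual, and conclude by pigeonhole.
\item If each side only survives to $t/2$, the common window $[\sigma,t-\hat\sigma]$ can be arbitrarily short. Require forward ignition by time $T/3$ and dual ignition by backward time $T/3$ (forward time $2T/3$), as the paper does with $E_x$ and $E_x'$. This also removes the need for uniformity in $\mathcal T_0$: $\mathcal T_T\neq\varnothing$ gives some $x\in\mathcal T_0$ with $\bm{\xi}^x_T\neq\varnothing$, and single-vertex events can then be union-bounded over all $x$.
\end{itemize}
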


\begin{remark}\label{remark_tree_survival}
	Looking at the bounds~\eqref{eq_no_coex} and~\eqref{eq_no_coex2}, letting~$t_n$ be either~$C\log n$ (in the case of~\eqref{eq_no_coex}) or~$n^k$ (in case of~\eqref{eq_no_coex2}), one could imagine that the reason that~$\mathbb P(\mathcal T_{t_n} \neq \varnothing,\; \mathcal B_{t_n} \neq \varnothing)$ is low is that~$\mathbb P(\mathcal T_{t_n} = \mathcal B_{t_n}=\varnothing)$ is close to~1. This is however not the case, at least for certain choices of~$(\mathcal T_0,\mathcal B_0)$. For instance, if~$v^*$ is a vertex of high degree, say larger than~$n^a$ for some~$a > 0$, and~$(\mathcal T_0,\mathcal B_0) =(\{v^*\}, G_n \setminus \{v^*\})$, then the trees are likely to survive for a long time (at least~$\exp(c_\lambda n^a)$, for some~$c_\lambda > 0$), and in case they do,~\eqref{eq_no_coex} guarantees that the bushes are likely all gone by time~$t_n$.
\end{remark}

Our final theorem is the coexistence case: in this lighter tail region $\tau>2$, we can have coexistence when $\lambda_{\mathcal B}$ is large, $\lambda_{\mathcal T}$ is small, and the initial conditions are stable.

\begin{theorem}\label{thm_coexistence}
	Assume that~$\tau>2$ and fix some $\gamma\in (0,1)$. Then, if we take  $\lambda_{\mathcal T}, \epsilon>0$ sufficiently small and $\lambda_{\mathcal B}$ sufficiently large (independently for all three), there exists~$c>0$ and a sequence of positive real numbers~$(\eta_n)_{n \ge 1}$ with~$\eta_n \to 0$ such that the following holds. With high probability as~$n \to \infty$,~$G_n$ is such that the grass-bushes-trees process~$(\mathcal T_t,\mathcal B_t)_{t \ge 0}$ on~$G_n$ with parameters~$\lambda_{\mathcal B}$ and~$\lambda_{\mathcal T}$ satisfies
	\begin{equation*}
		\inf \mathbb P(\mathcal{B}_{\exp(cn)}\neq \varnothing) > 1-\eta_n,
	\end{equation*}
	where the infimum is taken over all configurations~$(\mathcal T_0,\mathcal B_0)$ with~$|\mathcal T_0| \le \epsilon n$ and~$|\mathcal B_0| \ge \gamma n$.
\end{theorem}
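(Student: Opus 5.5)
We first reduce the claim to a comparison with small auxiliary processes, and then use a restart/renewal argument over blocks of time. Since $\lambda_{\mathcal T}$ is small, the trees are dominated by a subcritical contact process on most of the graph. The high-degree vertices are the exception: for $\tau>2$ the maximal degree is $n^{1/(\tau-1)+o(1)}$, and stars of large degree can sustain the contact process even for tiny $\lambda_{\mathcal T}$.

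\textbf{Step 1: the graph.} We fix a large constant $K$ (depending on $\lambda_{\mathcal B}$, $\lambda_{\mathcal T}$) and call $v$ \emph{bad} if $\deg v>K$. The set $L$ of \emph{good} vertices, those of degree at most $K$, is a positive fraction of $[n]$. We first show that, whp, $G_n[L]$ contains a linear-size expander-like structure. Concretely, it should have a connected subgraph $H$ with $|H|\ge (1-\delta)|L|$, bounded degree, and the property that every subset $S\subseteq H$ with $|S|\ge \gamma' n$ has at least $c|S|$ edges to $H\setminus S$ within bounded distance. This should follow from standard configuration-model expansion estimates, using the minimum degree $3$ from \eqref{eq_condition_stubcount}.

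\textbf{Step 2: bushes dominate a supercritical process.} We need a bound for the bushes on $H$ that holds regardless of the trees. Tree presence in $H$ can be controlled because, on the bounded-degree graph $H$, trees arriving from outside $H$ come at rate at most $\lambda_{\mathcal T}$ per edge. We therefore dominate $\mathcal T_t\cap H$ by a process with spontaneous births at rate $C\lambda_{\mathcal T}$ per vertex, plus subcritical spreading at rate $\lambda_{\mathcal T}$ on degree $\le K$. For $\lambda_{\mathcal T}$ small relative to $K$, this is a product-measure-dominated process with density $O(\lambda_{\mathcal T}K)$ uniformly in time.

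The bushes on $H$ then stochastically dominate a contact process on $H$ with rate $\lambda_{\mathcal B}$ in a random environment of ``blocked'' sites. The blocked sites have density at most $\rho=O(\lambda_{\mathcal T}K)+\epsilon$ initially, and the environment stays sparse thereafter. For $\lambda_{\mathcal B}$ large, a contact process on an expander with a sparse, dynamically refreshing set of killing sites survives. The intended proof is a block or renormalisation argument on $H$: partition time into intervals of length $T$. If at the start of an interval the bushes occupy at least $\gamma' n$ sites, then with probability $1-e^{-cn}$ they occupy at least $\gamma' n$ sites at the end. This comes from a bounded-differences/concentration argument, since each vertex with a neighbouring bush is refilled with probability close to $1$ within time $O(1/\lambda_{\mathcal B})$ while being killed only at rate $1+O(\rho)$.

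\textbf{Step 3: iteration.} We chain $e^{cn}/T$ intervals with a union bound; the initial condition $|\mathcal B_0|\ge\gamma n$ seeds the first interval after the bushes spread into $H$. The main obstacle is Step 2's control of trees uniformly over the exponentially long time horizon. Large-degree vertices may harbour trees forever and keep reseeding $H$, so the domination by a low-density environment must be proved via a stationary comparison: a Poisson immigration plus a subcritical branching process, with a bound valid at every time rather than only initially. We would attempt this by coupling $\mathcal T_t\cap H$ with a process whose one-time marginals are dominated by product Bernoulli$(\rho)$. On top of that, we need a concentration bound for the number of bushes that is robust to correlations in this environment, which we would get by revealing graphical-construction marks locally and applying Azuma's inequality on a bounded-degree graph.
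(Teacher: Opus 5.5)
\textbf{Comparison with the paper's route.} Your overall architecture is sound: trees stay sparse for exponentially long, expansion gives the bushes a positive drift, and you iterate. But it is a much heavier and more local route than the paper's.

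The paper works globally and never restricts to low-degree vertices. It dominates the trees by the \emph{sequoia process}, in which each vertex independently switches on at rate $\lambda_{\mathcal T}\deg(v)$ and off at rate $1$. Since $\tau>2$ gives $\sum_v\deg(v)\le 2\mu n$, the total sequoia size has mean at most $(\delta+2\lambda_{\mathcal T}\mu)n$ when $|\mathcal T_0|\le\delta n$. An exponential-moment bound and an occupation-time argument then keep $|\mathcal T_t|\le\eta n$ up to time $e^{cn}$ (Lemma~\ref{lem_not_many_trees}).

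Finiteness of $\mu$ also implies that any $\eta n$ vertices, hubs included, are incident to at most $\epsilon n$ edges (Lemma~\ref{lem_small_complements}). So it is harmless that hubs may carry trees forever: in total, the trees block bush births and kill bushes along at most $\epsilon n$ edges. Combined with the linear edge boundary of sets of size in $[\gamma n/2,(1-\gamma/2)n]$ (Lemma~\ref{lem_large_complements}), $|\mathcal B_t|$ has up-rate at least $\lambda_{\mathcal B}(\gamma/100-\epsilon)n$ and down-rate at most $|\mathcal B_t|+\lambda_{\mathcal T}\epsilon n$. It is therefore coupled above a birth--death chain with rates $\rho_+m$ and $\rho_-m$, where $\rho_+>\rho_-$, and a gambler's-ruin estimate concludes. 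There is no subgraph $H$, no block decomposition and no spatial concentration. What your local picture could buy (information on where the bushes live, or robustness to killing environments that are only locally sparse) is not needed for this theorem.

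\textbf{The concentration step.} As you state it, this step does not deliver what Step~3 needs. Your union bound over $e^{cn}/T$ blocks requires a per-block failure probability of order $e^{-c'n}$. Azuma by revealing graphical-construction marks vertex by vertex has no bounded differences: the effect of one vertex's marks propagates along chains of arrows and is unbounded even on a bounded-degree graph. Truncating this influence at size $s$ leaves an exceptional event of probability of order $ne^{-cs}$, so you get at best stretched-exponential bounds. The remedy is an exponential martingale inequality for the counting process $|\mathcal B_t|$ itself, whose jumps are $\pm1$ and whose compensator is given by the rates above. That is exactly the paper's birth--death comparison, and once you use it the blocks become unnecessary.

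\textbf{Your ``main obstacle'' is not one.} Each $v\in H$ has degree at most $K$, so trees appear at $v$ at rate at most $\lambda_{\mathcal T}K$ whatever the hubs do. Independent two-state chains therefore dominate $\mathcal T_t\cap H$ directly; you still need a bound that is uniform in time, as in Lemma~\ref{lem_not_many_trees}.

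\textbf{Smaller points.}
\begin{itemize}
\item The expansion property in Step~1 needs an upper bound such as $|S|\le(1-\gamma')|H|$; as written it fails for $S=H$.
\item That property does not follow from minimum degree $3$ alone. It comes from the vertices of degree $>K$ carrying only a small fraction of all stubs, i.e.\ from $\mu<\infty$. This is where $\tau>2$ enters: for $\tau<2$ the low-degree part shatters, cf.\ Lemma~\ref{lem_sizes_satellites}.
\item $K$ should depend only on $\gamma$ and $p$, not on $\lambda_{\mathcal B}$ and $\lambda_{\mathcal T}$. Otherwise ``$\lambda_{\mathcal T}$ small relative to $K$'' is circular, and the three thresholds are not independent as the theorem requires.
\end{itemize}
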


Looking at the trees in isolation, we are talking simply about a contact process on a scale-free network and thus we believe that the results of \cite{mountfordvalesinyao,schapira15} extend to the erased configuration model: that the trees will survive with positive probability for some exponential time in all three theorems. In the $\tau> 2$ case \cite{mountfordvalesinyao} we expect that the proofs of that work would transfer to the erased configuration model, which has the same local limit and globally only differs by $o(n)$ edges. Note however that in the $\tau\in(1,2]$ case \cite{schapira15} erasing actually removes the majority of the edges, so we cannot claim that those proofs transfer: this is why we include Remark \ref{remark_tree_survival} to at least guarantee tree survival for a stretched exponential period.

\begin{figure}[ht]
\centering
\includegraphics[width=0.85\textwidth]{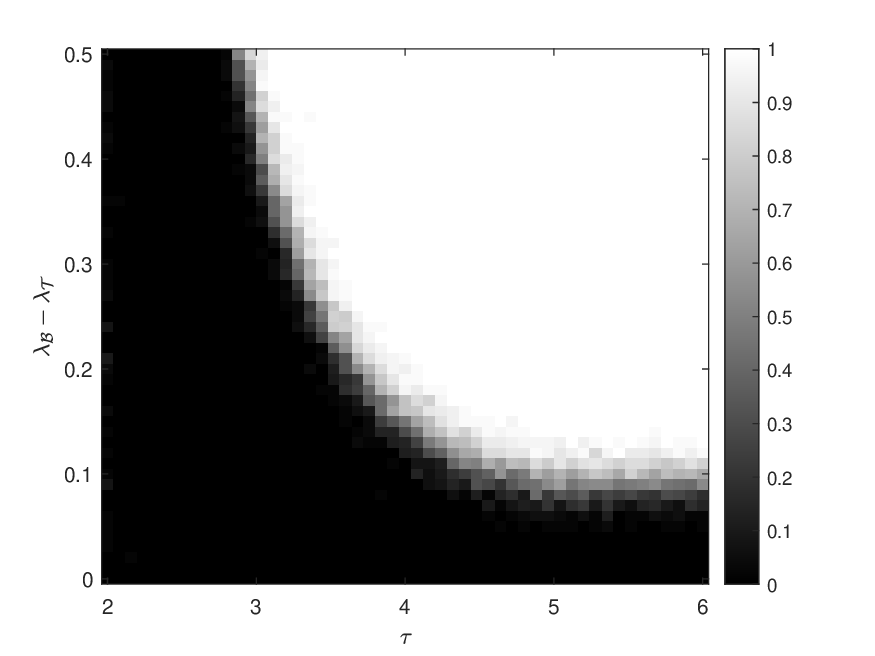}
\caption{A simulation on configuration models with $n=5000$ showing the proportion of $50$ samples in which $100$ bushes remain after $10^5$ infections or recoveries. Initially each vertex is independently occupied by either a bush or a tree with equal probability, and the degrees are drawn from a rounded Pareto distribution with minimum $3$. In all cases we have $\lambda_{\mathcal T}=0.3$ and vary $\lambda_{\mathcal B}\geq\lambda_{\mathcal T}$. Note this figure illustrates all three theorems: the two competitive displacement regimes are the continuing black regions underneath and on the left of the borders, and the coexistence regime is the white we find by going arbitrarily high.}\label{fig_sim}
\end{figure}

The transition at $\tau=2$ is not about infinite mean degree, as in the local limit the degrees already have infinite expectation for $\tau\leq 3$. Rather, we should think in the scenario of  Theorem \ref{thm_main_heavy} that a small stationary infection level is concentrated in the high degree vertices, and $\tau=2$ is the changepoint where the network becomes \emph{vulnerable to attack} in the sense of \cite{MR2076725}. This is relevant because when the network is attack-vulnerable most vertices can only communicate through the large degree vertices, and those are typically occupied by a tree. For a rank one scale-free network like our model, vulnerability to attack coincides with non-sparsity.

Our results should be compared with \cite{MR2537523}, who study a model where instead both types can give birth only onto vacant sites, so neither can displace the other. On the regular tree, they demonstrate \emph{local} coexistence when both contact process parameters are in the weak survival window, and \emph{local} competitive displacement when one of them is larger than the other and in the strong survival phase. This resembles our $\tau>2$ dichotomy although it is a result really about the ergodic behaviour of infinite systems, and of course our coexistence result has to work against the less favourable interaction of trees removing bushes.

An even closer work, \cite{MR4706682} look at two competing first-passage percolation processes on a scale free network with $\tau\in(1,2)$. Their result specialises to the same erased configuration model graph that we consider, and if we make all edge times exponentially distributed it would be essentially an SI version of our SIS, with again the constraint that both types can give birth only onto vacant sites, and the result is then that for any infection rates one of the types occupies all but one vertex (with high probability).
Our results provide an analogous fast extinction in a system with both recovery and reinfection, where the dominant type is metastable rather than fixed at every vertex but one. Moreover we demonstrate when coexistence behaviours are also possible, through the whole range $\tau>1$ of power-law tails.

The proof of Theorem \ref{thm_main_heavy} requires a detailed structural understanding of the network, which we defer to the final section. The most important part of the description is that we give each vertex an iid stub-count and then divide the network into a \emph{core} of the $\lfloor n^{1-\epsilon} \rfloor$ largest stub-counts and the \emph{satellites} are the remaining $n-\lfloor n^{1-\epsilon} \rfloor$. After this division, the components induced by the satellites are uniformly bounded by a constant.

To exploit this, we construct a weighted supermartingale tracking the bush-occupancy of each core vertex and each induced component in the satellites (i.e. each satellite). Because we must control arbitrarily large bush infection rates $\lambda_{\mathcal{B}}$, we have none of the usual criteria for \emph{subcriticality} of the bushes on the satellites but we have a novel remedy for this by looking into the future of the graphical construction at each satellite to determine its weight.

Reinforcing our previous comment on attack vulnerability, it is enough in this construction to stochastically lower bound the trees by an auxiliary \emph{bonsai process}, consisting of independent two-state chains on the core. These bonsais do not interfere at all in $n-\lfloor n^{1-\epsilon} \rfloor$ of the vertices, but they occupy the core enough to obtain the claimed bush extinction in logarithmic time. In the core, we use our high probability graph construction to guarantee that every vertex has a certain number of tree neighbours at any given time. This lower bound is then the birth rate of bonsais at that vertex.

The other parameter regime where we can rule out coexistence is $\lambda_{\mathcal B} \le \lambda_{\mathcal T}$. This is a shorter argument greatly exploiting the \emph{self-duality} of the contact processes: that paths running backwards in time have the same law. Using this we can explore the history of a putative bush vertex at the terminal time to discover a tree in its infection history, and so discover that the vertex was in fact a tree. Exploring every vertex in this way gives in general a polynomial bound on the time until the bushes are extinct, but this could be strengthened to $C\log^2 n$ under an additional assumption like $\lambda_{\mathcal B}\geq 2$ which would make coalescence easier.

In the rest of the article, first in Section \ref{sec_prelim} we will cover some background on the infection and graph models under consideration, then in Section \ref{s_no_coex} we will construct the above supermartingale to rule out coexistence when $\tau<2$. For the lighter tails, in Section \ref{s_second_no_coex} we have the above duality argument and in Section \ref{s_coex} we show coexistence from the literature bounds on the edge-boundaries of the graph. Finally, Section \ref{s_graph_properties} collects proofs of the required graph properties through all three cases.

\section{Preliminaries on the configuration model and the contact process}\label{sec_prelim}

First, we introduce the basic properties of the random graph model that we consider, and of the contact process which is the exact model of the trees, and of the bushes in the absence of trees.

\subsection{Configuration model}

Given a sequence of positive integers $(d_1,\ldots,d_n)$ with even sum, the \emph{configuration model} is the random multigraph on $[n]$ constructed as follows. Attach $d_i$ distinct half-edges, or \emph{stubs}, to each vertex $i$, and choose a perfect matching of all stubs uniformly at random. Each matched pair forms an edge between the vertices to which its stubs are attached; self-loops and parallel edges are allowed. Equivalently, the matching can be revealed by sequentially pairing an unmatched stub with another uniform random unmatched stub.

In this paper, we sample $D_1,\ldots,D_n$ independently with law $p$ satisfying \eqref{eq_condition_stubcount}, condition on $\sum_{i=1}^n D_i$ being even, and use these as the stub-counts in the construction above. The graph $G_n$ is then obtained by \emph{erasing} the multigraph to a simple graph, deleting all self-loops and replacing sets of parallel edges by a single edge. Since $p$ assigns positive probability to both parities, the even-sum condition has probability tending to $1/2$, so the conditioning preserves events that hold with high probability for the independent stub-counts.

We will need to use several properties of the random graph~$G_n$ which hold with high probability as~$n \to \infty$. We state these properties in the sections about the grass-bushes-trees process, but postpone the proofs to Section~\ref{s_graph_properties}. For now, we only state the following, which follows from~\cite[Theorem 4.25]{remco}.
\begin{lemma}\label{lem_connectedness}
For all~$\tau > 1$, with probability tending to~$1$ as~$n \to \infty$, the random graph~$G_n$ is connected.
\end{lemma}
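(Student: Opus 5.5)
The plan is to prove the lemma directly for the multigraph and then pass to $G_n$. Erasing self-loops and collapsing parallel edges does not change which pairs of vertices are adjacent. So the connected components of $G_n$ are exactly those of the configuration-model multigraph with stub-counts $D_1,\dots,D_n$. It therefore suffices to show that the multigraph is connected with high probability.

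We may also drop the parity conditioning. As noted above, $\sum_i D_i$ is even with probability tending to $1/2$. Hence any event that has probability tending to~$1$ under i.i.d.\ stub-counts (with the obvious convention for pairing when the sum is odd) still has probability tending to~$1$ after conditioning on an even sum.

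For $\tau>2$ (and $\tau=2$ with a logarithmic correction), the degree distribution has minimum degree at least $3$ by~\eqref{eq_condition_stubcount}. The empirical degree sequence then satisfies the regularity conditions of \cite[Theorem 4.25]{remco}, and that result gives connectivity with high probability.

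The main check is that the hypotheses really hold, namely the convergence of the empirical distribution and of the first moment; when $\tau\le 2$ the first moment is infinite. For that range I would argue directly. With $\ell_n=\sum_i D_i$, the probability that some set $S$ of vertices with total stub-count $k\le \ell_n/2$ is closed under pairing is at most of order $\binom{\ell_n/2}{k/2}^{-1}$. There are at most $\binom{n}{s}$ such sets with $|S|=s$, and $k\ge 3s$. Summing over $s$, the contribution of small sets is $O(n^{-1/2})$, using minimum degree $3$. Large sets are handled because $\ell_n\gg n$ makes the combinatorial factor overwhelming.

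The delicate step is this union bound when $\tau\le2$, specifically controlling sets $S$ that contain hubs. I would split according to whether $S$ contains one of the $n^{1-\epsilon}$ largest-degree vertices. If it does, its stub-count is polynomially large, and the binomial bound beats $\binom{n}{s}$.
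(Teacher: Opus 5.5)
Your proposal is correct, and for $\tau\le 2$ it takes a genuinely different route from the paper. The paper gives no argument: it states the lemma as a consequence of \cite[Theorem 4.25]{remco} for every $\tau>1$, and does not discuss whether that theorem's hypotheses hold when the mean degree is infinite. For $\tau>2$ you coincide with the paper. For $\tau\le 2$ you replace the citation by a direct first-moment bound over vertex sets closed under the pairing, which makes the lemma self-contained exactly where the usual regularity assumption (convergence of the empirical mean degree) fails. For the same reason, drop the parenthetical ``$\tau=2$ with a logarithmic correction'': at $\tau=2$ the mean is infinite, so the citation does not apply there either, and it is your direct argument that covers that case.

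The hub split you flag as delicate is unnecessary. The probability that a fixed set of $k$ stubs is matched among itself is $f(k)=(k-1)!!\,(\ell_n-k-1)!!/(\ell_n-1)!!=\binom{\ell_n/2}{k/2}/\binom{\ell_n}{k}$. Since $f(k+2)/f(k)=(k+1)/(\ell_n-k-1)<1$ for $k<\ell_n/2-1$, for $|S|=s$ and $3s\le \ell_S\le \ell_n/2$ you may simply replace $\ell_S$ by the smallest even integer $\ge 3s$. Large stub-counts only make closure less likely.

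Combined with $f(k)\le\binom{\ell_n/2}{k/2}^{-1}$ and merely $\ell_n\ge 3n$, this gives union-bound terms of order $(e^2 s/n)^{s/2}$ for small $s$, and exponentially small terms for $s$ of order $n$. The bound holds uniformly over all degree sequences with minimum degree $3$. So, conditionally on the degrees, your argument proves the lemma for every $\tau>1$ at once, with no input on order statistics, hubs, or $\ell_n\gg n$. The paper's one-line citation is shorter, but it leaves the infinite-mean range unjustified.
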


\subsection{Contact process}

For the contact process we need first a result on the recovery time for an epidemic on a general graph, and on the sensitivity of this recovery time to the addition of a vertex.

\begin{lemma}\label{lem_prelim_moment}
	Let~$G=(V,E)$ be a finite graph and let~$H$ be a graphical construction for a contact process with rate~$\lambda$ on~$G$. For~$B \subseteq V$, let~$T^B:=\inf\left\{ t: \bm{\xi}^B_t = \varnothing\right\}$. Then, the following statements hold.
	\begin{itemize}
		\item[$(\mathrm{a})$] We have
			\begin{equation*}
				\mathbb E\left[ \exp\left\{\alpha T^V\right\}\right] < \infty
			\end{equation*}
			for any positive~$\alpha < -\log\left( 1- e^{-\lambda |E|}(1-e^{-1})^{|V|}\right)$.
		\item[$(\mathrm b)$] Given~$A \subset V$ and~$v \in V \setminus A$ and a non-negative, non-decreasing function~$f:\mathbb R \to\mathbb R$, we have
			\[
				\mathbb E\left[\left.f\big(T^{A \cup \{v\}}\big) - f\big( T^A\big)\;\right| \; T^A\right] \le \mathbb E\big[ f\big(T^{\{v\}}\big)\big].
			\]
	\end{itemize}
\end{lemma}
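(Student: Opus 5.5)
For part (a) I will use a block argument in time: split $[0,\infty)$ into unit intervals $[k,k+1)$ and lower bound, uniformly over the configuration at time $k$, the probability that the process started from $V$ (hence, by monotonicity, from any subset) is extinct by time $k+1$. A sufficient event is that on $[k,k+1)$ there are no infection marks on any of the $2|E|$ directed edges (or $|E|$ edges, depending on how marks are attached) and every vertex receives at least one recovery mark. By independence of the Poisson processes this has probability at least $e^{-\lambda|E|}(1-e^{-1})^{|V|}$ (with the convention that each undirected edge carries rate~$\lambda$ marks in total; otherwise the constant adjusts trivially). Call this $q$. Under this event every currently infected vertex recovers and nothing is re-infected, so the process is empty at time $k+1$. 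Because these events for different $k$ are independent and determined by disjoint parts of $H$, we get $\mathbb P(T^V > k) \le (1-q)^k$. Then
\[
\mathbb E\big[e^{\alpha T^V}\big] \le \sum_{k\ge 0} e^{\alpha(k+1)}\,\mathbb P(T^V>k) \le e^{\alpha}\sum_{k \ge 0} \big(e^{\alpha}(1-q)\big)^k,
\]
which is finite exactly when $\alpha < -\log(1-q)$, as claimed.

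For part (b) I will use the additivity of the graphical construction: $\bm\xi^{A\cup\{v\}}_t = \bm\xi^A_t \cup \bm\xi^{\{v\}}_t$ for all $t$, all driven by the same $H$. Hence $T^{A\cup\{v\}} = \max(T^A, T^{\{v\}})$. The key step is then a deterministic inequality: for nondecreasing nonnegative $f$, $f(\max(a,b)) - f(a) \le f(b)$, since the left side is $0$ when $b\le a$ and is $f(b)-f(a)\le f(b)$ otherwise, using $f(a)\ge 0$. Thus
\[
\mathbb E\big[f(T^{A\cup\{v\}}) - f(T^A)\,\big|\,T^A\big] \le \mathbb E\big[f(T^{\{v\}})\mathbf 1\{T^{\{v\}} > T^A\}\,\big|\,T^A\big].
\]

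The main obstacle is removing the conditioning, since $T^{\{v\}}$ and $T^A$ are not independent. I would handle this by FKG/Harris: both $T^A$ and $T^{\{v\}}$ are increasing functions of the infection marks and decreasing in the recovery marks, so they are positively correlated. The events $\{T^A \le s\}$ are decreasing, so conditionally on them, increasing functionals such as $f(T^{\{v\}})$ have smaller expectation. Conditioning on the exact value $T^A=s$ is delicate, so I would phrase it as follows. If the bound is needed only after integrating against functions of $T^A$, I would prove $\mathbb E[f(T^{\{v\}})\mid T^A \le s] \le \mathbb E f(T^{\{v\}})$ by Harris and pass to the limit. Alternatively, I would prove the stronger inequality pathwise, using the strong Markov property at time $T^A$. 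After $T^A$ the process $\bm\xi^{A\cup\{v\}}$ equals $\bm\xi^{\{v\}}$ restarted from some set $S\subseteq\{v\}$-descendants at that time, and the remaining lifetime is dominated by restarting from $V$. That would give the cruder bound $\mathbb E f(T^A + T') - f(T^A)$, which is not sufficient for general $f$. So the Harris-inequality route is the one I would commit to, with a careful treatment of conditioning via the decreasing events $\{T^A\le s\}$.
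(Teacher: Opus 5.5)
Your part (a) is the paper's argument: take the first unit interval with a recovery mark at every vertex and no birth arrows, so $T^V\le N\sim\mathrm{Geom}(q)$. Your first step in (b) is also the paper's, namely $T^{A\cup\{v\}}=\max\{T^A,T^{\{v\}}\}$ together with $f(\max\{a,b\})-f(a)\le \mathds{1}\{b>a\}f(b)$.

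The gap is the removal of the conditioning, and Harris/FKG cannot close it. In your proposed route you drop the indicator and aim at $\mathbb E[f(T^{\{v\}})\mid T^A]\le\mathbb E f(T^{\{v\}})$. A random variable bounded above by its own mean equals it a.s., so this asserts that $f(T^{\{v\}})$ is mean-independent of $T^A$, which is false in general. Positive association points the wrong way. On a single edge $\{u,v\}$ with $A=\{u\}$ and $f(x)=x$, with probability bounded below $v$ infects $u$ before any other mark occurs. From then on $\bm{\xi}^{\{v\}}\supseteq\bm{\xi}^{A}$, so $\mathbb E[T^{\{v\}}\mid T^A>s]\ge cs$ for some $c>0$.

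The Harris bound on the decreasing events $\{T^A\le s\}$ is only an average over a lower set of values of $T^A$. It does not ``pass to the limit'' to a pointwise conditional bound. The pointwise version is what the paper actually uses: in the satellite lemma, $\mathscr F_t$ reveals the exact value $\sigma_S(t)=\Phi(\mathcal B_t,t)$, which plays the role of $T^A$. If instead you keep the indicator, then $f(T^{\{v\}})\mathds{1}\{T^{\{v\}}>T^A\}$ is not monotone in the marks, so Harris does not apply at all.

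The missing idea is a disjoint-occurrence (BK-type) decoupling rather than positive correlation. On $\{T^{\{v\}}>T^A\}$, an infection path from $(v,0)$ that survives past $T^A$ never enters the space-time region occupied by $\bm{\xi}^A$. Otherwise, from the meeting point it would be a path from $A$, hence dead by $T^A$. So it uses only marks of $H$ outside that region.

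The paper makes this precise with an independent copy $H'$. The process $\bm{\zeta}$ uses the recovery marks of $H$ at, and the arrows of $H$ out of, $\bm{\xi}^{\{v\}}_t\setminus\bm{\xi}^A_t$, and the marks of $H'$ everywhere else. Then $\bm{\zeta}$ is a contact process from $\{v\}$, independent of $\bm{\xi}^A$, with $\bm{\xi}^{\{v\}}_t\setminus\bm{\xi}^A_t\subseteq\bm{\zeta}_t$. On $\{T^{\{v\}}>T^A\}$ this gives $T^{\{v\}}\le\inf\{t:\bm{\zeta}_t=\varnothing\}$. Hence $\mathds{1}\{T^{\{v\}}>T^A\}f(T^{\{v\}})\le f(\inf\{t:\bm{\zeta}_t=\varnothing\})$, and the conditional expectation of the right side given $T^A$ is exactly $\mathbb E f(T^{\{v\}})$.

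Your strong-Markov alternative fails for the reason you yourself gave: it controls the residual lifetime after $T^A$, not $T^{\{v\}}$ itself.
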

\begin{proof}
		For~$(\mathrm{a})$, let~$N$ be the smallest~$n\in \mathbb N$ for which, in the time interval~$[n-1,n]$, there is a recovery mark at every vertex of~$G$, and no birth arrows in any edge of~$G$. Then,~$T^V \le N$ and~$N \sim \mathrm{Geom}(e^{-\lambda |E|}(1-e^{-1})^{|V|})$, from which the result follows.

		For~$(\mathrm{b})$, enlarging the probability space, we let~$H'$ be another graphical construction for the contact process with rate~$\lambda$ on~$G$, independent of~$H$. It is then possible to define an auxiliary process~$(\bm{\zeta}_t)_{t \ge 0}$ with the following properties:
			\begin{itemize}
				\item[$\mathrm{(i)}$] $(\bm{\zeta}_t)_t$ is a contact process started from~$\mathds{1}_{\{v\}}$;
				\item[$\mathrm{(ii)}$] we have~$\bm{\xi}^{\{v\}}_t \setminus \bm{\xi}^A_t \leq \bm{\zeta}_t$ for all~$t$;
				\item[$\mathrm{(iii)}$] $(\bm{\zeta}_t)_t$ and~$(\bm{\xi}^A_t)_t$ are independent. 
			\end{itemize}
			We explain the definition of~$(\bm{\zeta}_t)_t$ in words, and leave the verification of these properties to the reader. We set~$\bm{\zeta}_0 = \mathds{1}_{\{v\}}$.  Letting~$\Xi_t:=\bm{\xi}^{\{v\}}_t \setminus \bm{\xi}^A_t$, the evolution of~$(\bm{\zeta}_t)_t$ obeys, at every time~$t$:
	\begin{itemize}
		\item from~$H$: the recovery marks at~$\Xi_t$, and the birth arrows that start from~$\Xi_t$;
		\item from~$H'$: the recovery marks everywhere else, and the birth arrows that start from everywhere else.
	\end{itemize}

	Now, using~$T^{A \cup \{v\}} = \max\{T^A,T^{\{v\}}\}$ and monotonicity of~$f$, we can bound
	\begin{align}\label{eq_aux_alpha}
		f\big( T^{A \cup \{v\}}\big)- f\big( T^A\big) \le \mathds{1}\{T^{\{v\}} > T^A \}\cdot f\big( T^{\{v\}} \big).
	\end{align}
	On the event~$\{T^{\{v\}} > T^A\}$, for all~$t > T^A$, we have~$\bm{\xi}^{\{v\}}_t = \bm{\xi}^{\{v\}}_t \setminus \bm{\xi}_t^A  \subseteq \bm{\zeta}_t$, where the inclusion follows from~$\mathrm{(ii)}$.  Hence,~$T^{\{v\}}\le \inf\{t: \bm{\zeta}_t = \varnothing\} $ on~$\{T^{\{v\}} > T^A\}$. Using this with~\eqref{eq_aux_alpha}, dropping the indicator function on the right-hand side and taking~$\mathbb E[\cdot \mid T^A]$, we obtain
	\begin{align*}
		\mathbb E\left[\left.f \big( T^{A \cup \{v\}}\big) - f\big( T^A \big) \;\right| \; T^A\right] &\le \mathbb E\left[\left. f\big(\inf\{t: \bm{\zeta}_t = \varnothing \}\big)  \;\right|\; T^A\right]\\
		&\stackrel{\mathrm{(iii)}}{=}\mathbb E\left[ f\big(\inf\{t: \bm{\zeta}_t = \varnothing \}\big)  \right] \stackrel{\mathrm{(i)}}{=} \mathbb E\big[f\big( T^{\{v\}} \big)\big].\;\;\;\qedhere
	\end{align*}
\end{proof}


The other thing we need to do in this section is gather some results about the contact process on star graphs. These are  known (see \cite{valesin2024contact}) but we include a sketch proof for the version presented here.

\begin{definition}
Let~$\lambda > 0$.
Let~$S$ be a star graph with~$n$ leaves and center~$o$, and let~$\xi \in \{0,1\}^S$. We say that~$S$ is:
\begin{itemize}
\item \emph{lit} in~$\xi$ if~$|\{x: \xi(x)=1\}| \ge \frac{\lambda \wedge 1}{8e} n$;
\item \emph{semi-lit} in~$\xi$ if~$|\{x: \xi(x)=1\}| \ge \frac{\lambda \wedge 1}{16e^2} n$;
\item \emph{weakly lit} in~$\xi$ if~$|\{x: \xi(x)=1\}| \ge \frac{\lambda \wedge 1}{32e^3} n$
\end{itemize}
(although these notions depend on~$\lambda$, we omit this dependence from the terminology).
\end{definition}

\begin{lemma}\label{lem_all_stars}
	There exists a constant~$c_{\mathrm{star}}>0$ such that the following holds for all~$\lambda > 0$ and all~$n$ large enough (depending on~$\lambda$). Let~$S$ be a star graph with~$n$ leaves and center~$o$. Let~$(\bm{\xi}_t)_{t \ge 0}$ be the contact process with rate~$\lambda$ on~$S$ started from a deterministic configuration~$\bm{\xi}_0$. Let~$T:=\exp\{c_{\mathrm{star}}(\lambda^2 \wedge 1) n\}$. Then, the following statements hold.
\begin{itemize}
\item[$\mathrm{(a)}$] If~$\bm{\xi}_0$ is not identically~0, then
	\[
		\mathbb P\left(S \text{ is lit in }\bm{\xi}_1 \right) > c_{\mathrm{star}}\cdot  (\lambda \wedge 1).
	\]
\item[$\mathrm{(b)}$]  If~$S$ is lit in~$\bm{\xi}_0$, then
	\[
		\mathbb P \left(S \text{ is semi-lit in } \bm{\xi}_t \text{ for all } t \le T,\; \int_t^{t+1} \bm{\xi}_s(o)\mathrm{d}s \geq \frac12 \text{ for all } t \in [0,T] \right) > 1 - T^{-1}.
	\]
\item[$\mathrm{(c)}$]  If~$S$ is semi-lit in~$\bm{\xi}_0$, then
	\[
		\mathbb P \left(S \text{ is weakly lit in } \bm{\xi}_t \text{ for all }t \le 1 \text{ and is lit in } \bm{\xi}_1 \right) > 1 - T^{-1}.
	\]
\end{itemize}
\end{lemma}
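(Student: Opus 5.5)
The plan is to treat the three parts separately. All three rest on one elementary comparison: while the centre $o$ is infected, each leaf behaves like an independent two-state chain. It is infected at rate $\lambda$ (if $o$ is infected) and recovers at rate $1$. A leaf's own recovery and infection marks are independent across leaves, so counts of lit leaves can be controlled by binomial Chernoff bounds with exponent linear in $n$. Throughout I write $\mu := \lambda\wedge 1$.

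For (a), we may assume some vertex is infected at time $0$ (if only a leaf $x$ is infected, first let $x$ transmit to $o$). The goal is the following event on $[0,1]$: $o$ becomes infected by time $1/2$ and then has no recovery mark in $[1/2,1]$, while each leaf independently receives an infection arrow from $o$ during $[1/2,1]$ after its last recovery mark, with no later recovery mark.
\begin{itemize}
\item If $x$ is the initially infected leaf, the event that it infects $o$ before time $1/2$ without recovering has probability at least $c\mu$.
\item The centre then survives on $[1/2,1]$ with probability $e^{-1/2}$.
\item Given this, each leaf is infected at time $1$ with probability at least $(1-e^{-\lambda/2})e^{-1/2}\ge \mu/(4e)$, independently over leaves.
\end{itemize}
A Chernoff bound then shows that at least $\frac{\mu}{8e}n$ leaves are infected at time $1$ with probability $1-e^{-c\mu n}$. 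The total probability is therefore at least $c_{\mathrm{star}}\mu$ for $n$ large depending on $\lambda$.

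For (c), the first step is to show that from semi-lit the centre is quickly (re)infected. With at least $\frac{\mu}{16e^2}n$ infected leaves, each survives time $1/4$ with probability $e^{-1/4}$. Hence, except with probability $e^{-cn}$, at least $\frac{\mu}{16e^3}n$ of them stay infected throughout $[0,1/4]$, which already gives weakly lit on that interval. Among these, some leaf sends an arrow to $o$ within time $1/4$ except with probability $e^{-c\lambda\mu n}\le e^{-c\mu^2 n}$.

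The next step repeats the mechanism of (a): once $o$ is infected, it stays infected on the remaining interval with probability bounded below. This gives only a constant probability, not $1-T^{-1}$. The fix I would try is to use repeated attempts: whenever $o$ recovers, it is reinfected at rate at least of order $\lambda\mu n$ by the many infected leaves. Consequently $o$ is infected for a fraction close to $1$ of $[0,1]$ except with probability $e^{-c\mu^2 n}$. During those times every leaf is infected at rate $\lambda$. A leaf's indicator of being infected at time $1$ dominates an independent chain with stationary-like probability at least $\mu/(4e)$, which gives lit at time $1$. Weakly lit on all of $[0,1]$ follows because the surviving original leaves, together with freshly infected ones, stay above the threshold by Chernoff and a union bound over a grid of mesh $1/n$. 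Making all exceptional probabilities $e^{-c\mu^2 n}$, with a small enough $c_{\mathrm{star}}$, gives failure probability below $T^{-1}$. The $\lambda^2$ in $T$ comes from needing $\lambda\cdot\mu n$ reinfection pressure.

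Part (b) follows from (c) by chaining over unit intervals. Lit implies semi-lit, and (c) returns us to lit at each integer time while staying weakly lit in between. The centre-occupation statement $\int_t^{t+1}\bm{\xi}_s(o)\,\mathrm{d}s\ge 1/2$ comes from the same reinfection estimate. Over $T$ intervals a union bound costs $T\cdot e^{-c\mu^2 n}$, which is below $T^{-1}$ if $c_{\mathrm{star}}$ is taken to be a third of the exponent constant.

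One mismatch needs fixing. Part (b) claims semi-lit for all $t\le T$, whereas (c) gives only weakly lit in between. To get the stronger property I would strengthen the interior estimate in (c) to semi-lit when the starting state is lit, since fewer leaves are lost from a lit start. I expect the main obstacle to be the reinfection argument for the centre in (c): obtaining a bound that is exponentially small in $\mu^2 n$ uniformly in $\lambda$, for both the fraction of time $o$ is infected and the resulting leaf counts.
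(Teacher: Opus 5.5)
Your proposal follows the paper's route: (a) is the same argument, and for (b)--(c) the paper also applies a Chernoff bound to the initially infected leaves with no recovery mark in $[0,1]$ (giving semi-lit, resp.\ weakly lit, throughout $[0,1]$ from a lit, resp.\ semi-lit, start), takes the lit-at-time-$1$ and centre-vacancy-at-most-$1/4$ estimates (your reinfection mechanism, with pressure of order $\lambda(\lambda\wedge1)n$) from \cite[Lemmas 2.19--2.20]{valesin2024contact}, and chains over unit intervals with a union bound, using that an arbitrary unit interval meets at most two integer intervals. The circularity you flagged (using ``many infected leaves'' to reinfect $o$ on $[1/4,1]$ while using $o$ to keep leaves infected) disappears once you note that the weakly-lit threshold is $\frac{\lambda\wedge1}{32e^3}n$, not $\frac{\lambda\wedge1}{16e^3}n$: the initially infected leaves with no recovery mark on all of $[0,1]$ then exceed this threshold by a factor $2$ in expectation, so they give weakly lit throughout $[0,1]$ directly (no $[0,1/4]$ split or time grid needed), and, since this set is determined by leaf recovery marks alone, its arrows to $o$ supply reinfection pressure independent of $o$'s own recovery marks.
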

\begin{proof}
By monotonicity, it suffices to consider $\lambda \leq 1$.
For $\mathrm{(a)}$, with probability at least $c\lambda$ the centre becomes infected by time $1/2$ and has no recovery before time $1$; a binomial bound on infections of the other leaves during $[1/2,1]$ then gives the claim.

For $\mathrm{(b)}$ and $\mathrm{(c)}$, use \cite[Lemma 2.20]{valesin2024contact} and its proof. If the star is initially lit, a Chernoff bound shows that enough initially infected leaves have no recovery during $[0,1]$ to keep the star semi-lit throughout that interval. If the star is initially semi-lit, the same argument keeps it weakly lit throughout the interval. In both cases, the lemma gives the lit threshold at time $1$, and these conclusions hold except with probability $e^{-c\lambda^2n}$. This proves $\mathrm{(c)}$. For $\mathrm{(b)}$, \cite[Lemma 2.19]{valesin2024contact} also bounds the centre's vacant time during $[0,1]$ by $1/4$, except with probability $e^{-c\lambda^2n}$. Iterating these one-step estimates through time $\lceil T\rceil+1$ and taking a union bound, after reducing $c_{\mathrm{star}}$, shows that with probability greater than $1-T^{-1}$ the star stays semi-lit through time $T$ and its centre is occupied for at least $3/4$ of every interval $[k,k+1]$, $k=0,\ldots,\lceil T\rceil$. An arbitrary unit interval overlaps at most two such intervals, giving the claimed $1/2$.
\end{proof}

%

\section{No coexistence: the case~$\tau \in (1,2)$}\label{s_no_coex}
In this section, we prove Theorem~\ref{thm_main_heavy} which forbids coexistence for any infection rates and for any initial conditions, when~$\tau \in (1,2)$.
The assumption~$\tau \in (1,2)$ is in force throughout.

\subsection{Graph conditions}

Our proof of quick extinction of the bushes will rely on key properties of the random graph~$G_n$. In order to be able to introduce these properties, we start by stating five lemmas concerning the behaviour of~$G_n$ as~$n \to \infty$. While the proof of the first lemma is taken directly from the literature, the proofs of the remaining four will require some work. We postpone these proofs to Section~\ref{s_proof_graph_12}.

\begin{lemma}\label{lem_event_Bn}
	Defining
\begin{equation}\label{eq_def_event_Bn}
	\mathcal B_n := \left\{\begin{array}{l}\text{the set of edges of $G_n$ contains} \\
		\left\{\{i, j\}: i,j \in [n], \;i \neq j,\; ij \le 4n(\log n)^{-10}\right\}
	\end{array}\right\},
\end{equation}
	we have
	\begin{equation*}
		\mathbb P(\mathcal B_n) \xrightarrow{n \to \infty} 1.
	\end{equation*}
\end{lemma}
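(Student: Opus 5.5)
I would prove Lemma~\ref{lem_event_Bn} with the vertex labels understood as ranks: relabel $[n]$ so that $D_1 \ge D_2 \ge \dots \ge D_n$, so that vertex $i$ carries the $i$-th largest stub-count. The plan has three steps: a uniform lower bound on the order statistics $D_{(i)}$, an upper bound on the total stub count $L_n:=\sum_k D_k$, and a conditional estimate on the probability that two given high-degree vertices have no edge between them, finished by a union bound over pairs. Throughout, the conditioning on $L_n$ being even is harmless, since it has probability tending to $1/2$.

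\emph{Step 1: degree and stub-count bounds.} Write $\alpha:=1/(\tau-1)>1$. The number of vertices with $D_k \ge y$ is $\mathrm{Bin}(n,\bar p(y))$, where $\bar p(y)\asymp y^{-(\tau-1)}$ by~\eqref{eq_condition_stubcount}. Take $y_i:=(n/i)^{\alpha}(\log n)^{-2\alpha}$. The event $D_{(i)}<y_i$ means this binomial count is below $i$, while its mean is of order $i(\log n)^{2}$. A Chernoff bound then gives a failure probability at most $\exp(-c\,i(\log n)^2)$, which is summable over $i\le n$. Hence, with high probability,
\[
D_{(i)} \ge (n/i)^{\alpha}(\log n)^{-2\alpha}\quad\text{for all } i \text{ with } y_i\ge 1,
\]
and in particular for all $i \le n(\log n)^{-10}$. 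Separately, $L_n \le n^{\alpha}\log n$ with high probability, by the stable-law scaling of sums of variables with tail exponent $\tau-1<1$.

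\emph{Step 2: no-edge probability.} Now fix the stub-counts on the good event of Step 1 and take $i\neq j$ with $ij\le 4n(\log n)^{-10}$, say $D_i\ge D_j$. To check whether $i$ and $j$ are adjacent, pair the stubs of $j$ one by one, stopping at the first stub of $j$ that lands on $i$, or after $\lceil D_j/2\rceil$ steps.

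Before stopping, fewer than $D_j$ stubs have been used in total. Because $D_i\ge D_j$, at least about $D_i/2$ stubs of $i$ remain free at every step. Each step therefore hits $i$ with probability at least $D_i/(2L_n)$, so
\[
\mathbb P(\{i,j\}\notin G_n)\le \Big(1-\frac{D_i}{2L_n}\Big)^{D_j/2}\le \exp\Big(-\frac{D_iD_j}{4L_n}\Big).
\]
Erasing multi-edges and loops does not remove the edge $\{i,j\}$, since $i\neq j$.

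Plugging in Step 1 gives
\[
\frac{D_iD_j}{L_n}\ge \Big(\frac{n}{ij}\Big)^{\alpha}\frac{(\log n)^{-4\alpha}}{\log n}\ge 4^{-\alpha}(\log n)^{10\alpha-4\alpha-1},
\]
and this exceeds $(\log n)^{5}$ because $\alpha>1$. The constraint $ij\le 4n(\log n)^{-10}$ forces both $i,j\le 4n(\log n)^{-10}$, so the bounds of Step 1 apply to them; if needed I would shrink the polylog loss in Step 1 to cover the factor $4$. The number of such pairs is at most $n^2$, so the union bound gives
\[
n^2\exp\big(-(\log n)^5/4\big)\to 0.
\]

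\emph{Main obstacle.} The delicate point is Step 2 for the very top vertices. There $D_{(1)}$ is of the same order as $L_n$, so the sequential pairing is far from the ``independent edges with probability $D_iD_j/L_n$'' heuristic. Pairing from the smaller-degree endpoint, and keeping only half of the larger endpoint's stubs as guaranteed targets, is what makes the geometric bound above valid.
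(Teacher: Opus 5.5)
Your proposal is correct and is essentially the paper's proof. Both arguments take a lower bound on the ordered stub-counts and an upper bound on $L_n$ (the paper reads these off the event $\mathcal A_n$ of Lemma~\ref{lem_event_An} instead of re-deriving them), then pair one endpoint's stubs one at a time to get a bound $\exp\{-D_{(i)}D_{(j)}/(cL_n)\}$, and finish with a union bound over pairs. The only cosmetic difference is that the paper pairs from $i$ with no ordering assumption, since no stub of the target vertex is consumed before the exploration stops, so the ``main obstacle'' you flag does not actually arise.
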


\begin{lemma}\label{lem_bounds_degrees}
There exists~$\mathsf{k} > 0$ such that, with high probability as~$n \to \infty$, we have
	\[
		\max\left\{1,\; {(\log n)^{-10}}\cdot \frac{n}{i}\right\} \le \deg(i) \le (\log n)^{\mathsf{k}}\cdot \frac{n}{i} \quad \text{for all } i \in [n].
	\]
\end{lemma}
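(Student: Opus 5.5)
The statement presumes that the vertices are labelled in decreasing order of stub-count, so that~$D_i$ is the~$i$-th largest of~$D_1,\dots,D_n$; I will use this convention throughout. The lower bound is a direct consequence of Lemma~\ref{lem_event_Bn} and Lemma~\ref{lem_connectedness}. The upper bound needs a first-moment computation over the configuration pairing, followed by concentration and a union bound over~$i$. Write~$a:=1/(\tau-1)>1$ and~$L_n:=\sum_j D_j$.

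\emph{Lower bound.} Put~$x_i:=(\log n)^{-10} n/i$. On~$\mathcal B_n$, vertex~$i$ is adjacent to every~$j\neq i$ with~$j\le 4x_i$, so~$\deg(i)\ge \lfloor 4x_i\rfloor-1\ge 4x_i-2$. If~$x_i\ge 1$ this is at least~$x_i$, which equals~$\max\{1,x_i\}$. If~$x_i<1$, the required bound is just~$\deg(i)\ge 1$, which follows from connectedness (Lemma~\ref{lem_connectedness}). Both events hold with high probability, so the lower bound holds simultaneously for all~$i$.

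\emph{Order statistics.} From the tail~$p(k)\sim\beta k^{-\tau}$ I will first establish, with high probability and simultaneously for all~$i$, that
\[
D_i\in\bigl[(\log n)^{-1}(n/i)^a,\ (\log n)(n/i)^a\bigr].
\]
This follows by comparing the number of~$D_j$ exceeding~$t$ with its binomial mean~$\asymp n t^{1-\tau}$ and applying Chernoff bounds on a grid of values of~$t$. As consequences:
\begin{itemize}
\item for every~$m$,~$\sum_{j>m} D_j\le (\log n)^{C} n^{a} m^{1-a}$, using~$a>1$;
\item $L_n-2D_1\ge (\log n)^{-C}n^a$ with high probability, because~$D_2+D_3+\dots$ is already of order~$n^a$ (up to logarithms) even though~$D_1$ alone is of the same order as~$L_n$.
\end{itemize}

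\emph{Upper bound.} Condition on the stub-counts and reveal the~$D_i$ stubs of vertex~$i$ sequentially. Each stub is matched to a stub of a vertex in a set~$J$ with conditional probability at most~$\sum_{j\in J}D_j/(L_n-2D_i)$. Split the neighbours of~$i$ according to a threshold~$m:=\lceil n/i\rceil$:
\begin{itemize}
\item vertices~$j\le m$ contribute at most~$m$ distinct neighbours, trivially;
\item neighbours~$j>m$ number at most the count of stubs of~$i$ landing in~$\{j>m\}$.
\end{itemize}
The second count is stochastically dominated by a Binomial with~$D_i$ trials and success probability~$(\log n)^{C}n^a m^{1-a}/n^a$. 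Its mean is
\[
\le (\log n)^{C'}(n/i)^a (n/i)^{1-a}=(\log n)^{C'} n/i .
\]
A Chernoff bound then gives a failure probability of at most~$\exp(-(\log n)^{\mathsf k})$ for the event that this count exceeds~$(\log n)^{\mathsf k} n/i$, for a suitable~$\mathsf k$. This is valid even when~$n/i$ is of order~$1$, because the threshold is then a power of~$\log n$ far above the mean. A union bound over the~$n$ choices of~$i$, together with the high-probability order-statistic event, finishes the proof.

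The main obstacle is the last step: the control of~$L_n-2D_i$ for the very largest vertices, where~$D_i$ is of the same order as~$L_n$, together with making the binomial domination uniform in~$i$. I would first try the sequential-reveal domination above. If it is too crude for the smallest indices~$i$, I would treat~$i\le(\log n)^{C}$ separately, where the bound~$\deg(i)\le n\le(\log n)^{\mathsf k} n/i$ is trivial.
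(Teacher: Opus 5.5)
Your argument is essentially the paper's. The lower bound comes from $\mathcal B_n$ plus connectedness. The upper bound splits the neighbours of $i$ at an index threshold of order $n/i$, dominates the stubs sent beyond the threshold by a binomial via sequential pairing with denominator $L_n-2D_i$, and finishes with a Chernoff bound and a union bound. Your threshold $\lceil n/i\rceil$ even spares you the paper's separate case $i>2n/(3(\log n)^{10})$.

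However, your auxiliary claim that $L_n-2D_1\ge(\log n)^{-C}n^a$ with high probability is false, not merely crude. For $\tau\in(1,2)$ the rescaled order statistics converge to the points $c\,\Gamma_k^{-a}$ of a Poisson process. There the top point exceeds the sum of all the others with positive probability, for instance when $\Gamma_1$ is small. So $D_1/L_n$ has a non-degenerate limit law, and $\mathbb P(L_n-2D_1<0)$ stays bounded away from $0$. That $D_2+D_3+\cdots$ is of order $n^a$ does not help, since $D_1$ can still exceed it.

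Hence your fallback is mandatory. The top few vertices must be handled by the trivial bound $\deg(i)\le n$. For the remaining $i$, the denominator must be justified without reference to $L_n-2D_1$. The paper takes $i\ge 3$ and uses
\[
L_n-2D_i\ge L_n-D_{i-1}-D_i\ge D_1.
\]
Alternatively, for $i>(\log n)^3$ your own order-statistic bounds give $2D_i\le 2(\log n)^{1-3a}n^a\le D_1/2$, since $a>1$ and $D_1\ge(\log n)^{-1}n^a$. This yields $L_n-2D_i\ge D_1/2$.

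With that repair the rest of your computation (the mean $\le(\log n)^{C'}n/i$ and the Chernoff and union bounds) goes through as written.
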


\begin{lemma}\label{lem_degrees_into_sats}
	There exists~$\mathsf{k}' > 0$ such that for any~$\epsilon \in (0,1)$, with high probability as~$n \to \infty$, we have
	\[
		|\{j: n^{1-\epsilon} \le j \le n,\; i \sim j \}| \le \log n+ (\log n)^{\mathsf{k}'} \cdot \left(\frac{n^\epsilon}{i}\right)^{\frac{2-\tau}{\tau-1}} \cdot \frac{n}{i}\quad \forall i \in [n],\; i \ge n^\epsilon.
	\]
\end{lemma}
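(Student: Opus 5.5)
We will bound the number of distinct satellite neighbours of~$i$ by the number of stubs of~$i$ that are paired with stubs belonging to satellites. We then control this quantity conditionally on the stub-counts, using a binomial domination and a Chernoff bound. Here, as in the description of the core and satellites, we take the vertices to be labelled in decreasing order of stub-count, so that~$D_i=D_{(i)}$ is the~$i$-th largest.

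\emph{Step 1 (degree sequence estimates).} We first establish, with high probability, three facts about the i.i.d.\ stub-counts.
\begin{itemize}
\item[(i)] For all~$j\in[n]$, $D_{(j)}\le (\log n)^{a}(n/j)^{1/(\tau-1)}$. We get this from the fact that~$|\{v:D_v\ge x\}|\sim\mathrm{Bin}(n,\Theta(x^{1-\tau}))$, together with Chernoff and a union bound over dyadic~$x$.
\item[(ii)] The total stub count satisfies $L_n:=\sum_v D_v\ge n^{1/(\tau-1)}/\log n$. This follows already from the maximum.
\item[(iii)] The satellite stub mass $S_n:=\sum_{j\ge n^{1-\epsilon}}D_{(j)}$ satisfies $S_n\le(\log n)^{b}\, n\cdot n^{\epsilon(2-\tau)/(\tau-1)}$.
\end{itemize}
For~(iii), on the event in~(i) every satellite has $D\le x_\epsilon:=(\log n)^a n^{\epsilon/(\tau-1)}$. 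Hence $S_n\le\sum_v D_v\mathds 1\{D_v\le x_\epsilon\}$, whose mean is $n\cdot O(x_\epsilon^{2-\tau})$, and Markov's inequality (with one extra $\log n$) finishes. The exponents check out: $n^{1/(\tau-1)}\,(n^{1-\epsilon})^{-(2-\tau)/(\tau-1)}=n^{1+\epsilon(2-\tau)/(\tau-1)}$.

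\emph{Step 2 (conditional binomial domination).} Condition on the stub-counts and on the events of Step~1. Reveal the partners of the~$D_i$ stubs of~$i$ one at a time. For~$i\ge n^\epsilon$, (i) and (ii) give $D_i=o(L_n)$. So at each step the probability of pairing with a satellite stub is at most $S_n/(L_n-2D_i)\le 2S_n/L_n$, uniformly in the history. Therefore the number~$X_i$ of satellite neighbours of~$i$ is stochastically dominated by $\mathrm{Bin}(D_i,2S_n/L_n)$, which has mean
\[
\mu_i\le 2(\log n)^{a+b+1}\Big(\frac ni\Big)^{\frac1{\tau-1}} n^{1+\frac{\epsilon(2-\tau)}{\tau-1}}\, n^{-\frac1{\tau-1}}
=(\log n)^{O(1)}\Big(\frac{n^\epsilon}{i}\Big)^{\frac{2-\tau}{\tau-1}}\frac ni ,
\]
where the last equality uses $-\tfrac1{\tau-1}=-\tfrac{2-\tau}{\tau-1}-1$. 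This is exactly the shape of the claimed bound.

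\emph{Step 3 (Chernoff and union bound).} We use $\mathbb P(\mathrm{Bin}\ge x)\le(e\mu/x)^x$ and split into two cases.
\begin{itemize}
\item If $\mu_i\le e^{-3}\log n$, take $x=\log n$; this gives probability at most $n^{-2}$.
\item Otherwise, take $x=e^3\mu_i\ge\log n$; this gives probability at most $e^{-2x}\le n^{-2}$.
\end{itemize}
In both cases $X_i\le\log n+e^3\mu_i$ except with probability $n^{-2}$. A union bound over $i\in[n]$ finishes the argument, with $\mathsf k'$ absorbing the polylogarithmic factors. The constant~$\mathsf k'$ does not depend on~$\epsilon$, because the exponents~$a$ and~$b$ do not.

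The main obstacle I expect is Step~1, especially making~(i) hold uniformly in~$j$ with a polylogarithmic loss that does not depend on~$\epsilon$, and obtaining the satellite-mass bound~(iii) when $\tau<2$. In that regime the sum is dominated by its largest terms, so the truncation at~$x_\epsilon$ coming from~(i) is essential. Erasing multi-edges only decreases~$X_i$, so it causes no difficulty.
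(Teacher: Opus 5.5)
Your proposal is correct and follows essentially the same route as the paper. Both condition on a high-probability event for the stub-counts, dominate the satellite neighbours of $i$ by $\mathrm{Bin}\bigl(D_{(i)}, L_n(j^*)/(L_n-2D_{(i)})\bigr)$ via one-by-one stub matching, bound the satellite stub mass by $(\log n)^{O(1)} n^{1+\epsilon\frac{2-\tau}{\tau-1}}$, and finish with $\mathbb P(\mathrm{Bin}\ge x)\le (eNq/x)^x$ and a union bound over $i$. The differences are cosmetic: the paper bounds the satellite mass by summing the order-statistic bounds of $\mathcal A_n$ rather than by your truncation-plus-Markov argument, and it uses the single threshold $x_i=\log n\,(1+\mathbb E[Z_i'])$ instead of your two-case split.
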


\begin{lemma}\label{lem_sizes_satellites}
	For~$\epsilon >0$ small enough, there exists~$m \in \mathbb N$ such that, with high probability as~$n \to \infty$,  in the subgraph of~$G_n$ induced by the set of vertices~$\{i\in [n]: i>n^{1-\epsilon}\}$, every connected component has at most~$m$ vertices.
\end{lemma}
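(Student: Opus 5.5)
We plan to prove Lemma~\ref{lem_sizes_satellites} by a first-moment count of trees in the subgraph induced by the satellites. Vertices are labelled in decreasing order of stub-count, so the satellites $i>n^{1-\epsilon}$ are those whose stub-count is below the $\lfloor n^{1-\epsilon}\rfloor$-th largest. Since $p(k)\sim\beta k^{-\tau}$, the number of vertices with $D\ge d$ is about $n d^{1-\tau}$. Hence, with high probability, every satellite has stub-count at most $d_\epsilon:=n^{\epsilon/(\tau-1)+o(1)}$. Meanwhile, the total number of stubs is $L_n\ge n^{1/(\tau-1)-o(1)}$ with high probability.

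The key bound is on the probability that a given satellite stub is matched to another satellite. We condition on the degree sequence and reveal the matching sequentially. The satellites carry in total at most $S_\epsilon:=\sum_{i>n^{1-\epsilon}}D_i$ stubs. By a truncated-moment computation, $\mathbb E[D\mathds 1\{D\le d\}]\asymp d^{2-\tau}$, so $S_\epsilon\le n\, d_\epsilon^{2-\tau} n^{o(1)} = n^{1+\epsilon(2-\tau)/(\tau-1)+o(1)}$. A satellite stub therefore pairs with a satellite stub with probability at most about $q:=S_\epsilon/L_n = n^{1-(1-\epsilon(2-\tau))/(\tau-1)+o(1)}$. The exponent $1-\frac{1}{\tau-1}+\frac{\epsilon(2-\tau)}{\tau-1}$ is negative, since $\frac1{\tau-1}>1$, once $\epsilon$ is small. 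So $q\le n^{-\delta}$ for some $\delta=\delta(\tau,\epsilon)>0$.

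Next we count connected sets. A connected induced component with at least $m+1$ vertices contains a tree on $m+1$ satellite vertices. We bound the expected number of such trees, where each tree edge is realised by a pair of stubs. For a fixed shape rooted at a vertex, we explore $m$ edges. The number of choices of the starting vertex and stub is at most $S_\epsilon$, and each subsequent edge is matched into a satellite with conditional probability at most $2q$. Counting choices of the stub used at each already-found vertex gives at most $(m\,d_\epsilon)^{m}$ further factors. This yields a crude bound of $S_\epsilon\,(C m d_\epsilon q)^{m}$.

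The main obstacle is that $d_\epsilon q$ must be small. Its exponent is roughly $\frac{\epsilon}{\tau-1}+1-\frac{1}{\tau-1}+\frac{\epsilon(2-\tau)}{\tau-1}$, which is negative for small $\epsilon$. So each step gains a factor $n^{-\delta'}$, and choosing $m>(1+\epsilon/(\tau-1))/\delta'$ defeats the prefactor $S_\epsilon\le n^{1+O(\epsilon)}$. The resulting expectation is $o(1)$, and Markov's inequality finishes the argument.

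Care is needed on two points. First, conditioning on the high-probability degree events must be done before the matching. Second, the sequential-pairing bound must hold uniformly after $O(m)$ revealed stubs; this is fine since $m$ is constant and $L_n$ is polynomially large. Erasure only removes edges, so it can only shrink components and needs no separate treatment.
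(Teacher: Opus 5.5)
Your proposal is correct and is essentially the paper's argument. The paper likewise conditions on the stub-count event and uses that every satellite has at most $M\approx n^{\epsilon/(\tau-1)}$ stubs. It also bounds by $q\approx L_n(j^*)/L_n\approx n^{-(1-\epsilon)\frac{2-\tau}{\tau-1}}$ the probability that a stub is matched into the satellites, so $Mq$ is polynomially small for small $\epsilon$, and then takes $m$ large to beat a union bound over roots.

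The only difference is packaging. The paper explores each vertex's component and dominates it by a Galton--Watson tree with $\mathrm{Bin}(M,q)$ offspring, getting $(eMq)^m$ per vertex. Your first-moment count of embedded trees gives the same $n^{1+O(\epsilon)}(CMq)^m$ bound. Since it touches only $O(m)$ stubs, it also sidesteps the paper's requirement that $mM=o(D_{(1)})$.
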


We now introduce a notion of ``good graphs'', which are essentially graphs that satisfy the properties guaranteed by the five lemmas above. With this definition at hand, we will be able to switch to deterministic (and good) graphs when studying the grass-bushes-trees process, thus eliminating the need to handle the graph randomness.

The constants~$\mathsf{k}$ and~$\mathsf{k}'$ that appear in the definition below are those from Lemma~\ref{lem_bounds_degrees} and Lemma~\ref{lem_degrees_into_sats}, respectively.

\begin{definition}
Let~$\epsilon > 0$ and~$m \in \mathbb N$.
We say that a graph~$G$ with vertex set~$V=\{1,\ldots, n\}$ is~\emph{$(\epsilon,m)$-good} if the following conditions hold. First,~$G$ is connected. Second,
\begin{equation}\label{eq_guaranteed_edges}
	\text{every two distinct vertices~$i,j$ with~$ij \le \frac{4n}{(\log n)^{10}}$ are neighbours}.
\end{equation}
Third,
	\begin{equation}\label{eq_bounds_degrees}
		\max\left\{1,\; {(\log n)^{-10}}\cdot \frac{n}{i}\right\} \le \deg(i) \le (\log n)^{\mathsf{k}}\cdot \frac{n}{i} \quad \text{for all } i \in [n].
\end{equation}
Fourth,
	\begin{equation}\label{eq_neighbours_in_sat}
		|\{j: n^{1-\epsilon} \le j \le n,\; i \sim j \}| \le \log n+ (\log n)^{\mathsf{k}'} \cdot \left(\frac{n^\epsilon}{i}\right)^{\frac{2-\tau}{\tau-1}} \cdot \frac{n}{i}\quad \forall i \in [n],\; i \ge n^\epsilon.
\end{equation}
Fifth,
\begin{equation}\label{eq_small_satellites}
\begin{array}{c}
\text{the subgraph induced by $\{i: i > n^{1-\epsilon}\}$ has connected}\\
\text{components with at most $m$ vertices.}
\end{array}
\end{equation}
We call the set of vertices~$\{i: i \le n^{1-\epsilon}\}$ the \emph{core}, and each connected component of the subgraph induced by~$\{i: i > n^{1-\epsilon}\}$ is called a \emph{satellite}.
\end{definition}

The following is a direct consequence of Lemmas~\ref{lem_connectedness},~\ref{lem_event_Bn},~\ref{lem_bounds_degrees},~\ref{lem_degrees_into_sats}, and~\ref{lem_sizes_satellites}.
\begin{proposition}\label{prop_graph_is_good}
	There exist~$\epsilon \in (0,1)$ (which can be taken as small as desired) and~$m \in \mathbb N$ (depending on~$\epsilon$) such that
\[
\mathbb P(G_n \text{ is $(\epsilon,m)$-good}) \xrightarrow{n \to \infty} 1.
\]
\end{proposition}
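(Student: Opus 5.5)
The plan is to check that each of the five defining conditions of an $(\epsilon,m)$-good graph is supplied by one of the five preceding lemmas, and then to combine them with a union bound. One bookkeeping point comes first: the definition indexes vertices so that vertex $i$ is the one with the $i$-th largest stub-count. I will adopt this relabelling of $[n]$ throughout, which is implicit in Lemmas~\ref{lem_event_Bn}--\ref{lem_sizes_satellites}. Since relabelling does not change the isomorphism class of $G_n$, connectedness from Lemma~\ref{lem_connectedness} is unaffected.

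Next I fix the parameters. Lemma~\ref{lem_sizes_satellites} gives some $\epsilon_0>0$ such that, for every $\epsilon\in(0,\epsilon_0)$, there is an $m=m(\epsilon)\in\mathbb N$ for which, with high probability, all components induced by $\{i: i>n^{1-\epsilon}\}$ have at most $m$ vertices. This is condition~\eqref{eq_small_satellites}, and it shows that $\epsilon$ may be taken as small as desired. For this $\epsilon$, Lemma~\ref{lem_degrees_into_sats} holds because it is valid for every $\epsilon\in(0,1)$, and it gives~\eqref{eq_neighbours_in_sat} with high probability. The remaining conditions do not depend on $\epsilon$ or $m$: Lemma~\ref{lem_event_Bn} gives exactly~\eqref{eq_guaranteed_edges} on the event $\mathcal B_n$, Lemma~\ref{lem_bounds_degrees} gives~\eqref{eq_bounds_degrees}, and Lemma~\ref{lem_connectedness} gives connectedness.

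Finally, the union bound shows that the probability of the complement of goodness is at most the sum of five probabilities, each tending to $0$. Hence $\mathbb P(G_n \text{ is } (\epsilon,m)\text{-good})\to 1$.

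There is no substantial obstacle here. The only care needed is the order of quantifiers: $\epsilon$ must be chosen small enough for Lemma~\ref{lem_sizes_satellites}, and $m$ must then be taken to depend on that $\epsilon$. The constants $\mathsf k,\mathsf k'$ are fixed by the lemmas beforehand, and the definition uses those same constants.
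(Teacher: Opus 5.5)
Your proposal is correct and takes the same approach as the paper, which states the proposition as a direct consequence of Lemmas~\ref{lem_connectedness}, \ref{lem_event_Bn}, \ref{lem_bounds_degrees}, \ref{lem_degrees_into_sats} and~\ref{lem_sizes_satellites}, combined by a union bound. Your remarks on labelling vertices by decreasing stub-count, and on fixing $\epsilon$ first for Lemma~\ref{lem_sizes_satellites} before choosing $m=m(\epsilon)$, make explicit the bookkeeping the paper leaves implicit.
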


\begin{lemma}\label{lem_core_gbt}
	For any~$\epsilon \in (0,1)$,~$m \in \mathbb N$ and~$\lambda > 0$, there exists~$\delta_0 > 0$ such that the following holds. Let~$G$ be an~$(\epsilon,m)$-good graph with~$n$ vertices. Letting~$(\bm{\xi}_t)_{t \ge 0}$ be the contact process on~$G$ with parameter~$\lambda$ started from an arbitrary non-empty configuration, 
	\begin{itemize}
		\item[$\mathrm{(i)}$] for any~$i$ in the core, the probability that~$i$ is lit at time~$2$ is larger than~$\delta_0$;
		\item[$\mathrm{(ii)}$] for any~$i$ in the core and~$t > 0$, the probability that the process has not gone extinct by time~$t$, but~$i$ has not become lit until then, is smaller than~$(1-\delta_0)^{\lfloor t/2\rfloor}$.
	\end{itemize}
\end{lemma}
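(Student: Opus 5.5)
We prove~$\mathrm{(i)}$ by pushing the infection along a short path to~$i$ and then applying Lemma~\ref{lem_all_stars}(a) to the star around~$i$. Statement~$\mathrm{(ii)}$ then follows by iterating~$\mathrm{(i)}$ with the Markov property. Throughout, ``$i$ is lit'' means that the star formed by~$i$ and its neighbours is lit in~$\bm{\xi}_t$, i.e.\ at least~$\frac{\lambda\wedge 1}{8e}\deg(i)$ neighbours of~$i$ are infected.

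\emph{Step 1 (short paths).} Let~$G$ be~$(\epsilon,m)$-good. For any core vertex~$j$ we have~$1\cdot j \le n^{1-\epsilon} \le 4n(\log n)^{-10}$ for~$n$ large, so by~\eqref{eq_guaranteed_edges} vertex~$1$ is adjacent to every other core vertex. Now take an arbitrary vertex~$x$. If~$x$ is in a satellite, that satellite has at most~$m$ vertices by~\eqref{eq_small_satellites}. Since~$G$ is connected, the satellite has a neighbour in the core, so~$x$ is within graph distance~$m$ of some core vertex. Routing through vertex~$1$, every~$x$ is therefore joined to any core vertex~$i$ by a path of length at most~$L:=m+2$. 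This holds as a subgraph path; the degenerate cases~$x=i$ or~$x=1$ only shorten it.

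\emph{Step 2 (reaching~$i$ by time~$1$).} Fix an infected vertex~$x$ and such a path~$x=x_0,x_1,\dots,x_\ell=i$ with~$\ell \le L$. Split~$[0,1]$ into~$\ell$ intervals of length~$1/\ell$. Consider the event that, in the~$k$-th interval, there is an arrow~$x_{k-1}\to x_k$, and that there are no recovery marks on the path during~$[0,1]$. On this event~$i$ is infected at time~$1$. Its probability is at least
\[
\bigl(1-e^{-\lambda/L}\bigr)^{L} e^{-L} =: p_0(\lambda,m) > 0,
\]
which is uniform in~$n$ and in the initial configuration. This is the step where the bounded satellite size enters. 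The only care needed is that the constant must not depend on~$n$, and that is guaranteed by the bound~$\ell \le m+2$.

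\emph{Step 3 (lighting the star).} By~\eqref{eq_bounds_degrees}, a core vertex~$i$ has~$\deg(i) \ge (\log n)^{-10} n/i \ge (\log n)^{-10}n^{\epsilon}$, which tends to infinity. Hence Lemma~\ref{lem_all_stars} applies to the star~$S_i$ formed by~$i$ and its neighbours, for~$n$ large depending only on~$\lambda,\epsilon$. Restricting the graphical construction to~$S_i$ gives a contact process on~$S_i$ that is dominated by~$(\bm{\xi}_t)$, by monotonicity in the graph. Apply the Markov property at time~$1$ to this restricted process started from a configuration in which~$i$ is infected. Lemma~\ref{lem_all_stars}(a) then gives that~$S_i$ is lit at time~$2$ with probability greater than~$c_{\mathrm{star}}(\lambda\wedge1)$. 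Setting~$\delta_0 := p_0\, c_{\mathrm{star}}(\lambda\wedge 1)$ proves~$\mathrm{(i)}$. Finitely many small~$n$ can be absorbed by shrinking~$\delta_0$, or the statement can simply be read for large~$n$.

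\emph{Step 4 (proof of~$\mathrm{(ii)}$).} Consider the times~$0,2,4,\dots,2(\lfloor t/2\rfloor-1)$. On the event that the process is alive at time~$2k$ and~$i$ has not yet been lit, apply the Markov property at time~$2k$ together with~$\mathrm{(i)}$. The conditional probability that~$i$ fails to be lit at time~$2k+2$ is at most~$1-\delta_0$. Multiplying these~$\lfloor t/2\rfloor$ conditional bounds gives~$(1-\delta_0)^{\lfloor t/2\rfloor}$.
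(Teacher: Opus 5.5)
Your proof is correct and follows essentially the same route as the paper: bounded distance from any vertex to a core vertex (via vertex~$1$ and the satellite-size bound), an explicit arrows-along-a-path event that infects~$i$ by time~$1$, Lemma~\ref{lem_all_stars}$(\mathrm{a})$ on the star of~$i$ from time~$1$ to time~$2$, and repeated trials for~$(\mathrm{ii})$. You are in fact more explicit than the paper about the restriction/monotonicity argument for the star and about needing~$\deg(i)\to\infty$. One trivial nit: a path with~$\ell$ edges has~$\ell+1$ vertices, so the no-recovery factor should be~$e^{-(L+1)}$ rather than~$e^{-L}$, which changes nothing.
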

\begin{proof}
	It follows from the definition of an~$(\epsilon,m)$-good graph that all vertices in the core are neighbours of the vertex with index~$1$, and satellites have at most~$m$ vertices. This implies that the diameter of~$G$ is at most~$2m+4$. Hence, for a fixed vertex~$i$ in the core, starting from an arbitrary non-empty configuration, we can take~$\delta_1 > 0$ depending on~$m$ and~$\lambda$ so that with probability at least~$\delta_1$, the infection reaches~$i$ by time~$1$ and there is no recovery at~$i$ during~$[0,1]$. Thus~$i$ is infected at time~$1$, and the Markov property and Lemma~\ref{lem_all_stars}$(\mathrm{a})$ give conditional probability at least~$c_{\mathrm{star}}\cdot (\lambda \wedge 1)$ that~$i$ is lit at time~$2$. Hence,~$\mathrm{(i)}$ holds with~$\delta_0:=\delta_1 \cdot c_{\mathrm{star}}\cdot (\lambda \wedge 1)$.

The second statement then follows from the first, using a standard argument involving repeated trials.
\end{proof}

\begin{proposition}\label{prop_bushes_die_in_good}
For~$\epsilon \in (0,1/2)$ small enough,~$m \in \mathbb N$,~$\lambda_\mathcal{T} > 0$ and~$\lambda_{\mathcal{B}} > 0$, there exists~$C_0 > 0$ such that the following holds for~$n$ large enough. Let~$G$ be an~$(\epsilon,m)$-good graph with~$n$ vertices, and consider the grass-bushes-trees process on~$G$ with rates~$\lambda_{\mathcal T}$ and~$\lambda_{\mathcal B}$. Assume that for the tree configuration at time~$0$, all vertices in the core are semi-lit.
	Then,
\[
\mathbb P(\text{there are no bushes at time } C_0\log n) > 1 - \frac{1}{2n}.
\]
\end{proposition}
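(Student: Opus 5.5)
The plan is to follow the strategy sketched in the introduction: stochastically lower bound the trees by a ``bonsai process'' living only on the core, and then dominate the bushes by a weighted supermartingale that decays geometrically.

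\textbf{Step 1: trees stay semi-lit on the core.} Each core vertex $i \le n^{1-\epsilon}$ has degree at least $(\log n)^{-10} n/i \ge n^{\epsilon}(\log n)^{-10}$ by \eqref{eq_bounds_degrees}, so its star is large. Since the tree process is an autonomous contact process, Lemma~\ref{lem_all_stars}$(\mathrm{c})$ applies to every core vertex. A union bound over the at most $n^{1-\epsilon}$ core vertices and over $O(\log n)$ unit time steps shows that, with probability $1-o(n^{-1})$, every core vertex stays semi-lit (hence weakly lit) throughout $[0, C_0\log n]$. The same union bound via Lemma~\ref{lem_all_stars}$(\mathrm{b})$ shows that every core vertex $i$ is a tree for at least half of every unit interval. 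On this event each core vertex has at least $c\,n^{\epsilon}(\log n)^{-10}$ tree neighbours at all times. We then couple the trees from below with independent two-state chains (bonsais) on the core: vertex $i$ becomes a bonsai at rate $\lambda_{\mathcal T}\cdot c\,n^{\epsilon}(\log n)^{-10}$ and loses it at rate $1$. The bushes are dominated by a contact process of rate $\lambda_{\mathcal B}$ killed at bonsai sites. In particular, a bush at a core vertex is removed at rate at least $n^{\epsilon/2}$, so core vertices hold bushes only a vanishing fraction of the time.

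\textbf{Step 2: weighted supermartingale.} Define $M_t = \sum_{v \text{ core}} w_{\mathrm{c}}\mathds 1\{v\in\mathcal B_t\} + \sum_{S \text{ satellite}} W_S(t)$. The satellite weight $W_S(t)$ should be computed by looking ahead in the graphical construction, as the paper suggests. Concretely, I would take $W_S(t)=\mathbb E[f(T^{\mathcal B_t\cap S}_S)\mid\cdot]$-type quantities, where $T_S$ is the extinction time of the bush contact process restricted to $S$ (at most $m$ vertices). This extinction time has exponential moments by Lemma~\ref{lem_prelim_moment}$(\mathrm a)$, uniformly in the satellite since it has bounded size. Lemma~\ref{lem_prelim_moment}$(\mathrm b)$ controls the weight increase when a core vertex pushes a new bush into $S$: the increment is at most $\mathbb E[f(T^{\{v\}})]=O(1)$.

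We then need to check the drift $\frac{d}{dt}\mathbb E[M_t]\le -\kappa M_t$:
\begin{itemize}
\item \emph{Satellite weights} decrease at rate $\asymp$ their size, by the look-ahead construction.
\item \emph{Satellites push} bushes into the core at total rate at most $\lambda_{\mathcal B} m\cdot|\text{edges}|$, and these are killed quickly.
\item \emph{A core bush} at $i$ creates new satellite infections at rate at most $\lambda_{\mathcal B}$ times \eqref{eq_neighbours_in_sat}. Because it survives only time about $n^{-\epsilon}\cdot(\text{bonsai rate})^{-1}$, this expected output is $o(1)$ for $i\ge n^\epsilon$.
\item \emph{For $i<n^{\epsilon}$}, the degree bound \eqref{eq_bounds_degrees} together with the bonsai rate $\asymp n^{1-\epsilon'}/i$ gives output of order $(\log n)^{O(1)}$. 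I would handle this by choosing $w_{\mathrm c}$ to depend on $i$, or by using that these hubs are trees for most of the time.
\end{itemize}
Then $\mathbb E[M_{C_0\log n}]\le M_0 e^{-\kappa C_0\log n}\le n\cdot O(1)\cdot n^{-\kappa C_0}<1/(4n)$, and Markov's inequality finishes the proof.

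\textbf{Main obstacle.} The main obstacle is the drift computation at core vertices: balancing the enormous number of satellite neighbours of a core vertex against the tree-removal rate, uniformly over all $i$ and for arbitrary $\lambda_{\mathcal B}$. The exponent in \eqref{eq_neighbours_in_sat} must beat the bonsai rate $n^{\epsilon}/i$-type bounds for $\epsilon$ small. I would first try weights $w_i=(n/i)^{\theta}$ and tune $\theta$ and $\epsilon$.
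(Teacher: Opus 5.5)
Your overall architecture matches the paper's. You use bonsais dominated by the trees while every core star stays weakly lit (Lemma~\ref{lem_all_stars}(c) on $[0,1]$, then (b)), bushes killed by bonsais, and a potential with look-ahead satellite weights whose increments are controlled by Lemma~\ref{lem_prelim_moment}(b). The gap is at the step you flag as the main obstacle, and it cannot be closed by tuning $\theta$ and $\epsilon$. The real difficulty is not the offspring of a core bush but \emph{invasions of hubs by satellites} when $\lambda_{\mathcal B}$ is large.

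In a typical good graph, vertex~$1$ has $\deg(1)\le n$ and $\Theta(n)$ satellite neighbours. Let $W$ be the weight of a satellite holding one bush. If only vertex~$1$ carries a bush, negative drift forces $w_1\Lambda_{\mathcal T}^2\deg(1)/(32e^3)\gtrsim\lambda_{\mathcal B}W\cdot\Theta(n)$, i.e.\ $w_1\gtrsim\lambda_{\mathcal B}W/\Lambda_{\mathcal T}^2$. If instead a single satellite bush neighbours vertex~$1$ and vertex~$1$ is bonsai-free, the invasion term is $\lambda_{\mathcal B}w_1$. The satellite weight can only decay at rate $O(W)$, since its bush dies at rate~$1$, so this forces $W\gtrsim\lambda_{\mathcal B}w_1$. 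The two requirements are incompatible once $\lambda_{\mathcal B}\gg\Lambda_{\mathcal T}$. Saying the invading bushes are ``killed quickly'' does not help, because the potential pays $w_1$ at the instant of invasion. Nor does the fact that a hub is a tree for half of each unit interval.

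The missing idea is \emph{thinning}. A bush can be placed at a core vertex $j$ only while $j$ is bonsai-free, which happens a fraction $O((\log n)^{10}j/n)$ of the time, and this must enter the instantaneous drift. The paper does this with two devices absent from your plan.
\begin{itemize}
\item In the comparison process, bushes are not allowed to recover at core vertices; this still dominates the true bushes.
\item The filtration is generated by the bushes, the bush graphical construction and the look-ahead times $\sigma_S$, but not by the bonsais.
\end{itemize}
Then $j\notin\mathcal B_t$ certifies that $j$ was bonsai-occupied at the last bush loss and at every later failed invasion attempt. Comparing with the two-state chain gives $\mathbb P(j\notin\bm{\beta}_t\mid\mathscr F_t)\le 64e^3(\log n)^{10}j/(\Lambda_{\mathcal T}^2n)$ (Lemma~\ref{lem_thinning}). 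If core bushes could recover, $j\notin\mathcal B_t$ would carry no such information.

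With this factor, invading $j$ costs only $\lambda_{\mathcal B}(n/j)^\kappa\cdot O((\log n)^{10}j/n)$, and the weights close. The paper takes $(n/i)^\kappa$ on the core and $n^\eta e^{\alpha(\sigma_S(t)-t)}$ on satellites, with $\kappa<(2-\tau)/(\tau-1)$ and $\epsilon\kappa<\eta<\kappa(1-\epsilon)$. The polynomial factor $n^\eta$ is needed because the good-graph bounds only give $(\log n)^{O(1)}n^{\epsilon\kappa}$ for the thinned satellite-to-core cost. So the $O(1)$ satellite weights in your sketch would not suffice with only those bounds. Finally, the bonsai rate must be proportional to $\deg(i)$ (as weak litness provides), not the uniform $n^{\epsilon}(\log n)^{-10}$ of your Step~1, or hub bushes would not be removed fast enough.
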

We postpone the proof to the next section.

\begin{proof}[Proof of Theorem~\ref{thm_main_heavy}]
	Take~$\epsilon > 0$ and~$m \in \mathbb N$ as required by Proposition~\ref{prop_graph_is_good}, and also assume that~$\epsilon$ is small enough as required by Proposition~\ref{prop_bushes_die_in_good}. We fix a deterministic graph~$G$ such that~$G$ is~$(\epsilon,m)$-good with~$n$ vertices. Letting~$(\mathcal T_t,\mathcal B_t)_{t \ge 0}$ be the grass-bushes-trees process on~$G$ starting from an arbitrary initial configuration, we will show that, if~$n$ is large enough,
	\begin{equation}\label{eq_complete_Markov}
		\mathbb P(\mathcal T_{2C_0 \log n} \neq \varnothing,\; \mathcal B_{2C_0\log n} \neq \varnothing) < \frac{1}{n}.
	\end{equation}
	The statement of the theorem then follows from this bound and Proposition~\ref{prop_graph_is_good}.

Abbreviating~$t_0:=C_0\log n$, we have
	\begin{align*}
		\mathbb P(\mathcal T_{2t_0} \neq \varnothing,\;\mathcal B_{2t_0} \neq \varnothing ) \le \mathbb P(\mathcal T_{t_0} \neq \varnothing,\;\mathcal B_{2t_0} \neq \varnothing ).
	\end{align*}
	Letting
	\[
		A_i:= \{i \text{ is semi-lit in $\mathcal T_{t_0}$} \}\quad \text{and} \quad A:=\bigcap_{i \in \text{core}}A_i,
	\]
	we bound 
	\begin{equation}\label{eq_complete_Markov0}
	\mathbb P(\mathcal T_{t_0} \neq \varnothing,\;\mathcal B_{2t_0} \neq \varnothing ) \le\mathbb P(\{\mathcal T_{t_0} \neq \varnothing\} \cap A^c) + \mathbb E[ \mathbb P ( \mathcal B_{2t_0} \neq \varnothing \mid (\mathcal T_{t_0}, \mathcal B_{t_0})  ) \cdot  \mathds{1}_A].
	\end{equation}
	By the Markov property and Proposition~\ref{prop_bushes_die_in_good}, 
	\begin{equation}
		\label{eq_complete_Markov1}
		\mathbb E[ \mathbb P ( \mathcal B_{2t_0} \neq \varnothing \mid (\mathcal T_{t_0}, \mathcal B_{t_0})  ) \cdot  \mathds{1}_A] \le \frac{1}{2n}.
	\end{equation}
	Next, for each~$i$ in the core, let~$\sigma_i := \inf\{t: i \text{ is lit in }\mathcal T_t \}$. We then bound
	\begin{align*}
		\mathbb P(\{\mathcal T_{t_0} \neq \varnothing\} \cap A_i^c) \le \mathbb P(\{ \mathcal T_{t_0/2} \neq \varnothing\} \cap \{\sigma_i > t_0/2\}) + \mathbb P(A_i^c \mid \sigma_i \le t_0/2).
	\end{align*}
	Lemma~\ref{lem_core_gbt}$\mathrm{(ii)}$ implies that
	\[
		\mathbb P(\{ \mathcal T_{t_0/2} \neq \varnothing\} \cap \{\sigma_i > t_0/2\}) \le (1-\delta_0)^{\lfloor t_0/4\rfloor} \le \exp\left\{-\delta_0 \cdot \left\lfloor \frac{C_0 \log n}{4}\right\rfloor \right\} < \frac{1}{n^2},
	\]
	if~$n$ is large enough.	Lemma~\ref{lem_all_stars}$\mathrm{(b)}$ implies that
	\[
		\mathbb P(A_i^c \mid \sigma_i \le t_0/2) \le \exp\{-c_{\mathrm{star}}(\lambda_{\mathcal T}^2 \wedge 1) \deg(i) \} \stackrel{\eqref{eq_bounds_degrees}}{\le} \exp\{-c_{\mathrm{star}}(\lambda_{\mathcal T}^2 \wedge 1)n^{\epsilon}/(\log n)^{10} \} < \frac{1}{n^2}
	\]
	for~$n$ large.
A union bound over~$i$ in the core then shows that
	\begin{equation}
		\label{eq_complete_Markov2}
		\mathbb P(\{\mathcal T_{t_0} \neq \varnothing\} \cap A^c)  \le n^{1-\epsilon}\cdot \frac{2}{n^2} < \frac{1}{2n}.
	\end{equation}
	Using~\eqref{eq_complete_Markov1} and~\eqref{eq_complete_Markov2} to bound~\eqref{eq_complete_Markov0}, we obtain~\eqref{eq_complete_Markov}.
\end{proof}

\subsection{The bonsai process}
In this section, we define an auxiliary process which will play a key role in the proof of Proposition~\ref{prop_bushes_die_in_good}. We take
\begin{equation}\label{eq_choice_of_eps} 
0< \epsilon <\min\left\{\frac18,\frac{2-\tau}{2(\tau-1)}\right\};\end{equation}
	the reason for this choice is hard to summarize now, as it has to do with several restrictions involving other constants that will appear, but it will become clear at the very end of this section. Fix~$\lambda_{\mathcal T} > 0$, and write
\begin{equation*}
	\Lambda_{\mathcal T}:= \lambda_{\mathcal T} \wedge 1.
\end{equation*}
Also fix~$m \in \mathbb N$.
	Let~$G=(V,E)$ be an~$(\epsilon,m)$-good graph on~$n$ vertices.

\begin{definition}
The \emph{bonsai process}~$(\bm{\beta}_t)_{t \ge 0}$ is the continuous-time Markov chain on~$\{0,1\}^{V}$ in which:
\begin{itemize}
\item if~$i$ is in the satellites,~$\bm{\beta}_t(i) \equiv 0$;
\item if~$i$ is in the core,~$(\bm{\beta}_t(i))_{t \ge 0}$ jumps from~$0$ to~$1$ with rate~$\Lambda_{\mathcal T}^2\deg(i)/(32e^3)$, and jumps from~$1$ to~$0$ with rate~$1$; this evolution is independent for distinct~$i$.
\end{itemize}
\end{definition}
As is usual, we associate a configuration~${\beta} \in \{0,1\}^{V}$ with the set~$\{i:{\beta}(i)=1\}$; we regard this as the set of vertices occupied by little bonsai trees.

The bonsai process is much easier to analyze than the tree process (that is, the contact process), as it is just a product chain. We will prove that the bonsai process is dominated by the tree process in any period where in the latter, each vertex~$i$ in the core has a fraction at least~$\Lambda_{\mathcal T}/(32e^3)$ of neighbours occupied by trees. 

The domination is materialized by a coupling of $(\boldsymbol{\beta}_t, \mathcal{T}_t)_t$ given as follows. 
For each $i$ in the core, the coupled chain performs the following jumps (all rates are evaluated at the current state $(\boldsymbol{\beta}, \mathcal{T})$):
\begin{center}
\renewcommand{\arraystretch}{1.4}
\begin{tabular}{ll}
Transition & Rate \\
\hline
$(\boldsymbol{\beta}, \mathcal{T}) \to (\boldsymbol{\beta} \setminus \{i\},\ \mathcal{T} \setminus \{i\})$ 
  & $1$ \\[.1cm]
$(\boldsymbol{\beta}, \mathcal{T}) \to (\boldsymbol{\beta} \cup \{i\},\ \mathcal{T} \cup \{i\})$ 
  &$ \min\left(\Lambda_{\mathcal T}^2\dfrac{\deg(i)}{32e^3},\ \lambda_{\mathcal T}|\{j \in \mathcal{T} : j \sim i\}|\right)$  \\[.1cm]
$(\boldsymbol{\beta}, \mathcal{T}) \to (\boldsymbol{\beta},\ \mathcal{T} \cup \{i\})$ 
  & $\Big(\lambda_{\mathcal{T}}|\{j \in \mathcal{T} : j \sim i\}| -\Lambda_{\mathcal{T}}^2 \dfrac{\deg(i)}{32e^3}\Big)^{\!+}$ \\[.1cm]
 $(\boldsymbol{\beta}, \mathcal{T}) \to (\boldsymbol{\beta} \cup \{i\},\ \mathcal{T})$
  & $\Big(\Lambda_{\mathcal{T}}^2\dfrac{\deg(i)}{32e^3} - \lambda_{\mathcal{T}}|\{j \in \mathcal{T} : j \sim i\}|\Big)^{\!+}$ 
\end{tabular}
\end{center}
In a satellite vertex~$i$, bonsais can never be present (in agreement with the definition of the bonsai dynamics), and trees evolve like a regular contact process; that is, transitions are:
\begin{center}
\renewcommand{\arraystretch}{1.4}
\begin{tabular}{ll}
Transition & Rate \\
\hline
   $(\bm{\beta},\mathcal{T}) \to (\bm{\beta},\ \mathcal{T}\setminus\{i\})$ 
  & $1$ \\
   $(\bm{\beta},\mathcal{T}) \to (\bm{\beta},\ \mathcal{T}\cup\{i\})$ 
  & $\lambda_{\mathcal T}|\{j \in \mathcal{T}: j \sim i\}|$ 
\end{tabular}
\end{center}
It is then elementary that the marginal processes have the correct rates of the bonsai process and the tree process, respectively. 

\begin{lemma}\label{lem_coupling_bonsai} The above coupling~$(\bm{\beta}_t,\mathcal T_t)_t$ satisfies the following.
Let~$i$ be a core vertex and assume that~$|\{j \in \mathcal T_t: j \sim i\}| \ge \Lambda_{\mathcal T}\frac{\deg(i)}{32e^3}$ for all~$t \in [0,T]$ for some~$T > 0$. Then,
\begin{itemize}
\item if~$(\bm{\beta}_t(i))_t$ has a jump at time~$s \in [0,T]$, then~$\mathcal T_s(i)\ge \bm{\beta}_s(i)$;
\item if~$\mathcal T_r(i) \ge \bm{\beta}_r(i)$ for some~$r \in [0,T]$, then~$\mathcal T_t(i) \ge \bm{\beta}_t(i)$ for all~$t \in [r,T]$.
\end{itemize}
\end{lemma}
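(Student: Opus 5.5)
The proof is a direct inspection of the transition table of the coupling, restricted to the coordinate~$i$. The key observation is that, under the hypothesis $|\{j \in \mathcal T_t: j \sim i\}| \ge \Lambda_{\mathcal T}\frac{\deg(i)}{32e^3}$ on $[0,T]$, we have
\[
\lambda_{\mathcal T}|\{j \in \mathcal T_t: j\sim i\}| \;\ge\; \lambda_{\mathcal T}\Lambda_{\mathcal T}\frac{\deg(i)}{32e^3} \;\ge\; \Lambda_{\mathcal T}^2\frac{\deg(i)}{32e^3},
\]
using $\lambda_{\mathcal T} \ge \Lambda_{\mathcal T}$. Consequently, throughout $[0,T]$ the fourth transition in the core table, which adds a bonsai at~$i$ without adding a tree, has rate $\big(\Lambda_{\mathcal T}^2\deg(i)/(32e^3) - \lambda_{\mathcal T}|\{j\in\mathcal T:j\sim i\}|\big)^+ = 0$. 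Hence, almost surely, it never occurs at~$i$ during $[0,T]$. Here I would note that the rate is evaluated at the left-limit state, and the hypothesis holds at all $t \in [0,T]$, so a countable collection of jumps with zero rate almost surely does not fire.

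For the first bullet, every jump of $\bm\beta(i)$ at a time $s\in[0,T]$ must then be of one of two kinds. It is either the first transition (both coordinates removed), after which $\bm\beta_s(i)=0\le\mathcal T_s(i)$. Or it is the second transition (both coordinates added), after which $\mathcal T_s(i)=1\ge\bm\beta_s(i)$. In either case $\mathcal T_s(i)\ge\bm\beta_s(i)$ right after the jump.

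For the second bullet, I would argue that the set of states with $\mathcal T(i)\ge\bm\beta(i)$ at coordinate~$i$ is preserved by every transition that can occur during $[r,T]$. Suppose the order holds just before a jump affecting coordinate~$i$.
\begin{itemize}
\item The joint death sends both coordinates to $0$, so the order holds afterwards.
\item The joint birth sets $\mathcal T(i)=1$, so the order holds afterwards.
\item The tree-only birth only increases $\mathcal T(i)$, so the order holds afterwards.
\item The bonsai-only birth is excluded by the zero-rate observation.
\end{itemize}
Transitions at other vertices leave coordinate~$i$ unchanged; they may change the neighbour count, but only within the hypothesis. Since there are almost surely finitely many jumps at~$i$ in $[r,T]$, induction over the jump times gives $\mathcal T_t(i)\ge\bm\beta_t(i)$ for all $t\in[r,T]$.

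The only delicate point is the zero-rate step, namely making sure the hypothesis is applied at the pre-jump state. I expect no genuine obstacle there; the rest is bookkeeping.
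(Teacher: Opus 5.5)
Your proposal is correct, and it is exactly the elementary verification the paper omits: under the hypothesis, $\lambda_{\mathcal T}|\{j\in\mathcal T_{s-}: j\sim i\}|\ge \lambda_{\mathcal T}\Lambda_{\mathcal T}\deg(i)/(32e^3)\ge \Lambda_{\mathcal T}^2\deg(i)/(32e^3)$, so the bonsai-only birth at $i$ has rate zero. Every other transition at $i$ either sets $\mathcal T(i)=1$ or sends both coordinates to $0$, which gives both bullets. Your zero-rate step can be made rigorous for the random hypothesis event by the compensator formula: the expected number of bonsai-only births at $i$ during $[0,T]$ from pre-jump states satisfying the neighbour-count bound is $\mathbb E\int_0^T 0\,\mathrm{d}s=0$.
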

We omit the proof, since it is elementary.

We now consider a \emph{grass-bushes-bonsais process}~$(\mathcal B_t,\bm{\beta}_t)_{t \ge 0}$ on~$G$ analogously to how the grass-bushes-trees process is defined (with the bonsais playing the roles of trees),  \emph{with one key difference}: bushes cannot die spontaneously at the core, and only disappear by being overwritten by bonsais.
More precisely, the dynamics has rates given as follows.
 For a core vertex~$i$, the transitions are
\begin{center}
\renewcommand{\arraystretch}{1.4}
\begin{tabular}{ll}
Transition &Rate  \\
\hline
   $(\bm{\beta},\mathcal{B}) \to (\bm{\beta} \setminus \{i\},\ \mathcal{B})$ 
  & $1$ \\[.1cm]
  $(\bm{\beta},\mathcal{B}) \to (\bm{\beta}\cup \{i\},\ \mathcal{B} \setminus \{i\})$ 
  & $\Lambda_{\mathcal T}^2 \frac{\deg(i)}{32e^3}$\\[.1cm]
  $(\bm{\beta},\mathcal{B}) \to (\bm{\beta},\ \mathcal{B} \cup \{i\})$ 
  & $\lambda_{\mathcal B}|\{j \in \mathcal B: j \sim i\}| \cdot \mathds{1}\{i \notin \bm{\beta} \}$ 
\end{tabular}
\end{center}
For a satellite vertex~$i$, the transitions are
\begin{center}
\renewcommand{\arraystretch}{1.4}
\begin{tabular}{ll}
Rate & Transition \\
\hline
   $(\bm{\beta},\mathcal{B}) \to (\bm{\beta},\ \mathcal{B}\setminus\{i\})$ 
  & $1$ \\[.1cm]
  $(\bm{\beta},\mathcal{B}) \to (\bm{\beta},\ \mathcal{B} \cup \{i\})$ 
  & $\lambda_{\mathcal B}|\{j \in \mathcal B: j \sim i\}| $ 
\end{tabular}
\end{center}

We now state a lemma that says that, although they are weaker than trees, the bonsais are already enough to eliminate all bushes in a small amount of time.
\begin{lemma}\label{lem_bonsais_win1}
There exist constants~$c,C > 0$ such that the following holds for~$n$ large enough.
For the grass-bushes-bonsais process on an~$(\epsilon,m)$-good graph on~$n$ vertices, started from an arbitrary initial configuration, the probability that there are no bushes at time~$t$ is larger than~$1-n^{C}e^{-ct}$.
\end{lemma}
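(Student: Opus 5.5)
The plan is to build a nonnegative functional $\Phi_t$ of the bush configuration that vanishes exactly when there are no bushes and is at least a fixed positive constant otherwise, and which satisfies $\mathbb E[\Phi_{t+1}\mid\mathcal F_t]\le (1-c')\Phi_t$. Since $\Phi_0 \le n^{C}$ for every initial configuration, iterating this gives $\mathbb P(\mathcal B_t\neq\varnothing)\le \mathbb E[\Phi_t]/\min\Phi \le n^{C}e^{-ct}$.

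The weights come from the structure of an $(\epsilon,m)$-good graph. A core vertex $i$ receives weight $w_i$, and each satellite $S$ (a component of size at most $m$) receives a weight depending on its bush configuration. Take $\Phi=\sum_{i\in\text{core}} w_i\mathds 1\{i\in\mathcal B\}+\sum_S W_S(\mathcal B\cap S)$. In the core, bushes never die spontaneously and are removed only by bonsai births, which occur at rate $\Lambda_{\mathcal T}^2\deg(i)/(32e^3)$. This removal rate is large, of order at least $n/(i(\log n)^{10})\ge n^{\epsilon}/(\log n)^{10}$ by \eqref{eq_bounds_degrees}. Moreover, a core vertex is free to receive bushes only while its bonsai is absent, which happens a fraction $O(1/\deg(i))$ of the time at stationarity.

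I would therefore fold the bonsai state into the weight. An empty core site that is not bonsai-protected carries weight proportional to the chance it is reinfected, and a bush at $i$ carries weight $w_i$. The choice $w_i=(n/i)^{-\theta}$, or something similar, is meant to balance the gain from removal, of order $w_i\deg(i)$, against the loss from neighbours pushing bushes into $i$. That pushing can happen only during the $O(1/\deg(i))$ fraction of time when $i$ is bonsai-free, and each bush neighbour contributes at rate $\lambda_{\mathcal B}$. Bush-to-bush transfers inside the core should then be controlled by \eqref{eq_guaranteed_edges} and \eqref{eq_bounds_degrees}. The core-to-satellite flux should be controlled by \eqref{eq_neighbours_in_sat}, which bounds the number of satellite neighbours of a core vertex. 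The restriction $\epsilon<\frac{2-\tau}{2(\tau-1)}$ in \eqref{eq_choice_of_eps} should be exactly what makes these sums over neighbours smaller than the removal term.

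For a satellite $S$ the difficulty is that $\lambda_{\mathcal B}$ is arbitrary, so bushes inside $S$ need not be subcritical. This is where I would look into the future of the graphical construction. I would define $W_S(A)$ as a truncated exponential moment $\mathbb E[e^{\alpha T^A}]$ of the extinction time $T^A$ of the bush contact process restricted to $S$, with $T^A$ read off from the future marks. Because $|S|\le m$, Lemma~\ref{lem_prelim_moment}(a) makes this finite for some $\alpha=\alpha(m,\lambda_{\mathcal B})>0$, and the quantity $W_S(\mathcal B_t\cap S)$ decreases in expectation at rate $\alpha$ under the internal dynamics. Lemma~\ref{lem_prelim_moment}(b), applied with $f(x)=e^{\alpha x}$, then bounds the increase in $W_S$ when an outside core bush adds a vertex $v$ by the constant $\mathbb E[e^{\alpha T^{\{v\}}}]$. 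So each core bush adjacent to $S$ feeds in at most rate $\lambda_{\mathcal B}\cdot O(1)$. Conversely, bushes in $S$ infect the core, but only at the core's bonsai-free times, which gives small expected weight per unit time.

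Putting these together, the generator applied to $\Phi$ should be at most $-c\Phi$ plus cross terms, and the cross terms must be absorbed. I expect this absorption to be the main obstacle. One concrete problem is that core vertices with small index have very many satellite neighbours. The other is that the bonsai-free state is random, so the drift bound holds only on average over a time window rather than pointwise.

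My first attempt would be to work over unit time steps and average over the bonsai chains, which are independent product chains and hence easy to handle. I would tune $w_i$ with an exponent chosen using \eqref{eq_neighbours_in_sat} and \eqref{eq_choice_of_eps}, adjusting it if needed. The polynomial factor $n^{C}$ then comes from $\Phi_0\le\sum_i w_i+\sum_S\max W_S$.
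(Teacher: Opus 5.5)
Your architecture matches the paper's. You take a weighted sum of core bush indicators plus, for each satellite, an exponential moment of the bushes' extinction time inside it, with Lemma~\ref{lem_prelim_moment}(a) for finiteness and Lemma~\ref{lem_prelim_moment}(b) for the jump when a core bush enters. Your Markovian weight $\mathbb E[e^{\alpha T^A}]\mathds 1\{A\neq\varnothing\}$ is an acceptable substitute for the paper's realized $e^{\alpha(\sigma_S(t)-t)}$. But the two issues you defer as ``the main obstacle'' are where the proof actually lives, and the proposal resolves neither. There is a genuine gap.

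\emph{First gap: the weights point the wrong way.} For $i\le n^{\epsilon}$, \eqref{eq_neighbours_in_sat} gives nothing, and such $i$ genuinely has of order $n/i$ satellite neighbours up to logarithms; vertex $1$ is adjacent to $n/(\log n)^{O(1)}$ satellite vertices. A bush at vertex $1$ next to empty satellites therefore raises $\Phi$ at rate $\gtrsim\lambda_{\mathcal B}n/(\log n)^{O(1)}$. Its removal gains only $\Lambda_{\mathcal T}^2\deg(1)w_1/(32e^3)\le(\log n)^{\mathsf k}n^{1-\theta}$ with your $w_1=n^{-\theta}$, so the drift fails. Low-index core weights must dominate satellite weights by more than the polylogarithmic losses in \eqref{eq_bounds_degrees}. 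That in turn forces the satellites to be polynomially heavy: a satellite vertex may have $(\log n)^{\mathsf k}n^{\epsilon}$ core neighbours $j$, each costing about $\lambda_{\mathcal B}(\log n)^{10}(j/n)w_j$. The paper resolves this with $w_i=(n/i)^{\kappa}$ and satellite weight $n^{\eta}e^{\alpha(\sigma_S(t)-t)}$, subject to $\epsilon\kappa<\eta<\kappa(1-\epsilon)$ and $\kappa<\frac{2-\tau}{\tau-1}$; \eqref{eq_choice_of_eps} is what allows $\kappa=2\epsilon$. The lower bound on $\eta$ absorbs satellite-to-core pushes. The upper bound absorbs core-to-satellite pushes for $i\le n^\epsilon$. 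The condition $\kappa<\frac{2-\tau}{\tau-1}$, together with \eqref{eq_neighbours_in_sat}, handles $i\ge n^\epsilon$.

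\emph{Second gap: the bonsai-free probability.} Averaging over the bonsai chains on unit windows does not give a drift inequality. The bonsais start from $\varnothing$, not from stationarity. More importantly, conditioning on the bush history changes their law: failed birth arrows reveal bonsais, and successful ones reveal their absence. If you put the bonsais in the filtration instead, the drift is uncontrolled whenever empty bonsai-free core sites sit next to bushes.

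The missing idea is Lemma~\ref{lem_thinning}:
\begin{itemize}
\item Reduce by monotonicity to $\mathcal B_0=V$, and use a filtration containing the bushes and their graphical construction but not the bonsais.
\item Since core bushes never die spontaneously, an empty core site $i$ at time $t$ was bonsai-occupied at an $\mathscr F_t$-measurable time: its last bush time, or the last failed arrow into it.
\item After that time, the bonsai chain at $i$ is conditionally a fresh two-state chain started from $1$.
\end{itemize}
This gives the \emph{pointwise} bound $\mathbb P(i\notin\bm\beta_t\mid\mathscr F_t)\le 64e^3(\log n)^{10}i/(\Lambda_{\mathcal T}^2 n)$. Hence the instantaneous drift is at most $-\frac{\alpha}{2}X_t$, and Gr\"onwall finishes the argument with no time windows. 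Folding the bonsai state into $\Phi$ might replace this, but you have not specified that function or controlled the cross terms it creates.
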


We postpone the proof to Section~\ref{ss_bonsais_win}.

\begin{proof}[Proof of Proposition~\ref{prop_bushes_die_in_good}]
	Take~$t=C_0\log n$, with~$C_0$ to be chosen large enough.
	We take the coupling~$(\bm{\beta}_t,\mathcal{T}_t)_t$ described above of the bonsai and tree processes, with~$\bm{\beta}_0 = \varnothing$ and~$\mathcal T_0$ equal to the initial configuration given in the statement of the lemma. Independently of these, we take a graphical construction~$H$ of the contact process with parameter~$\lambda_{\mathcal B}$ on~$G$. We let~$H'$ be the modification of~$H$ in which we remove all recovery marks at core vertices. We use~$H$ to construct bushes (denoted~$(\mathcal B_t)_t$) competing against the trees in~$(\mathcal T_t)_t$, and we use~$H'$ to construct bushes (denoted~$(\mathcal B_t')_t$) competing against the bonsais in~$(\bm{\beta}_t)_t$.

	Letting
	\[
		T_{\mathrm{coup}} := \sup\{t: \bm{\beta}_s \le \mathcal T_s \text{ for all }s \le t\},
	\]
	we clearly have
	\[
		\mathcal B_t' \supseteq \mathcal B_t \quad \text{for all } t \le T_{\mathrm{coup}}.
	\]
	Hence, 
	\[
		\mathbb P(\mathcal B_t \neq \varnothing) \le \mathbb P(T_{\mathrm{coup}} \le t) + \mathbb P(\mathcal B_t' \neq \varnothing).
	\]
Lemma~\ref{lem_bonsais_win1} implies that
	\begin{equation}\label{eq_1st_rhs}
		\mathbb P(\mathcal B_t' \neq \varnothing) \le n^Ce^{-ct}
	\end{equation}
	for some~$c,C > 0$. Next, Lemma~\ref{lem_coupling_bonsai} gives
	\[
		\mathbb P(T_{\mathrm{coup}} \le t) \le \sum_{i\in \mathrm{core}}\mathbb P(\exists s \le t: \text{$i$ is not weakly lit in $\mathcal T_s$}).
	\]
	By~\eqref{eq_bounds_degrees}, the exponential time horizon in Lemma~\ref{lem_all_stars} exceeds~$t$ at every core vertex when~$n$ is large. Since all core vertices are initially semi-lit, we apply part~$(\mathrm{c})$ on~$[0,1]$ and then part~$(\mathrm{b})$ from time~$1$, when the stars are lit. The right-hand side above is therefore at most
	\begin{equation}\label{eq_2nd_rhs}
		2\sum_{i \in \mathrm{core}}\exp\left\{-c_{\mathrm{star}}\cdot (\Lambda_{\mathcal{T}})^2\cdot  \deg(i)\right\} \stackrel{\eqref{eq_bounds_degrees}}{\le} 2\sum_{i=1}^{\lfloor n^{1-\epsilon}\rfloor} \exp\left\{ -c_{\mathrm{star}}\cdot (\Lambda_{\mathcal{T}})^2\cdot \frac{n}{(\log n)^{10}i}\right\}.
	\end{equation}
	Taking~$C_0$ large enough that~$cC_0>C+1$, the right-hand sides of both~\eqref{eq_1st_rhs} and~\eqref{eq_2nd_rhs} are smaller than~$1/(4n)$ when~$n$ is large enough, completing the proof.
\end{proof}

\subsection{Bonsais destroy bushes}
\label{ss_bonsais_win}
In this section, we prove Lemma~\ref{lem_bonsais_win1}.
We work on an~$(\epsilon,m)$-good graph~$G$, and let
\[\mathscr C:=\{1,\ldots, \lfloor n^{1-\epsilon}\rfloor\}\]
denote its core.

In proving Lemma~\ref{lem_bonsais_win1}, by monotonicity, it is enough to prove that the configuration at time~$0$ is~$\mathcal B_0=V$ and~$\bm{\beta}_0=\varnothing$, so we assume this throughout the section.

It will be useful to have the evolution of the bushes given by a Poisson graphical construction that is killed upon touching bonsais. More precisely, we start by constructing the bonsai process~$(\bm{\beta}_t)_t$ with the rates in its definition, as a Markov chain on~$\{0,1\}^V$, started from~$\bm{\beta}_0=\varnothing$. 
We then take a graphical construction of the contact process on~$V$ with rate~$\lambda_{\mathcal B}$, \emph{except that it has no death marks in the core}, and set
\[
\mathcal B_t(i)=\mathds{1}\left\{
\vcenter{\hbox{\begin{varwidth}{0.7\linewidth}
there exists an infection path $\gamma:[0,t]\to V$ such that
$\gamma(0)\in \mathcal B_0$, $\gamma(t)=i$, and
$\gamma(s)\notin \bm{\beta}_s$ for all $s\in[0,t]$
\end{varwidth}}}
\right\}.
\]
Then, the pair~$(\bm{\beta}_t,\mathcal B_t)_t$ has the dynamics of the grass-bushes-bonsais process.

Let~$S$ be a satellite, that is, a connected component of the subgraph induced by~$\{i \in V: i > n^{1-\epsilon}\}$. For each~$t \ge 0$, we will define a positive random variable~$\sigma_S(t) \ge t$, which will depend on~$\mathcal B_t \cap S$ and the graphical construction on~$S \times [t,\infty)$. In case~$\mathcal B_t \cap S = \varnothing$, we set~$\sigma_S(t)=t$. If~$\mathcal B_t \cap S \neq \varnothing$, we set
\[
\sigma_S(t):=\sup\left\{ 
\vcenter{\hbox{\begin{varwidth}{0.7\linewidth}
$s \ge t$: there is an infection path from $(\mathcal B_t \cap S) \times \{t\}$ to $S \times \{s\}$ that stays inside $S$
\end{varwidth}}}
\right\}.
\]
This means that, using the graphical construction for the bushes, we run a contact process on~$S$ started at time~$t$ from~$\mathcal B_t \cap S$; we then define~$\sigma_S(t)\ge t$ as the extinction time of this process. 

%

It is worth it to observe that, in a time interval where~$S$ has bushes but receives no bush from the outside, the function~$t\mapsto \sigma_S(t)$ stays constant.

Our next step is to define a filtration~$(\mathscr F_t)_{t \ge 0}$. For each~$t \ge 0$, we let~$\mathscr F_t$ be the~$\sigma$-algebra generated by:~(1) the values~$(\mathcal B_s)_{0 \le s \le t}$,~(2) the graphical construction of the bush process on~$[0,t]$, and~(3) the values of~$\sigma_S(s)$ for all satellites~$S$ and all~$s \in [0,t]$.
\begin{remark}
	Putting aside item~(3) of the above list, the reader may be intrigued by the inclusion of~(1) and~(2) simultaneously, as they may seem to be redundant together. That is, however, not the case. An observer of the bush process who is kept ignorant of the graphical construction (and of the bonsais) would not know the parent of a bush that has just appeared, and would not know the times when bushes attempted to give birth but were unsuccessful due to the presence of bonsais. On the other hand, an observer of the graphical construction who is kept ignorant of the bush process (and of the bonsais) cannot fully recover the bush process, because of ignorance of the bonsais.
\end{remark}

We now let~$\eta, \kappa, \alpha > 0$ be small constants whose values will be chosen later. We define
\begin{align}
&X^{(i)}_t:=\left( \frac{n}{i}\right)^{\kappa} \cdot  \mathds{1}\{ i \in \mathcal B_t \},\qquad i \in \mathscr C,\; t \ge 0,\\[.2cm]
	&X^{(S)}_t:= n^\eta  e^{\alpha(\sigma_S(t)-t) } \cdot  \mathds{1}\{\mathcal B_t \cap S \neq \varnothing\},\qquad S \text{ satellite},\; t \ge 0, \label{eq_with_ind}
\end{align}
and
\[
	X_t:=\sum_{i \in \mathscr C} X^{(i)}_t + \sum_{S \text{ satellite}} X^{(S)}_t,\qquad M_t:=e^{\frac{\alpha}{2} t}X_t,\qquad t \ge 0.
\]

\begin{lemma}\label{lem_super}
For~$\eta,\kappa,\alpha$ suitably chosen, the process~$(M_t)_{t \ge 0}$ is a supermartingale with respect to~$(\mathscr{F}_t)_{t \ge 0}$.
\end{lemma}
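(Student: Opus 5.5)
We verify the supermartingale property by computing the drift of $M_t$ given $\mathscr F_t$: we show that the generator-type quantity $\lim_{h\downarrow 0} h^{-1}\mathbb E[M_{t+h}-M_t\mid \mathscr F_t]$ is at most $0$, and we check separately that the jumps are integrable. Since $M_t=e^{\alpha t/2}X_t$, the task reduces to showing
\[
\frac{\mathrm d}{\mathrm dh}\mathbb E[X_{t+h}\mid\mathscr F_t]\big|_{h=0}\le -\frac{\alpha}{2}X_t .
\]
We split the drift of $X$ into contributions from core terms and satellite terms, and within each into \emph{decrease} mechanisms and \emph{increase} mechanisms.

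\textbf{Core terms.} For $i\in\mathscr C$ with $i\in\mathcal B_t$, the bonsai overwrites $i$ at rate $\Lambda_{\mathcal T}^2\deg(i)/(32e^3)\ge c\,(\log n)^{-10}n/i\ge c n^{\epsilon}(\log n)^{-10}$. Note that this rate is not observable from $\mathscr F_t$. Conditionally on $\mathscr F_t$, however, the bonsai process is independent of the graphical construction, and its state at a bush site only restricts it to be $0$ there, which is compatible with the jump rate. This gives a negative drift $-\Omega(n^{\epsilon}/\mathrm{polylog})\,X^{(i)}_t$, which easily dominates $\frac{\alpha}{2}X^{(i)}_t$.

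\textbf{Increase mechanisms.} There are three.
\begin{itemize}
\item[(a)] A bush at $i\in\mathscr C$ infects a core vertex $j$. This occurs at rate at most $\lambda_{\mathcal B}$ per edge, and adds $(n/j)^\kappa$.
\item[(b)] A bush at core vertex $i$ infects a satellite $S$. If $S$ had no bushes, this creates a term $n^\eta e^{\alpha(\sigma_S-t)}$. Given $\mathscr F_t$, the quantity $\sigma_S-t$ is the extinction time of a fresh contact process on $S$ started from one vertex, so by Lemma~\ref{lem_prelim_moment}(a) its conditional expectation is bounded by a constant $K(m,\lambda_{\mathcal B})$, provided $\alpha$ is small. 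If $S$ already had bushes, Lemma~\ref{lem_prelim_moment}(b) with $f(x)=e^{\alpha x}$ bounds the expected increment in $e^{\alpha(\sigma_S-t)}$ by the same $K$.
\item[(c)] A satellite $S$ containing bushes infects a core vertex $j$. Since $|S|\le m$, this happens at rate at most $m\lambda_{\mathcal B}|\{j\in\mathscr C:j\sim S\}|$.
\end{itemize}
To balance these, we sum for each bush site $i$ the gains it generates:
\[
\lambda_{\mathcal B}\Bigl(\sum_{j\sim i,\,j\in\mathscr C}(n/j)^\kappa + K n^\eta\,|\{j\ge n^{1-\epsilon}: j\sim i\}|\Bigr).
\]
Using \eqref{eq_bounds_degrees}, the first sum is at most $\mathrm{polylog}\cdot (n/i)\cdot n^{\kappa}$ in the worst case. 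This is the delicate part. We will instead bound it as $\deg(i)\max_j (n/j)^\kappa$, and compare with the loss $\deg(i)(n/i)^\kappa(\log n)^{-10}$ coming from the rate bound above. This requires the neighbours $j$ of $i$ to satisfy $(n/j)^\kappa\ll (n/i)^\kappa$, which fails. So the comparison must instead use the structure of the degrees: vertices $j$ with small index ($j\le i$) number at most $i$, and each contributes at most $n^\kappa$; vertices with $j>i$ contribute at most $(n/i)^\kappa$ each. The first group gives roughly $i\,n^{\kappa}$, which we need to be $\ll (n/i)^{1+\kappa}(\log n)^{-10}$. This holds for $i\le n^{1/2-\delta}$, which is the range where the choice $\epsilon<1/8$ and a small $\kappa$ should enter. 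For larger core $i$, we use the cap $\deg\le \mathrm{polylog}\cdot n/i$ directly. The satellite term is controlled by \eqref{eq_neighbours_in_sat}: we need $n^\eta (n^\epsilon/i)^{(2-\tau)/(\tau-1)}\ll (n/i)^{\kappa}(\log n)^{-10}$. This is the reason for the constraint $\epsilon<(2-\tau)/(2(\tau-1))$, and we will take $\eta<$ the resulting margin with $\kappa$ adjusted to match.

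\textbf{Satellite terms.} Between incoming infections, $\sigma_S(t)$ is constant, so $X^{(S)}_t$ decays deterministically at rate $\alpha$, giving drift $-\alpha X^{(S)}_t$. This beats $+\frac{\alpha}{2}X^{(S)}$. When $\sigma_S=t$ the term drops to $0$, and that jump is downward. Note that $X^{(S)}\ge n^\eta$ whenever it is nonzero, so its outgoing infections of type (c) cost at most $m\lambda_{\mathcal B}\sum_{j\sim S}(n/j)^\kappa$. We bound this by $m\lambda_{\mathcal B}\, m(\log n)^{\mathsf k}\, n^{\kappa}\cdot(\text{number of core neighbours})$, and need it to be $\le \frac{\alpha}{2}n^\eta$. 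This is the step I expect to be the main obstacle: a satellite vertex may neighbour a very high-degree core vertex $j=1$, giving a cost of $n^\kappa$. The natural fix is to insist on $\kappa<\eta$. Then the core-to-satellite gain $K n^\eta$ must be absorbed by the core loss, which is $\gtrsim (n/i)^{1+\kappa}(\log n)^{-10}$ times the number of satellite neighbours; this is exactly what the preceding calculation checks. Satellite vertices have bounded degree in expectation, but we must also count their core neighbours. Since a satellite vertex $j>n^{1-\epsilon}$ has degree at most $(\log n)^{\mathsf k}n^{\epsilon}$ by \eqref{eq_bounds_degrees}, this gives the condition $\kappa+\epsilon<\eta$ up to logarithms. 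We therefore choose $\kappa$ small, then $\eta\in(\kappa+\epsilon, \ldots)$ compatible with the previous constraint, and finally $\alpha$ small enough for Lemma~\ref{lem_prelim_moment}(a). Checking that these constraints are simultaneously satisfiable under \eqref{eq_choice_of_eps} is the crux of the argument.
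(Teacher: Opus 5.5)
There is a genuine gap, and it sits exactly at the step you call delicate. Throughout, you charge a birth attempt from a bush onto a non-bush core vertex $j$ at full rate $\lambda_{\mathcal B}$ with gain $(n/j)^\kappa$. In effect you bound $\mathbb P(j\notin\bm{\beta}_t\mid\mathscr F_t)$ by $1$. With the weights fixed in the definition of $X_t$, no choice of $\eta,\kappa$ then works.

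Take a bush at a core vertex $i\approx n^{1-\epsilon}$. By \eqref{eq_guaranteed_edges}, $i$ is adjacent to every $j\le 4n^{\epsilon}(\log n)^{-10}$, so your gain is of order $n^{\kappa+\epsilon(1-\kappa)}$ up to logarithms. The loss is at most $(\log n)^{\mathsf k}(n/i)^{1+\kappa}\approx n^{\epsilon(1+\kappa)}$, and $\kappa+\epsilon-\epsilon\kappa>\epsilon+\epsilon\kappa$ whenever $\epsilon<1/2$. Your index split only covers $i\le n^{1/2-\delta}$. Even there, charging $(n/i)^\kappa$ per neighbour $j>i$ gives $\lambda_{\mathcal B}\deg(i)(n/i)^\kappa$ against a loss of $\frac{\Lambda_{\mathcal T}^2}{32e^3}\deg(i)(n/i)^\kappa$, which fails for large $\lambda_{\mathcal B}$. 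The fallback cap $\deg(i)\le(\log n)^{\mathsf k}n/i$ is worse, not better.

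The satellite side fails in the same way. A satellite vertex of index just above $n^{1-\epsilon}$ also neighbours every $j\le 4n^{\epsilon}(\log n)^{-10}$. Your crude cost therefore forces $\eta\ge\kappa+\epsilon(1-\kappa)$ up to logarithms; this is your $\eta>\kappa+\epsilon$. On the other hand, vertex $1$ neighbours at least $4n(\log n)^{-10}-n^{1-\epsilon}$ satellite vertices. Each of these is worth an expected jump of at least $n^\eta$ when its satellite is empty, and this is exact, not a crude bound. The loss at vertex $1$ is at most $\deg(1)\,n^\kappa\le n^{1+\kappa}$, which forces $\eta\le\kappa$. So the constraint system you defer as ``the crux'' is empty.

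The missing ingredient is the thinning estimate, Lemma~\ref{lem_thinning}. It is the reason the bushes are given no death marks in the core, a feature your proposal never uses. If a core vertex $j$ holds no bush at time $t$, its last bush must have been overwritten by a bonsai, so $j\in\bm{\beta}_{\mathsf T_j}$. The only further information $\mathscr F_t$ carries about the bonsai at $j$ is that it was present at the observed failed birth attempts onto $j$. After the last of these, it runs as a two-state chain started from state $1$. Hence $\mathbb P(j\notin\bm{\beta}_t\mid\mathscr F_t)\le 64e^3\Lambda_{\mathcal T}^{-2}(\log n)^{10}\,j/n$, and an attempt on $j$ is worth only $\mathsf b(\mathcal B_t,j)\lesssim(\log n)^{10}(j/n)^{1-\kappa}$. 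In words, the hubs carry the large weights but are almost always protected.

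With this estimate, the core-to-core gain from a bush at $i$ is at most of order $(\log n)^{10+\mathsf k}\,n/i$. That is $\ll(\log n)^{-10}(n/i)^{1+\kappa}$, because $n/i\ge n^\epsilon$. The satellite-to-core cost is at most of order $m(\log n)^{10+\mathsf k}n^{\epsilon\kappa}$, so one only needs $\eta>\epsilon\kappa$. This is compatible with $\eta<\kappa(1-\epsilon)$ and with the condition coming from \eqref{eq_neighbours_in_sat}; for example, $\kappa=2\epsilon$ and $\eta\in(2\epsilon^2,\epsilon-2\epsilon^2)$.

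The rest of your outline matches the paper: the exact bonsai overwrite rate at bush sites, Lemma~\ref{lem_prelim_moment} for jumps into satellites, and the deterministic decay of $X^{(S)}$ with its downward jump to $0$. The paper formalises that harmless downward jump through $\widetilde X$ and Gr\"onwall.
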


Before we give the proof, let us see how this allows us to prove Lemma~\ref{lem_bonsais_win1}.

\begin{proof}[Proof of Lemma~\ref{lem_bonsais_win1}]
	For~$t \ge 0$, we bound
\[
	\mathbb P(\mathcal B_t \neq \varnothing) \le \mathbb E[X_t] = e^{-\frac{\alpha}{2} t} \cdot \mathbb E[M_t] \le e^{-\frac{\alpha}{2} t}\cdot  \mathbb E[M_0],
\]
where the first inequality follows from~$X_t \ge \mathds{1}\{X_t > 0\}$.
We have
\[
\mathbb E[M_0] \le \sum_{i \in \mathscr C} \left(\frac{n}{i}\right)^\kappa+ n^{\eta} \cdot \left( \sup_{X} \mathbb{E}[e^{\alpha X}]\right) \cdot |\{\text{satellites}\}|,
\]
where the supremum is taken over all random variables~$X$ whose distribution is that of the extinction time of the contact process with rate~$\lambda_{\mathcal B}$ on graphs with at most~$m$ vertices (starting from full occupancy). This shows that~$\mathbb E[M_0]$ grows at most polynomially in~$n$, concluding the proof.
\end{proof}

We need some preliminary results to prove Lemma~\ref{lem_super}. We separately derive bounds for the core and bounds for the satellites, and then put them all together.

\subsubsection{Bounds for the core}
We start with the following lemma, whose proof we omit as it is elementary, using the definitions of the bonsai and bush dynamics and of the filtration~$(\mathscr F_t)_t$.
\begin{lemma}\label{lem_derivative_core}
	For all~$i \in \mathscr C$, on the event~$\{i \in \mathcal B_t\}$, we have
	\[
		\left.\frac{\mathrm{d}}{\mathrm{d}h} \mathbb E\big[X^{(i)}_{t+h} \mid \mathscr F_t\big]\right|_{h=0+} = - \frac{\Lambda_{\mathcal{T}}^2 \deg(i)}{32e^3}\cdot  \left( \frac{n}{i}\right)^\kappa,
	\]
	and on the event~$\{i \notin \mathcal B_t\}$,
	\[
		\left.\frac{\mathrm{d}}{\mathrm{d}h} \mathbb E\big[X^{(i)}_{t+h} \mid \mathscr F_t\big]\right|_{h=0+} = \lambda_{\mathcal B}\cdot  |\{j: j \in \mathcal B_t,\\ j \sim i \}|\cdot \mathbb P(i \notin \bm{\beta}_t \mid \mathscr{F}_t)\cdot \left( \frac{n}{i} \right)^{\kappa}.
	\]
\end{lemma}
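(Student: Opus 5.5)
Since $X^{(i)}_t = (n/i)^\kappa \mathds{1}\{i\in\mathcal B_t\}$ is a deterministic multiple of an indicator, I will compute the conditional jump rates of $\mathds{1}\{i \in \mathcal B_t\}$ given $\mathscr F_t$. The derivative at $h=0+$ then equals $(n/i)^\kappa$ times the rate of $0\to 1$ transitions on $\{i\notin\mathcal B_t\}$, or minus $(n/i)^\kappa$ times the rate of $1\to0$ transitions on $\{i\in\mathcal B_t\}$. The key preliminary observation concerns the information structure. The bonsai process $(\bm\beta_t)$ is independent of the graphical construction for the bushes. The filtration $\mathscr F_t$ contains the bush process, the graphical construction on $[0,t]$, and the $\sigma_S$'s, but not the bonsais themselves. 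The $\sigma_S$'s are functions of $\mathcal B$ and of the graphical construction on satellites after time $t$, which uses marks disjoint from core vertices. Consequently, conditionally on $\mathscr F_t$, the future marks attached to $i$ and to edges into $i$ on $[t,t+h]$ remain Poisson, and the future bonsai evolution at $i$ is the two-state chain started from its conditional law $\mathbb P(\cdot\mid\mathscr F_t)$.

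On $\{i \in \mathcal B_t\}$: since $i$ is in the core, there are no death marks at $i$, so a bush at $i$ disappears only when a bonsai arrives at $i$. A bush occupies $i$, so $i\notin\bm\beta_t$, because the path definition of $\mathcal B_t$ forbids $\gamma(t)\in\bm\beta_t$. The bonsai at $i$ jumps $0\to1$ at rate $\Lambda_{\mathcal T}^2\deg(i)/(32e^3)$, independently of $\mathscr F_t$ given $\bm\beta_t(i)=0$. Hence
\[
\mathbb P(i\notin\mathcal B_{t+h}\mid\mathscr F_t)=\Lambda_{\mathcal T}^2\tfrac{\deg(i)}{32e^3}h+o(h).
\]
New births onto $i$ are irrelevant here, since $i$ is already occupied. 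Multiplying by $-(n/i)^\kappa$ gives the first formula.

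On $\{i\notin\mathcal B_t\}$: $i$ becomes a bush in $[t,t+h]$ only if an arrow from some neighbour $j\in\mathcal B_t$ fires into $i$ and $i\notin\bm\beta$ at that moment. To first order in $h$, exactly one arrow fires, which occurs with probability $\lambda_{\mathcal B}|\{j\in\mathcal B_t: j\sim i\}|h+o(h)$. Such an arrow is independent of $\mathscr F_t$ and of the bonsais. The bonsai state at $i$ changes during $[t,t+h]$ with probability $O(h)$, so the event that $i\notin\bm\beta_s$ at the arrow time differs from $\{i\notin\bm\beta_t\}$ by an event of probability $O(h)$. Multiplied by the $O(h)$ arrow probability, this contributes only $o(h)$. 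Two or more arrows also contribute $o(h)$, and $j$ remaining a bush until the arrow fires fails only with probability $O(h)$.

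The step that needs care, and which I expect to be the main obstacle, is justifying that conditioning on $\mathscr F_t$ leaves the future arrows independent while leaving the bonsai state at $i$ with conditional law $\mathbb P(i\notin\bm\beta_t\mid\mathscr F_t)$. The point to check is that the future-looking variables $\sigma_S(s)$, for $s\le t$, depend only on satellite-internal marks after $t$, never on arrows into core vertices or on bonsai marks. Once this is checked, the derivative is $\lambda_{\mathcal B}|\{j\in\mathcal B_t:j\sim i\}|\,\mathbb P(i\notin\bm\beta_t\mid\mathscr F_t)\,(n/i)^\kappa$, which is the second formula.
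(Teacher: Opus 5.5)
Your proposal is correct and is essentially the elementary jump-rate computation the paper has in mind when it omits this proof. You use the absence of core death marks for the first formula, and you isolate the one delicate point: the future-looking $\sigma_S(s)$ involve only satellite-internal marks, so conditioning on $\mathscr F_t$ does not distort the arrows into the core vertex $i$ or the bonsai chain at $i$ after time $t$. (A minor imprecision: for a satellite neighbour $j$, conditioning on $\sigma_S$ does bias the death marks at $j$, so the chance that $j$ loses its bush in $[t,t+h]$ is $o(1)$ but not uniformly $O(h)$; this is harmless because it multiplies an independent $O(h)$ arrow probability.)
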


We now estimate the probability~$\mathbb P(i \notin \bm{\beta}_t \mid \mathscr{F}_t)$. This is where we make use of the choice of not allowing the bushes to die spontaneously in the core.
\begin{lemma}\label{lem_thinning}
	For all~$i \in \mathscr C$, on the event~$\{i \notin \mathcal B_t\}$, we have
\[
	\mathbb P(i \notin \bm{\beta}_t \mid \mathscr{F}_t) \le \frac{64e^3(\log n)^{10}}{\Lambda_{\mathcal T}^2}\cdot  \frac{i}{n}. 
\]
\end{lemma}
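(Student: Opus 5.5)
The key observation is that, because bushes never die spontaneously at core vertices, the event $\{i \notin \mathcal B_t\}$ carries strong information about the bonsai at $i$. At time $0$ every vertex holds a bush ($\mathcal B_0=V$). A bush at a core vertex $i$ can disappear only when a bonsai is born at $i$. I will therefore work on $\{i\notin\mathcal B_t\}$ and let $\rho:=\sup\{s\le t: i \in \mathcal B_{s-}\ \text{or}\ \ldots\}$. More concretely, let $\rho\le t$ be the last time before $t$ at which the bush at $i$ was removed. Such a time exists because $i\in\mathcal B_0$. At time $\rho$ a bonsai is born at $i$, so $i\in\bm\beta_\rho$. After $\rho$, no bush re-enters $i$ during $(\rho,t]$. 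Re-entry is blocked exactly when a bush arrow into $i$ finds $i\in\bm\beta$. The vertex is also kept empty in the sense that any arrival would have created a bush.

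The plan is to condition on $\mathscr F_t$ and describe the conditional law of the two-state chain $(\bm\beta_s(i))_{s\in[\rho,t]}$. The information in $\mathscr F_t$ about bonsais at $i$ is the following: a bonsai birth occurred at time $\rho$, and at each time $s_1<\dots<s_k$ in $(\rho,t]$ at which a bush arrow pointed into $i$ from a bush-occupied neighbour, we had $i\in\bm\beta_{s_j}$. The filtration also records the satellite variables $\sigma_S$ and the graphical construction, but these are independent of the bonsai chain at $i$, so they add nothing beyond these constraints. Thus the chain on $[\rho,t]$ is a stationary-rate two-state Markov chain (up rate $q:=\Lambda_{\mathcal T}^2\deg(i)/(32e^3)$, down rate $1$) started at state $1$ at time $\rho$, conditioned on being in state $1$ at the finitely many times $s_j$. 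There is a subtlety here: bonsai births at $i$ during $(\rho,t]$ that do not coincide with bush removals are invisible, but since $i$ has no bush then, extra bonsai births leave $\mathcal B$ unchanged. So conditioning on "$\rho$ is the last removal time" imposes no further constraint, and the conditioning is exactly the one described.

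Next I use the Markov property at the last constraint time $s_k$ (or at $\rho$ if $k=0$). Given $\bm\beta_{s_k}(i)=1$, the future on $(s_k,t]$ is unconditioned, so
\[
\mathbb P(i\notin\bm\beta_t\mid\mathscr F_t)=\mathbb P_1(\text{chain at }0\text{ after time }t-s_k)=\frac{1}{1+q}\bigl(1-e^{-(1+q)(t-s_k)}\bigr)\le\frac1{1+q}\le\frac1q.
\]
By \eqref{eq_bounds_degrees}, $\deg(i)\ge (\log n)^{-10} n/i$, so $1/q\le 32e^3(\log n)^{10} i/(\Lambda_{\mathcal T}^2 n)$. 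This is within the stated bound, with a factor $2$ to spare that can absorb any slack, such as handling possible ties.

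The main obstacle is rigorously justifying that conditioning on $\mathscr F_t$ reduces to conditioning the bonsai chain at $i$ on "state $1$ at times $\rho,s_1,\dots,s_k$". To do this, I will write the joint density of the bonsai path at $i$ together with the $\mathscr F_t$-data as a product. One factor is the Markov-chain path density. The other factor contains indicators that the path equals $1$ at the arrow times. Because the remaining ingredients (graphical construction, bonsais at other vertices, satellite futures) are independent of the chain at $i$, the posterior is the chain density restricted to these indicators, and the Markov property then applies. Dependence on the configuration at $\rho$ enters only through $\bm\beta_\rho(i)=1$.
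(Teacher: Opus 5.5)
Your proposal is correct and follows the paper's proof essentially step for step: your $\rho$ is the paper's $\mathsf T_i$, your arrow times $s_1,\dots,s_k$ make up the paper's set $\mathcal R$, and restarting the two-state chain from state $1$ at the last of these times (or at $\rho$ when there are none) gives the bound $1/(1+q)$, as in the paper's two cases. Your likelihood-factorization paragraph only makes explicit the conditioning step that the paper asserts without detail, including why the $\sigma_S$ data carry no extra information about $\bm\beta(i)$.
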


\begin{proof}
	Fix~$t > 0$.
	Fix~$i \in \mathscr C$ with~$i \notin \mathcal B_t$. Let~$\mathsf T_i:=\sup\{s \le t: i \in \mathcal B_s\}$. We have~$\mathsf T_i > 0$, because we assume that all vertices have bushes at time~$0$, and~$\mathsf T_i < t$, because we assume~$i \notin \mathcal B_t$. More importantly, since bushes cannot spontaneously recover in the core, we know that the bush that was at~$i$ immediately before~$\mathsf T_i$ must have been displaced by a bonsai, so~$i \in \bm{\beta}_{\mathsf T_i}$.

	Define
	\[
		\mathcal R:= \left\{\vcenter{\hbox{\begin{varwidth}{0.5\linewidth}
			$s \in (\mathsf T_i,t]: \exists j \in V: j \in \mathcal B_s,\; j \sim i,$ there is a birth arrow from $(j,s)$ to $(i,s)$
\end{varwidth}}} \right\},
	\]
	that is,~$\mathcal R$ is the set of times in~$(\mathsf T_i,t]$ when a bush attempts to send a bush to~$i$. We know all these attempts are unsuccessful, because the definition of~$\mathsf T_i$ implies that~$i$ is not occupied by a  bush in the time interval~$(\mathsf T_i,t]$. Hence, we can infer that~$i$ was occupied by bonsais at all times of~$\mathcal R$. Note that~$\mathcal R$ is measurable with respect to~$\mathscr F_t$.

	We now separately treat the cases~$\mathcal R = \varnothing$ and~$\mathcal R \neq \varnothing$, starting with the former. In that case,~$\mathscr F_t$ carries no information about the behaviour of the bonsais at~$i$ between times~$\mathsf T_i$ and~$t$, apart from the fact that~$i \in \bm{\beta}_{\mathsf T_i}$. Then, conditionally on~$\mathscr F_t$, the bonsai process on~$[\mathsf T_i,t]$ evolves like a two-state Markov chain on~$\{0,1\}$ (with~$0=\text{empty}$ and~$1=\text{bonsai}$) that jumps from~$1$ to~$0$ with rate~$1$, and from~$0$ to~$1$ with rate~$\Lambda_{\mathcal T}^2 \frac{\deg(i)}{32e^3}$. Since this chain is in state~$1$ at time~$\mathsf T_i$, its distribution at time~$t$ stochastically dominates its equilibrium, so it is in state~0 at time~$t$ with (conditional) probability at most
	\[\frac{1}{\Lambda_{\mathcal T}^2 \deg(i)/(32e^3)+1} \le \frac{64e^3}{\Lambda_{\mathcal T}^2 \deg(i)} \le \frac{64e^3(\log n)^{10}}{\Lambda_{\mathcal T}^2}\cdot  \frac{i}{n},\]
	by~\eqref{eq_bounds_degrees}.

	The case~$\mathcal R \neq \varnothing$ is treated in the same way, except that the time interval~$[\mathsf T_i, t]$ is replaced by~$[\max \mathcal R,t]$. This concludes the proof.
\end{proof}

\begin{corollary}\label{cor_for_gronwall}
Defining
	\begin{equation*}
		\mathsf{d}(\mathcal B_t,i):=\frac{\Lambda_{\mathcal T}^2}{32e^3(\log n)^{10}} \cdot \left( \frac{n}{i}\right)^{1+\kappa}\text{ for } i \in \mathscr C \cap \mathcal{B}_t
	\end{equation*}
	and
	\begin{equation*}
		\mathsf{b}(\mathcal B_t, i):= \frac{64e^3\lambda_{\mathcal B}(\log n)^{10}}{\Lambda_{\mathcal T}^2} \cdot \left( \frac{i}{n}\right)^{1-\kappa} \text{ for } i \in \mathscr C \setminus \mathcal{B}_t,
	\end{equation*}
	we have
	\begin{align*}
		\sum_{i \in \mathscr C} \left.\frac{\mathrm{d}}{\mathrm{d}h} \mathbb E\big[X^{(i)}_{t+h}\mid \mathscr{F}_t \big]\right|_{h=0+} \le  -\sum_{i \in \mathscr C \cap \mathcal B_t}\mathsf d(\mathcal B_t,i) + \sum_{\substack{(i,j): i \in \mathcal B_t,\\ j \in \mathscr C \setminus \mathcal B_t,\\ j \sim i}} \mathsf b(\mathcal B_t,j).
	\end{align*}
\end{corollary}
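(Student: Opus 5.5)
The plan is to sum the two cases of Lemma~\ref{lem_derivative_core} over $i \in \mathscr C$. The core vertices carrying bushes give the negative terms, and the core vertices without bushes give the positive terms. Each case is then bounded separately using the degree bounds \eqref{eq_bounds_degrees} and Lemma~\ref{lem_thinning}.

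For $i \in \mathscr C \cap \mathcal B_t$, Lemma~\ref{lem_derivative_core} gives the derivative $-\frac{\Lambda_{\mathcal T}^2 \deg(i)}{32e^3}(n/i)^\kappa$. By the lower bound in \eqref{eq_bounds_degrees}, $\deg(i) \ge (\log n)^{-10} n/i$. Substituting this gives
\[
-\frac{\Lambda_{\mathcal T}^2}{32e^3(\log n)^{10}}\left(\frac ni\right)^{1+\kappa} = -\mathsf d(\mathcal B_t,i).
\]
This is only an upper bound on the derivative, which is the direction we need.

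For $j \in \mathscr C \setminus \mathcal B_t$, Lemma~\ref{lem_derivative_core} gives the derivative
\[
\lambda_{\mathcal B}\,|\{i \in \mathcal B_t : i\sim j\}|\,\mathbb P(j\notin\bm\beta_t\mid\mathscr F_t)\,(n/j)^\kappa .
\]
Lemma~\ref{lem_thinning} bounds the conditional probability by $\frac{64e^3(\log n)^{10}}{\Lambda_{\mathcal T}^2}\cdot\frac jn$. Multiplying this by $(n/j)^\kappa$ gives exactly $\mathsf b(\mathcal B_t,j)/\lambda_{\mathcal B}$ times $\lambda_{\mathcal B}$, so each term is at most $|\{i\in\mathcal B_t: i\sim j\}|\cdot\mathsf b(\mathcal B_t,j)$. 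Writing the neighbour count as a sum over $i$ turns this into the double sum over pairs $(i,j)$ with $i\in\mathcal B_t$, $j\in\mathscr C\setminus\mathcal B_t$ and $j\sim i$. Here $i$ may be a core or a satellite vertex, consistent with the statement.

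Adding the two cases gives the claimed inequality. There is no real obstacle; the only care needed is that Lemma~\ref{lem_thinning} holds on $\{j\notin\mathcal B_t\}$, which is exactly where it is applied. One should also note that Lemma~\ref{lem_thinning} assumes the initial condition $\mathcal B_0 = V$, which is the standing assumption of this section.
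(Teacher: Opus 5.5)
Your proof is correct and is the same argument the paper gives. You apply Lemma~\ref{lem_derivative_core} termwise, use the lower degree bound in \eqref{eq_bounds_degrees} for core vertices carrying bushes and Lemma~\ref{lem_thinning} for those without, and then rewrite each neighbour count as a sum over pairs $(i,j)$.
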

\begin{proof}
	We apply Lemma~\ref{lem_derivative_core} to each term of the sum. For~$i \in \mathcal B_t$, we bound~$\deg(i)$ using~\eqref{eq_bounds_degrees}. For~$i \notin \mathcal B_t$, we bound~$\mathbb P(i \notin \bm{\beta}_t \mid \mathscr F_t)$ using Lemma~\ref{lem_thinning}. We then rearrange the sum to obtain the desired expression.
\end{proof}

\subsubsection{Bounds for the satellites}
Let~$S$ be a satellite of~$G$. Noting that the trajectory~$t \mapsto X^{(S)}_t$ is piecewise smooth, we write
\begin{equation}\label{eq_def_X_sat}
	X^{(S)}_t - X^{(S)}_0 = \sum_{0 < s \le t} (X^{(S)}_s - X^{(S)}_{s-}) - \alpha n^\eta \int_0^t e^{\alpha (\sigma_S(s)-s)} \cdot  \mathds{1}\{ \mathcal B_s \cap S \neq \varnothing \}\;\mathrm{d}s.
\end{equation}
This trajectory has positive and negative jumps. The positive jumps occur at times~$t$ when a bush from the core gives birth to a new bush inside~$S$, making~$\sigma_S(t)$ increase. The negative jumps occur at times~$t$ when the bush process becomes empty inside~$S$, making the indicator function in~\eqref{eq_with_ind} switch from~$1$ to~$0$. We will give close attention to the positive jumps, so we define
\begin{equation*}
	Y^{(S)}_t := \sum_{0 < s \le t} (X^{(S)}_s - X^{(S)}_{s-})^+.
\end{equation*}

\begin{lemma}
	Let~$S$ be a satellite of~$G$. For each vertex~$j$ of~$S$, define
	\[\mathsf E_j := \mathbb E\left[ e^{\alpha \mathsf T_j} \right], \]
	where~$\mathsf T_j$ is the extinction time of a contact process on~$S$ with rate~$\lambda_{\mathcal B}$, started from only~$j$ infected. Then,
	\[\left.\frac{\mathrm{d}}{\mathrm{d}h} \mathbb E\big[Y^{(S)}_{t+h} \mid \mathscr F_t\big]\right|_{h=0+} \le \lambda_{\mathcal B} n^\eta \sum_{\substack{(i,j):i \in \mathscr C \cap \mathcal B_t,\\ j \in S \setminus \mathcal B_t,\\ i \sim j}}   \mathsf E_j.\]
\end{lemma}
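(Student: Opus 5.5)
The plan is to identify exactly which events in $[t,t+h]$ create a positive jump of $X^{(S)}$, bound the rate of each, and bound the conditional expected size of each jump given $\mathscr F_t$.

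\textbf{Where positive jumps come from.} By definition, $\sigma_S$ only changes when bushes enter $S$ from outside. Inside $S$, growth is already accounted for in $\sigma_S$. So a positive jump at time $s$ requires a birth arrow from some core vertex $i\in\mathcal B_{s-}$ to some $j\in S$ with $i\sim j$. Two cases arise.

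\emph{Case 1: $j\notin\mathcal B_{s-}$.} Then $j$ becomes a bush. The new contact process on $S$ started from $\mathcal B_s\cap S=(\mathcal B_{s-}\cap S)\cup\{j\}$ dies at time $s+\max\{\sigma_S(s-)-s,\;\mathsf T'_j\}$. Here $\mathsf T'_j$ is the extinction time, inside $S$, of the process started from $\{j\}$ at time $s$, read off the graphical construction on $S\times[s,\infty)$. This is the identity $T^{A\cup\{v\}}=\max\{T^A,T^{\{v\}}\}$ used in Lemma~\ref{lem_prelim_moment}. Hence the jump is at most $n^\eta\bigl(e^{\alpha\max\{\sigma_S(s-)-s,\mathsf T'_j\}}-e^{\alpha(\sigma_S(s-)-s)}\bigr)$ when $S$ already had bushes. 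When $S$ was bush-free, the jump is $n^\eta e^{\alpha\mathsf T'_j}$.

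\emph{Case 2: $j\in\mathcal B_{s-}$.} Then the configuration does not change, so there is no jump.

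This is why the sum in the statement runs only over $j\in S\setminus\mathcal B_t$.

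\textbf{Bounding the conditional jump size.} The key step is to bound the conditional expectation of the jump given $\mathscr F_{s-}$ by $n^\eta\mathsf E_j$. Apply Lemma~\ref{lem_prelim_moment}(b) with $f(x)=e^{\alpha x}$ (non-negative and non-decreasing), $A=\mathcal B_{s-}\cap S$ and $v=j$. This gives
\[
\mathbb E\bigl[e^{\alpha T^{A\cup\{j\}}}-e^{\alpha T^A}\mid T^A\bigr]\le \mathsf E_j .
\]
For $A=\varnothing$ the bound is trivial, reading $e^{\alpha T^\varnothing}$ as $0$, which matches the indicator in~\eqref{eq_with_ind}.

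The subtle point, and the main obstacle, is that $\mathscr F_{s-}$ reveals more than $T^A=\sigma_S(s-)-s$. It contains the whole past trajectory $\sigma_S(r)$ for $r<s$, and the future of the graphical construction on $S$ is not independent of this information. The route I would take is to use the coupling $\bm\zeta$ from the proof of Lemma~\ref{lem_prelim_moment}(b). Its ingredients are the marks of $H$ along $\Xi_t$ and an independent copy $H'$ elsewhere, and it is independent of the whole process started from $A$. Moreover, everything in $\mathscr F_{s-}$ about $S$'s future is a function of the process from $A$ (the $\sigma_S(r)$ for $r<s$ are extinction times of processes whose configuration at time $s$ is contained in $A$). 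So the extra jump is bounded by $f$ of the extinction time of $\bm\zeta$, which is independent of $\mathscr F_{s-}$, and its expectation is $\mathsf E_j$.

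There is one more check. The arrow's arrival at rate $\lambda_{\mathcal B}$ is independent of $\mathscr F_{s-}$, and in the bush graphical construction arrows arriving into $S$ are independent of the marks inside $S$.

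\textbf{Summing up.} For small $h$, the probability of two or more such events in $[t,t+h]$ is $O(h^2)$, and the contribution of such events is controlled by the same uniform moment bound ($\mathsf E_j<\infty$ by Lemma~\ref{lem_prelim_moment}(a) for $\alpha$ small, uniformly over graphs with at most $m$ vertices). Each pair $(i,j)$ with $i\in\mathscr C\cap\mathcal B_t$, $j\in S\setminus\mathcal B_t$, $i\sim j$ contributes rate $\lambda_{\mathcal B}$ times conditional expected jump at most $n^\eta\mathsf E_j$. Summing over pairs gives
\[
\mathbb E\bigl[Y^{(S)}_{t+h}-Y^{(S)}_t\mid\mathscr F_t\bigr]\le h\,\lambda_{\mathcal B}n^\eta\sum_{(i,j)}\mathsf E_j+o(h),
\]
which yields the claimed right derivative.
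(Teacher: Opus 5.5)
Your proposal is correct and follows the paper's proof. Positive jumps of $Y^{(S)}$ arise only from core-to-$S$ births onto empty sites, at rate $\lambda_{\mathcal B}$ per pair $(i,j)$, with conditional expected size exactly $n^\eta\mathsf E_j$ when $S$ is bush-free and at most $n^\eta\mathsf E_j$ otherwise, by Lemma~\ref{lem_prelim_moment}$\mathrm{(b)}$ with $f(x)=e^{\alpha x}$.

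Your discussion of the fact that $\mathscr F_t$ reveals more about $S\times[t,\infty)$ than just $\sigma_S(t)$ is more careful than the paper's parenthetical remark. One correction to it: the earlier values $\sigma_S(r)$ are measurable with respect to the graphical marks used by the process started from $A$, not functions of its set-valued path. So what you need is independence of $\bm{\zeta}$ from those marks, not merely from $\bm{\xi}^A$; the same coupling provides this.
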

\begin{proof}
	The process~$(Y^{(S)}_t)_t$ has a jump whenever a bush from the core sends a bush to an empty site in~$S$. For each pair of neighboring vertices~$(i,j)$ with~$i \in \mathscr C \cap \mathcal B_t$ and~$j \in S \setminus \mathcal B_t$, this happens with rate~$\lambda_{\mathcal B}$.

	For~$A \subseteq S$, let~$\Phi(A,t)$ be the smallest~$t' \ge t$ such that there is no infection path inside~$S$ from~$A \times \{t\}$ to~$S \times \{t'\}$. By definition,~$\sigma_S(t)=\Phi(\mathcal B_t,t)$. Let us also put~$\Phi(\varnothing,t)=t$.

	Let us first consider the case where~$\mathcal B_t \cap S = \varnothing$. 
	If a bush is born at~$j \in S$ immediately after~$t$, then the term corresponding to~$S$ in the definition of~$X_t$ jumps from~$0$ to~$n^\eta\exp\{\alpha (\Phi(\{j\},t)-t)\}$.  The expected size of this jump conditionally on~$\mathscr F_t$ is
	\[
		n^\eta\mathbb E[\exp\{\alpha (\Phi(\{j\},t)-t)\}\mid \mathscr F_t] =n^\eta \mathsf E_j
	\]
	(here and in the next case, it is useful to recall that, in~$\mathscr F_t$, the only information concerning the graphical construction in~$S \times [t,\infty)$ is the value of~$\sigma_S(t) = \Phi(\mathcal B_t,t)$).

	We now turn to the case where~$\mathcal B_t \cap S \neq \varnothing$.
	If a bush is born at~$j \in S \setminus \mathcal B_t$ at a time immediately after~$t$, the expected jump in the term corresponding to~$S$ in~$X_t$ given~$\mathscr F_t$ is
	\begin{align*}
		&n^\eta \mathbb E\left[\left.\exp\{\alpha(\Phi(\mathcal B_t \cup \{j\},t)-t) \} - \exp\{\alpha(\Phi(\mathcal B_t ,t)-t) \}\; \right| \; \mathscr F_t\right].
	\end{align*}
	By Lemma~\ref{lem_prelim_moment}$\mathrm{(b)}$, this is bounded by~$n^\eta\mathsf E_j$.
\end{proof}

Define
\[
	\mathsf E := \sup_{\mathsf T} \mathbb E\big[ e^{\alpha \mathsf T}\big],
\]
where the supremum is taken over all random variables~$\mathsf T$, with distribution ranging over the distribution of the extinction time of the contact process with rate~$\lambda_{\mathcal B}$ on all graphs with up to~$m$ vertices, starting from a single vertex infected. Combining the above lemma with the bounds in~\eqref{eq_bounds_degrees}, we write:
\begin{equation}\label{eq_Y_derivative}
	\left.\frac{\mathrm{d}}{\mathrm{d}h} \mathbb E\big[Y^{(S)}_{t+h} \mid \mathscr F_t\big]\right|_{h=0+}  \le \lambda_{\mathcal B} n^\eta \mathsf E \cdot  |\{(i,j): i \in \mathscr C \cap \mathcal B_t,\; j \in S \setminus \mathcal B_t,\; i \sim j\}|.
\end{equation}

\subsubsection{Verification of supermartingale property}
We now let
\begin{equation}\label{eq_def_X_sat2}
	\widetilde{X}^{(S)}_t := Y^{(S)}_t - \alpha n^\eta \int_0^t e^{\alpha (\sigma_S(s)-s)} \cdot  \mathds{1}\{ \mathcal B_s \cap S \neq \varnothing \}\;\mathrm{d}s
\end{equation}
and
\begin{equation*}
	\widetilde{X}_t :=\sum_{i \in \mathscr C} X^{(i)}_t + \sum_{S \text{ satellite}} \widetilde{X}^{(S)}_t.
\end{equation*}
By comparing~\eqref{eq_def_X_sat} and~\eqref{eq_def_X_sat2}, for all~$s \le t$ we have
\begin{equation}\label{eq_explicit_Y}
	X_t - X_s \le \widetilde{X}_t - \widetilde{X}_s.
\end{equation}

\begin{lemma}\label{lem_new_gronwall}
	For~$\eta,\kappa$ suitably chosen and~$n$ large enough, we have that for every~$t \ge 0$,
	\begin{equation*}
		\left. \frac{\mathrm{d}}{\mathrm{d}h}\mathbb E\big[ \widetilde{X}_{t+h}\mid \mathscr F_t\big]\right|_{h=0+} \le - \frac{\alpha}{2} X_t.
	\end{equation*}
\end{lemma}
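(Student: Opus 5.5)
The plan is to compute the three contributions to the drift of~$\widetilde X_t$ separately, and then show that the positive terms are absorbed by the negative ones, edge by edge, using the good-graph conditions. For the core, Corollary~\ref{cor_for_gronwall} controls the derivative of~$\sum_{i\in\mathscr C}X^{(i)}$. For each satellite~$S$, the derivative of~$\widetilde X^{(S)}$ is at most the right-hand side of~\eqref{eq_Y_derivative} minus~$\alpha n^\eta e^{\alpha(\sigma_S(t)-t)}\mathds 1\{\mathcal B_t\cap S\ne\varnothing\}=\alpha X^{(S)}_t$. Summing, the drift is at most
\[
-\sum_{i\in\mathscr C\cap\mathcal B_t}\mathsf d(\mathcal B_t,i)+\sum_{\substack{(i,j):\,i\in\mathcal B_t,\\ j\in\mathscr C\setminus\mathcal B_t,\; j\sim i}}\mathsf b(\mathcal B_t,j)+\lambda_{\mathcal B}n^\eta\mathsf E\cdot\big|\{(i,j):i\in\mathscr C\cap\mathcal B_t,\ j\notin\mathscr C,\ i\sim j\}\big|-\alpha\sum_S X^{(S)}_t .
\]
Since~$X^{(S)}_t\ge n^\eta\mathds 1\{\mathcal B_t\cap S\neq\varnothing\}$, I keep~$-\frac\alpha2X^{(S)}_t$ for the target and use the other~$-\frac\alpha2 n^\eta$ to pay for births from bushes in~$S$ into the core. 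I split the second sum according to whether the bush~$i$ is in the core or in a satellite. It then suffices to prove the following for each core~$i\in\mathcal B_t$ and each satellite~$S$ carrying bushes:
\[
\tfrac12\mathsf d(\mathcal B_t,i)\ \ge\ \tfrac\alpha2\Big(\tfrac ni\Big)^\kappa+\sum_{j\in\mathscr C,\,j\sim i}\mathsf b(\mathcal B_t,j)+\lambda_{\mathcal B}n^\eta\mathsf E\,\big|\{j\notin\mathscr C: j\sim i\}\big| ,
\qquad
\sum_{i\in S}\sum_{j\in\mathscr C,\,j\sim i}\mathsf b(\mathcal B_t,j)\le\tfrac\alpha2n^\eta .
\]
Note that~$\mathsf d(\mathcal B_t,i)\asymp(\log n)^{-10}(n/i)^{1+\kappa}$ and~$\mathsf b(\mathcal B_t,j)\lesssim(\log n)^{10}(j/n)^{1-\kappa}\le(\log n)^{10}n^{-\epsilon(1-\kappa)}$ for~$j\in\mathscr C$.

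For the core inequality, the first term is trivial since~$n/i\ge n^\epsilon$. For core-to-core births, I bound the number of neighbours by~\eqref{eq_bounds_degrees}, which gives at most~$(\log n)^{\mathsf k+10}(n/i)\,n^{-\epsilon(1-\kappa)}$. This is~$o\big((\log n)^{-10}(n/i)^{1+\kappa}\big)$ because of the polynomial factor~$n^{\epsilon(1-\kappa)}$. For core-to-satellite births with~$i<n^\epsilon$, I use~\eqref{eq_bounds_degrees} again: I need~$n^\eta(\log n)^{\mathsf k}(n/i)\ll(\log n)^{-10}(n/i)^{1+\kappa}$, which holds if~$\eta<(1-\epsilon)\kappa$. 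For~$i\ge n^\epsilon$, I use~\eqref{eq_neighbours_in_sat} with~$a:=\frac{2-\tau}{\tau-1}$. The~$\log n$ term needs~$\eta<\epsilon(1+\kappa)$. The main term needs~$n^\eta(n^\epsilon/i)^a\ll(n/i)^\kappa$ up to polylogarithms. Taking logarithms, this is linear in~$\log i$ with slope~$\kappa-a<0$ once~$\kappa<a$, so it suffices to check the endpoints~$i=n^\epsilon$ and~$i=n^{1-\epsilon}$. These give~$\eta<\kappa(1-\epsilon)$ and~$\eta<\epsilon\kappa+(1-2\epsilon)a$ respectively, both with strict polynomial room.

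I expect the satellite-to-core births to be the main obstacle, because they force~$\eta$ from below, in tension with the constraints above. A vertex~$i\in S$ has~$i>n^{1-\epsilon}$, so~$\deg(i)\le(\log n)^{\mathsf k}n^\epsilon$ by~\eqref{eq_bounds_degrees}, and~$|S|\le m$. Hence the left side of the satellite inequality is at most~$m(\log n)^{\mathsf k+10}n^{\epsilon}n^{-\epsilon(1-\kappa)}=m(\log n)^{\mathsf k+10}n^{\epsilon\kappa}$. This is at most~$\frac\alpha2n^\eta$ for large~$n$ as soon as~$\eta>\epsilon\kappa$.

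The choice of constants then has to be compatible. I take~$\kappa<a$ small and~$\eta:=2\epsilon\kappa$. Then~$\eta>\epsilon\kappa$, and~$\eta<(1-\epsilon)\kappa$ because~$\epsilon<1/8$; the remaining upper bounds are also clear. Here~\eqref{eq_choice_of_eps}, i.e.~$\epsilon<a/2$, gives~$(1-2\epsilon)a$ of order~$a$, so the right endpoint is harmless. Finally~$\alpha$ is any small constant with~$\mathsf E<\infty$, which holds by Lemma~\ref{lem_prelim_moment}$(\mathrm a)$ uniformly over graphs with at most~$m$ vertices. Multiplying this through the earlier decomposition yields the drift bound~$-\frac\alpha2X_t$ for~$n$ large.
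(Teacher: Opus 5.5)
Your proposal is correct and follows the paper's proof essentially step by step. It uses the same split into per-core-bush terms and per-satellite terms (the paper's $\mathcal S_1$ and $\mathcal S_2$), the same applications of \eqref{eq_bounds_degrees} and \eqref{eq_neighbours_in_sat}, and the same lower constraint $\eta>\epsilon\kappa$ forced by satellite-to-core births. The differences are only refinements. For $i<n^\epsilon$ you compare $n^\eta (n/i)$ with $(n/i)^{1+\kappa}$ directly, so you need only $\eta<(1-\epsilon)\kappa$ rather than the paper's lossier \eqref{eq_restrictions_constants1}. This lets you take $\kappa$ small independently of $\epsilon$ with $\eta=2\epsilon\kappa$, where the paper takes $\kappa=2\epsilon$ and $\eta\in(2\epsilon^2,\epsilon-2\epsilon^2)$.
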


Before we prove this, let us show how it allows us to prove that~$(M_t)_t$ is a supermartingale.

\begin{proof}[Proof of Lemma~\ref{lem_super}]
	Using~\eqref{eq_explicit_Y}, for~$s < t$,
	\begin{align*}
		\mathbb E[X_t - X_s \mid \mathscr F_s] &\le \mathbb E\big[\widetilde{X}_t - \widetilde{X}_s \mid \mathscr F_s \big]= \int_s^t \frac{\mathrm{d}}{\mathrm{d}u} \mathbb E\big[\widetilde{X}_u \mid \mathscr F_s \big] \;\mathrm{d}u. 
	\end{align*}
	Using the tower property of conditional expectation, and exchanging the derivative with the expectation, 
	\[
		\frac{\mathrm{d}}{\mathrm{d}u} \mathbb E\big[\widetilde{X}_u \mid \mathscr F_s \big] = \mathbb E\left[\left.\left. \frac{\mathrm{d}}{\mathrm{d}h} \mathbb E\big[ \widetilde{X}_{u+h} \mid \mathscr F_u  \big] \right|_{h=0+}\; \right| \; \mathscr F_s \right].
	\]
	By Lemma~\ref{lem_new_gronwall}, the right-hand side is smaller than~$-\frac{\alpha}{2}\mathbb E[X_u \mid \mathscr F_s]$. Then, using Gr\"onwall's inequality on the function~$[s,t] \ni u \mapsto \mathbb E[X_u \mid \mathscr{F}_s]$, we obtain
	\[
		\mathbb E[X_t \mid \mathscr F_s] \le e^{-\frac{\alpha}{2}(t-s)} X_s,
	\]
	proving that~$(M_t)_t = (e^{\frac{\alpha}{2}t} X_t)_t$ is a supermartingale.
\end{proof}

It remains to prove Lemma~\ref{lem_new_gronwall}, which is where we need to get our hands dirty (from trimming bushes!), and choose the constants~$\eta$ and~$\kappa$ in the process.

\begin{proof}[Proof of Lemma~\ref{lem_new_gronwall}]
	Take~$\alpha > 0$ small enough that~$\mathsf E < \infty$, which is possible by Lemma~\ref{lem_prelim_moment}$\mathrm{(a)}$. Using Corollary~\ref{cor_for_gronwall}, the definition~\eqref{eq_def_X_sat2}, and the bound~\eqref{eq_Y_derivative} on the conditional derivative of~$Y^{(S)}$, we bound
	\begin{equation}\label{eq_ineq_with_ds}
		\left. \frac{\mathrm{d}}{\mathrm{d}h}\mathbb E\big[ \widetilde{X}_{t+h}\mid \mathscr F_t\big]\right|_{h=0+} \le \sum_{\substack{i \in \mathscr C \cap \mathcal B_t}} \mathcal{S}_1(\mathcal B_t, i) + \sum_{S: \mathcal B_t \cap S \neq \varnothing} \mathcal S_2(\mathcal B_t, S),
	\end{equation}
	where for~$i \in \mathscr C \cap \mathcal B_t$, we define
	\[
		\mathcal S_1(\mathcal B_t, i):=-\mathsf{d}(\mathcal B_t, i) + \sum_{\substack{j \in \mathscr C \setminus \mathcal B_t,\\ j \sim i}} \mathsf{b}(\mathcal B_t, j) + \lambda_{\mathcal B}n^{\eta} \mathsf E \cdot |\{j \in V \setminus \mathscr C: j \notin \mathcal B_t,\; j \sim i\}|, 
	\]
	and for a satellite~$S$ with~$S \cap \mathcal B_t \neq \varnothing$, we define
	\[
		\mathcal S_2(\mathcal B_t, S):= - \alpha n^\eta e^{\alpha(\sigma_S(t)-t)} +  \sum_{\substack{i \in S \cap \mathcal B_t }}\; \sum_{\substack{j \in \mathscr C \setminus \mathcal B_t,\\ j \sim i}} \mathsf{b}(\mathcal B_t, j).
	\]
	
The desired result will follow from proving that, for suitable choices of~$\eta,\kappa$, we have
	\begin{equation}\label{eq_S1_end_bound}
		\mathcal S_1(\mathcal B_t,i) \le - \frac{\alpha}{2} \left( \frac{n}{i}\right)^\kappa \quad\text{for all } i \in \mathscr C \cap  \mathcal B_t
	\end{equation}
	and
	\begin{equation}\label{eq_S2_end_bound}
		\mathcal S_2(\mathcal B_t,S) \le -\frac{\alpha}{2} n^{\eta} e^{\alpha(\sigma_S(t)-t) }  \quad \text{for all satellites } S \text{ with } S \cap \mathcal B_t \neq \varnothing.
	\end{equation}
	
	We will collect conditions on the small positive constants~$\eta$ and~$\kappa$ as we progress, and in the end we will make sure they can all be mutually satisfied.

	Let us prove~\eqref{eq_S1_end_bound}. For~$i \in \mathscr C \cap \mathcal B_t$,
	\begin{align*}
		\sum_{\substack{j \in \mathscr C \setminus \mathcal B_t,\\ j \sim i}} \mathsf b(\mathcal B_t,j) \le \frac{64e^3\lambda_{\mathcal B}(\log n)^{10}}{\Lambda_{\mathcal T}^2 } \sum_{\substack{j \in \mathscr C,\\ j \sim i}} \left( \frac{j}{n}\right)^{1-\kappa}.
	\end{align*}
	
	By bounding~$j/n \le 1$, the sum on the right-hand side is at most~$\deg(i)$, which we bound by~$(\log n)^{\mathsf k} \cdot n/i$ (using~\eqref{eq_bounds_degrees}). Next, using~\eqref{eq_bounds_degrees} for~$i \le n^\epsilon$ and~\eqref{eq_neighbours_in_sat} for~$i \ge n^\epsilon$, we bound
	\[
|\{j \in V \setminus \mathscr C: j \notin \mathcal B_t,\; j \sim i\}| \le \log n+(\log n)^{\mathsf k+\mathsf k'} \cdot \left(1 \wedge \left(\frac{n^\epsilon}{i} \right)^{\frac{2-\tau}{\tau-1}}\right) \cdot \frac{n}{i}.
	\]
	
	This shows that~$\mathcal S_1(\mathcal B_t,i)$ is at most
	\begin{align}
		\nonumber&- \frac{\Lambda_{\mathcal T}^2}{32e^3(\log n)^{10}} \cdot \left( \frac{n}{i}\right)^{1+\kappa} + \frac{64e^3\lambda_{\mathcal B}(\log n)^{10+\mathsf k}}{\Lambda_{\mathcal T}^2 } \cdot \frac{n}{i} + \lambda_{\mathcal B} n^\eta \mathsf E \log n\\[.2cm]
		\nonumber&\qquad + \lambda_\mathcal{B} (\log n)^{\mathsf k+\mathsf k'} \mathsf E \cdot \left(1 \wedge \left(\frac{n^\epsilon}{i} \right)^{\frac{2-\tau}{\tau-1}}\right)\cdot \frac{n^{1+\eta}}{i}\\[.2cm]
		&\le - \frac{\Lambda_{\mathcal T}^2}{64e^3(\log n)^{10}} \cdot \left( \frac{n}{i}\right)^{1+\kappa} + \lambda_\mathcal{B} (\log n)^{\mathsf k+\mathsf k'} \mathsf E \cdot \left(1 \wedge \left(\frac{n^\epsilon}{i} \right)^{\frac{2-\tau}{\tau-1}}\right)\cdot \frac{n^{1+\eta}}{i},\label{eq_expression_wedge}
	\end{align}
	the inequality being true for~$n$ large enough, provided that~$\eta < \epsilon(1+\kappa)$.
	
	We now consider two cases.
	\begin{itemize}
		\item In case~$i \le n^\epsilon$, by bounding~$\frac{n}{i} \ge n^{1-\epsilon}$,~$\frac{n^\epsilon}{i} \ge 1$ and~$\frac{n^{1+\eta}}{i} \le n^{1+\eta}$, the expression in~\eqref{eq_expression_wedge} is
	\[
		\le - \frac{\Lambda_{\mathcal T}^2}{64e^3(\log n)^{10}} \cdot n^{(1-\epsilon)(1+\kappa)} + \lambda_{\mathcal B} (\log n)^{\mathsf k+\mathsf k'} \mathsf E \cdot {n^{1+\eta}}.
	\]
	Now, assuming that
\begin{equation}
	\label{eq_restrictions_constants1}
	(1-\epsilon)(1+\kappa) > 1 + \eta,
\end{equation}
			and that~$n$ is large enough, the absolute value of the term with~$n^{(1-\epsilon)(1+\kappa)}$ is much larger than that of the term with~$n^{1+\eta}$. Also using~$(1-\epsilon)(1+\kappa) > \kappa$ (since~$\epsilon < 1/8$ and~$\kappa < 1$), we obtain
			\[
				\mathcal S_1(\mathcal B_t, i) \le -\frac{\alpha}{2} n^\kappa \le -\frac{\alpha}{2}\left(\frac{n}{i}\right)^\kappa.
			\]
		\item If~$n^\epsilon \le i \le n^{1-\epsilon}$, then the last expression in~\eqref{eq_expression_wedge} is equal to
			\begin{equation}\label{eq_other_S1}
				- \frac{\Lambda_{\mathcal T}^2}{64e^3(\log n)^{10}} \cdot \left( \frac{n}{i}\right)^{1+\kappa} + \lambda_\mathcal{B} (\log n)^{\mathsf k+\mathsf k'} \mathsf E \cdot \frac{n^{1+\eta+\frac{2-\tau}{\tau - 1}\epsilon}}{i^{1+\frac{2-\tau}{\tau-1}}}
			\end{equation}
			Writing~$a:=\frac{2-\tau}{\tau-1} > 0$, we now want~$(\frac{n}{i})^{1+\kappa} \gg \frac{n^{1+\eta+a\epsilon}}{i^{1+a}}$, that is,~$ i^{a-\kappa} \gg n^{{\eta+a\epsilon - \kappa}}$. Assuming that
			\begin{equation}\label{eq_restrictions_constants2}
				\kappa < a= \frac{2-\tau}{\tau-1} \qquad \text{and}\qquad \eta < \kappa(1-\epsilon),
			\end{equation}
and using~$i \ge n^\epsilon$, we have
			\[ i^{a-\kappa} \ge n^{\epsilon(a-\kappa)}\gg n^{\eta +a\epsilon-\kappa}.\]
 Then, the negative term in~\eqref{eq_other_S1} has much larger magnitude than the positive one, so, also using~$n/i \ge n^\epsilon$,
			\[\mathcal S_1(\mathcal B_t, i) \le - \frac{\Lambda_{\mathcal T}^2}{128e^3(\log n)^{10}} \cdot \left( \frac{n}{i}\right)^{1+\kappa}  \le -\frac{\alpha}{2} \left(\frac{n}{i}\right)^\kappa.
			\]
\end{itemize}

This completes the proof of~\eqref{eq_S1_end_bound}.

We now turn to~\eqref{eq_S2_end_bound}.
Fix a satellite~$S$ and~$i \in S$. We bound
\begin{align*}
	\sum_{\substack{j \in\mathscr C \setminus \mathcal B_t,\\ j \sim i}} \mathsf b(\mathcal B_t,j) \le \frac{64e^3\lambda_{\mathcal B}(\log n)^{10}}{\Lambda_{\mathcal T}^2} \sum_{\substack{j \in\mathscr C,\\ j \sim i}} \left( \frac{j}{n}\right)^{1-\kappa}.
\end{align*}

We bound~$\deg(i)\le (\log n)^{\mathsf k} n^\epsilon$ by~\eqref{eq_bounds_degrees}, and~$\frac{j}{n} \le n^{-\epsilon}$ for~$j \in \mathscr C$. The sum on the right-hand side above is then at most~$(\log n)^{\mathsf k} n^{\epsilon \kappa}$. Recalling that there are at most~$m$ vertices in any satellite, we obtain
\[\sum_{i \in S \cap \mathcal B_t}\sum_{\substack{j \in\mathscr C \setminus \mathcal B_t,\\ j \sim i}} \mathsf b(\mathcal B_t,j) \le \frac{64e^3\lambda_{\mathcal B} m (\log n)^{10+\mathsf k}}{\Lambda_{\mathcal T}^2} n^{\epsilon \kappa}.\]

Assuming that
\begin{equation}\label{eq_restrictions_constants3}
\eta > \epsilon \kappa,
\end{equation}
we obtain, using~$\sigma_S(t) \ge t$,
\[
 \sum_{i \in S \cap \mathcal B_t}\sum_{\substack{j \in \mathscr C \setminus \mathcal B_t,\\ j \sim i}} \mathsf{b}(\mathcal B_t, j)\ll \alpha n^\eta e^{\alpha(\sigma_S(t)-t)},
\]
so
\[
	\mathcal S_2(\mathcal B_t, S) \le -\frac{\alpha}{2}n^\eta e^{\alpha(\sigma_S(t)-t)}.
\]

Together with~$0 < \kappa < 1$ and~$\eta < \epsilon(1+\kappa)$, the constraints imposed by~\eqref{eq_restrictions_constants1},~\eqref{eq_restrictions_constants2}, and~\eqref{eq_restrictions_constants3} are, in summary:
\[0 < \kappa < \min\left\{1,\frac{2-\tau}{\tau-1}\right\},\quad \epsilon \kappa < \eta < \min\{\epsilon(1+\kappa), \kappa-\epsilon-\kappa \epsilon\}.\]

Recalling~\eqref{eq_choice_of_eps}, we can then take~$\kappa=2\epsilon$ and~$\eta \in (2\epsilon^2,\epsilon-2\epsilon^2)$ to satisfy both.
\end{proof}

\section{No coexistence: the case~$\lambda_{\mathcal B} \le \lambda_{\mathcal T}$}\label{s_second_no_coex}
In this section, we prove Theorem~\ref{thm_no_coex_lambdas} on displacement in polynomial time when $\lambda_{\mathcal B} \le \lambda_{\mathcal T}$. Given that Theorem~\ref{thm_main_heavy} already covers the case~$\tau \in (1,2)$ (even without the assumption that~$\lambda_{\mathcal B} \le \lambda_{\mathcal T}$), we assume throughout this section that~$\tau \ge 2$.

\subsection{Graph conditions}
As in Section~\ref{s_no_coex}, we identify good graph conditions that happen with high probability, and that help in proving that there is no coexistence of bushes and trees.

\begin{lemma}\label{lem_diameter}
There exists~$C > 0$ such that with high probability as~$n \to \infty$, the diameter of~$G_n$ is smaller than~$C\log n$.
\end{lemma}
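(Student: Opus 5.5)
The first thing I would use is that erasing does not change graph distances. Deleting self-loops and merging parallel edges leaves the adjacency relation between distinct vertices unchanged. So the diameter of $G_n$ equals the diameter of the underlying configuration multigraph, and it is enough to prove the bound there. The argument will only use that every vertex has at least $3$ stubs (from \eqref{eq_condition_stubcount}) and that the total stub count satisfies $L_n:=\sum_i D_i\ge 3n$. I would fix the stub-counts and work with the uniform matching, revealed sequentially. The target is: for each pair $u,v$, the probability that $\mathrm{dist}(u,v)>C\log n$ is $o(n^{-2})$. A union bound over pairs then finishes.

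\textbf{Step 1: early exploration.} From a vertex $u$, I explore the ball breadth-first, pairing unmatched stubs one at a time. Call a pairing a \emph{collision} if the partner stub belongs to an already discovered vertex. Fix $a\in(0,1/2)$. While at most $n^a$ stubs have been revealed, each pairing is a collision with conditional probability at most $2n^a/L_n\le n^{a-1}$. Hence the probability of at least $k$ collisions before $n^a$ stubs are revealed is at most $\binom{n^a}{k}n^{k(a-1)}\le n^{k(2a-1)}$. Choosing $k$ with $k(1-2a)>3$ makes this $o(n^{-3})$.

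On the complementary event the explored ball is a tree up to at most $k$ extra edges. Since every vertex has degree at least $3$, each non-collision step adds a vertex with at least $2$ fresh stubs. The standard argument for minimum degree $3$ then gives the following: after $O(k)$ generations, the number of unmatched boundary stubs at least doubles per generation. Each collision can destroy at most two boundary stubs, and in a tree with minimum degree $3$ the boundary grows geometrically. Consequently the boundary reaches $n^a$ stubs within $C_1\log n$ generations.

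\textbf{Step 2: middle growth.} From $n^a$ boundary stubs I continue the exploration until $\varepsilon L_n$ stubs have been revealed, for a small fixed $\varepsilon$. At each generation, each of the current $N\ge n^a$ boundary stubs pairs to a fresh vertex with probability at least $1-2\varepsilon$, even conditioning on the past. Each fresh vertex contributes at least $2$ new stubs. A Chernoff bound (stochastic domination by a binomial) gives at least $\tfrac32 N$ new boundary stubs, except with probability $e^{-cn^a}$. So within a further $C_2\log n$ generations, $u$ has an unmatched boundary of order $\varepsilon L_n$ stubs, except with probability $o(n^{-3})$.

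\textbf{Step 3: connecting two balls.} I run the same exploration from $v$, disjointly; if the two explorations meet earlier we are already done. This produces two disjoint sets of unmatched stubs, each of size at least $c\,\varepsilon L_n$. In the uniform matching of the remaining stubs, the probability that no stub of the first set is paired with the second is at most $(1-c\varepsilon)^{c\varepsilon L_n/2}=e^{-c'n}$.

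This yields $\mathrm{dist}(u,v)\le 2(C_1+C_2)\log n+1$ with failure probability $o(n^{-2})$, and the union bound over the $n^2$ pairs proves the lemma. For the conditioning on $\sum_i D_i$ being even, the observation in the configuration model subsection suffices, since that conditioning preserves high-probability events. The main obstacle is Step 1: obtaining geometric growth from a single vertex despite a bounded number of collisions, uniformly over all starting vertices. I would handle it by the $k$-collision bound above together with the minimum-degree-$3$ tree comparison.
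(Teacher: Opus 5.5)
\textbf{There is a genuine gap in Step~1, and it also breaks your union bound over pairs.}

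You claim that on the event of fewer than $k$ collisions, minimum degree $3$ forces the boundary to reach $n^a$ stubs. This is false once two collisions are permitted, and your choice $k(1-2a)>3$ permits at least three. Collisions that occur while the boundary is still small can exhaust it and close off the component. For example, a vertex with four stubs paired into two self-loops is isolated. Likewise, two degree-$3$ vertices, each carrying a self-loop and joined by an edge, form a closed component. Each of these uses only two collisions. Only a single collision is harmless: with no prior collisions the boundary is always at least $3$, so one collision leaves at least one boundary stub, and growth resumes. This is why the standard argument needs tree excess at most $1$.

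Moreover, these small closed configurations have probability of order $n^{-2}$ when $\tau>2$. For instance, if $p(4)>0$, a given degree-$4$ vertex is isolated with probability about $3/L_n^2$. So your target $\mathbb P(\mathrm{dist}(u,v)>C\log n)=o(n^{-2})$ for every pair is false in general. Summing over the $n^2$ pairs cannot give $o(1)$, however Step~1 is refined.

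\textbf{Repair and comparison.} The standard fix changes where the union bound is taken.
\begin{itemize}
\item Allow at most one collision before $n^a$ stubs are attached to discovered vertices. Two collisions occur with probability at most $\binom{n^a}{2}(2n^a/L_n)^2=O(n^{4a-2})$, which is $o(n^{-1})$ for $a<1/4$.
\item Take a union bound over \emph{vertices} to show that with high probability the early phase succeeds from every vertex. State this as a graph property, e.g.\ of balls of radius $C_1\log n$, so you can invoke it inside the joint exploration from $u$ and $v$.
\item Union-bound only Steps~2--3 over pairs; their failure probabilities are $e^{-cn^a}$ and $e^{-c'n}$.
\end{itemize}
There are two smaller points.
\begin{itemize}
\item With hubs present, the collision probability is governed by all stubs attached to discovered vertices, since a discovered hub carries many unpaired stubs. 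So ``revealed'' must mean this, not the number of pairings performed.
\item The count $C_2\log n$ in Step~2 needs $L_n\le n^{O(1)}$. This holds with high probability here (e.g.\ on $\mathcal A_n$) but does not follow from minimum degree $3$ alone.
\end{itemize}
For comparison, the paper never explores the heavy-tailed graph directly. It splits each vertex with $D_i=3a_i+b_i$ into $a_i$ vertices of degree $3$ and one of degree $b_i\in\{3,4,5\}$. On $\mathcal A_n$ this gives a configuration model with degrees in $\{3,4,5\}$ on at most $n^k$ vertices. It then applies the bounded-degree diameter bound of Lemma~\ref{lem_new_cm}, noting that re-gluing can only shorten distances. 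Your repaired argument amounts to running that standard bounded-degree proof, including the excess-at-most-one step you are missing, directly on the unsplit graph, with hubs handled by hand.
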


\begin{lemma}\label{lem_max_degree}
	There exists~$\gamma > 0$ such that with high probability as~$n \to \infty$,~$G_n$ has a vertex of degree larger than $n^\gamma$.
\end{lemma}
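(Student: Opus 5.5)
The statement to prove is Lemma~\ref{lem_max_degree}: with high probability $G_n$ has a vertex of degree larger than $n^\gamma$. Since $\tau\ge 2$ is the relevant regime (though the argument works for any $\tau>1$), the plan is to first produce a vertex with a huge stub-count, and then show that erasure does not destroy most of its stubs.

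\emph{Step 1: a large stub-count.} Since $p(k)\sim\beta k^{-\tau}$, we have $\mathbb P(D_1\ge x)\ge c x^{1-\tau}$ for large $x$. Fix $\gamma_0<1/(\tau-1)$. Then
\[
\mathbb P\big(\max_i D_i< n^{\gamma_0}\big)\le \big(1-c n^{-\gamma_0(\tau-1)}\big)^n\le \exp\{-c n^{1-\gamma_0(\tau-1)}\}\to 0 .
\]
The conditioning on an even sum costs at most a factor of about $2$, so with high probability some vertex $v^*$ has $D_{v^*}\ge n^{\gamma_0}$. We also fix $\gamma_0$ small enough, for instance $\gamma_0<1/(2\tau)$, so that $D_{v^*}$ is small compared with the total stub count $L_n=\sum_i D_i$. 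Indeed $L_n\ge 3n$ always holds, because every stub-count is at least $3$.

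\emph{Step 2: erasure keeps many distinct neighbours.} We will pair the stubs of $v^*$ sequentially. We only examine the first $k=\lfloor n^{\gamma_0}\rfloor$ of them. When one of these stubs is paired, it is matched to a uniform unmatched stub among at least $L_n-2k\ge 2n$ remaining ones. Call a stub \emph{bad} if its partner belongs to $v^*$ itself or to a vertex already hit by an earlier stub of $v^*$. To make this manageable, we also condition on the high-probability event that $\max_i D_i\le n^{2/(\tau-1)}$ and that few vertices have large degree. Then the number of bad stubs can be bounded in expectation by
\[
\sum_{r\le k}\frac{D_{v^*}+\sum_{u\text{ hit before } r} D_u}{2n}.
\]

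\emph{The main obstacle} is that the hit vertices are size-biased, so their stub-counts have heavy tails. A cleaner route avoids this. We restrict to partners among the low-degree vertices $u$ with $D_u\le M$ for a large constant $M$. For $\tau\ge 2$, these carry a positive fraction of all stubs: the total mass $\sum_i D_i\mathds 1\{D_i>M\}$ is at most $\epsilon L_n$ with high probability when $\tau>2$. For $\tau=2$ the relevant fraction tends to zero only logarithmically, which we handle by taking $M=n^{\delta}$ instead. Each of the $k$ stubs then hits a new low-degree vertex with conditional probability bounded below by a constant, up to a correction of order $kM/n=o(1)$ for collisions. So the number of distinct neighbours of $v^*$ stochastically dominates a $\mathrm{Bin}(k,c)$ variable, and a Chernoff bound gives $\deg(v^*)\ge ck/2\ge n^{\gamma}$ for any $\gamma<\gamma_0$.

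For $\tau=2$, where the low-degree stubs do not make up a constant fraction of $L_n$, we instead use the truncation $M=n^\delta$. The probability of hitting a fresh vertex of degree at most $n^\delta$ is then at least of order $1/\log n$, which still gives $\deg(v^*)\ge n^{\gamma_0}/\log^2 n$ with high probability after a Chernoff bound.
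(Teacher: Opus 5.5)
Your argument is correct, but it follows a different route from the paper.

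\textbf{The paper's route.} For $\tau>2$ it does no pairing computation. It takes a vertex whose stub-count is a power of $n$ (from the event $\mathcal A_n$). It then cites a result from the literature saying that the largest degree of the erased graph is asymptotic to its stub-count. For $\tau=2$ it reruns the proof of Lemma~\ref{lem_event_Bn}, with an extra $\log n$ in the bound on $L_n$. This shows that the top vertex is adjacent to every $j\le n/(\log n)^K$.

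\textbf{Your route.} You give a single self-contained argument for all $\tau\ge 2$. You pair polynomially many stubs of a high-stub vertex one at a time. You then dominate the number of distinct low-degree partners by a binomial and apply Chernoff.

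\textbf{What each buys.} The paper obtains much stronger exponents: essentially $1/(\tau-1)$ for $\tau>2$, and $1-o(1)$ for $\tau=2$. The cost is an external citation and a separate dense-core argument. Yours yields only some small $\gamma<\gamma_0$. That is all Theorem~\ref{thm_no_coex_lambdas} uses, since it only needs $n\exp(-c\,n^{\gamma})\to 0$.

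\textbf{Clean-ups to your write-up.}
\begin{itemize}
\item The truncation $M=n^\delta$ for $\tau=2$ is unnecessary. With a large constant $M$, at least $n/2$ vertices have stub-count at most $M$, and they carry at least $3n/2$ stubs. Meanwhile $L_n\le n(\log n)^{C}$ with high probability, for instance on $\mathcal A_n$. So each pairing step hits a fresh low-degree vertex with probability at least $c(\log n)^{-C}$, which still gives a polynomial number of successes.
\item If you do keep $M=n^\delta$, note that these stubs actually carry a fraction of order $\delta$ of $L_n$ when $\tau=2$. Your $1/\log n$ is an underestimate.
\item Earlier stubs of $v^*$ may be matched to later ones (self-loops). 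So pair ``the next unmatched stub of $v^*$'' at each step, and take $k=\lfloor n^{\gamma_0}/2\rfloor$ steps so that you never run out.
\item The conditioning on $\max_i D_i\le n^{2/(\tau-1)}$ plays no role and can be dropped. The same holds for the requirement that $D_{v^*}$ be small relative to $L_n$: self-loop partners are simply failures.
\end{itemize}
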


The statements of the two lemmas (and in fact much more precise results) are well known in the literature of the configuration model (with no erasing) with~$\tau > 2$.  We have two issues: the facts that we take the erased version of the model (which has no effect on the diameter, but does affect the maximum degree) and that we also need the case~$\tau = 2$. In Section~\ref{s_proof_graph_2}, we give sketches of proofs of the lemmas, referring to the literature when possible.

\subsection{Proof of extinction of bushes}
\begin{proof}[Proof of Theorem~\ref{thm_no_coex_lambdas}]
Since the trees evolve autonomously, the grass-bushes-trees process on an arbitrary graph with an arbitrary initial configuration satisfies, for any~$t$,
\[
	\lambda_\mathcal{B} \mapsto \mathbb P(\mathcal T_t \neq \varnothing, \mathcal B_t \neq \varnothing) \text{ is non-decreasing.}
\]
For this reason, it is sufficient to prove the theorem under the assumption that~$\lambda_{\mathcal B}=\lambda_{\mathcal T}=:\lambda$. This has the advantage that we can construct the process with a single graphical construction with parameter~$\lambda$, governing the behaviour of both bushes and trees.

By Lemma~\ref{lem_connectedness}, Lemma~\ref{lem_diameter}, and Lemma~\ref{lem_max_degree}, there exist~$\gamma > 0$ and~$C > 0$ such that~$\mathbb P(G_n \in \mathcal G_n) \xrightarrow{n \to \infty} 1$, where~$\mathcal G_n$ is the set of graphs with~$n$ vertices that are connected, have diameter smaller than~$C \log n$, and have a vertex of degree higher than~$n^\gamma$.

Let~$G$ be a graph in~$\mathcal G_n$, and let~$v^*$ be a vertex of~$G$ with~$\deg(v^*) \ge n^\gamma$. Let~$T:=n^k$, where~$k$ is a large constant to be chosen later.

We assume a graphical construction for the contact process with parameter~$\lambda$ on~$G$ is given. We will denote by~$\bm{\xi}_t^A$ the contact process on~$G$ started from~$\mathds{1}_A$ (and write~$\bm{\xi}^x_t$ instead of~$\bm{\xi}^{\{x\}}_t$). Later we will switch to the grass-bushes-trees process (denoted~$(\mathcal T_t,\mathcal B_t)_t$, with some specified initial configuration) under this same graphical construction. Although the contact process and the tree process are actually the same, it will be useful to keep both notations~$\bm{\xi}$ and~$\mathcal T$, and only use the latter when we are considering trees jointly with bushes. 

For each vertex~$x$ of~$G$, define the event
\[
	E_x := \left\{ \text{either }\bm{\xi}^{x}_T = \varnothing \text{ or } \int_{T/3}^{2T/3} \bm{\xi}^{x}_t(v^*)\;\mathrm{d}t > \frac{T}{6} \right\}.
\]
For each vertex~$x$, define the dual process
\[
	\hat{\bm{\xi}}^{(x,T)}_t(y):= \mathds{1}\{(y,T-t) \rightsquigarrow (x,T)\},\quad y \in G,\; t \in [0,T]
\]
and define the event
\[
	E_x':= \left\{ \text{either } \hat{\bm{\xi}}^{(x,T)}_T = \varnothing \text{ or }\int_{T/3}^{2T/3} \hat{\bm{\xi}}^{(x,T)}_t(v^*)\;\mathrm{d}t > \frac{T}{6} \right\}.
\]

Now fix some arbitrary initial configuration~$(\mathcal T_0,\mathcal B_0)$ for the grass-bushes-trees process. Note that
\begin{equation}\label{eq_union_bt}
	\mathcal T_t = \bm{\xi}^{\mathcal T_0}_t \quad \text{and}\quad \mathcal B_t = \bm{\xi}^{\mathcal B_0}_t \setminus \bm{\xi}^{\mathcal T_0}_t.
\end{equation}
We claim that
\begin{equation}\label{eq_claim_bushes}
	\{\mathcal T_T \neq \varnothing\} \cap \bigcap_{x \in G} (E_x \cap E_x') \subseteq \{\mathcal T_T \neq \varnothing,\; \mathcal B_T = \varnothing\}.
\end{equation}
To see this, assume that the event on the left-hand side occurs.  Now let~$y \in G$; let us prove that~$y \notin \mathcal B_T$. If~$\bm{\xi}^G_T(y)=0$, then by~\eqref{eq_union_bt} we have~$\mathds{1}\{y \in \mathcal B_T\} \le \bm{\xi}^{\mathcal B_0}_T(y) \le \bm{\xi}^G_T(y)=0$. Assume on the other hand that~$\bm{\xi}^G_T(y)=1$. This is equivalent to~$\hat{\bm{\xi}}^{(y,T)}_T \neq \varnothing$; since~$E_y^\prime$ occurs by assumption, we then have
	\begin{equation}\label{eq_integral1}
\int_{T/3}^{2T/3} \hat{\bm{\xi}}_t^{(y,T)}(v^*)\;\mathrm{d}t > \frac{T}{6}. 
	\end{equation}
Since~$\mathcal T_T \neq \varnothing$ and we have~$\mathcal T_T = \cup_{x \in \mathcal T_0} \bm{\xi}^x_T$, there exists~$x \in \mathcal T_0$ such that~$\bm{\xi}^x_T \neq \varnothing$. Since~$E_x$ occurs by assumption, we have
	\begin{equation}\label{eq_integral2}
	\int_{T/3}^{2T/3} \bm{\xi}^x_t(v^*)\;\mathrm{d}t > \frac{T}{6}.
	\end{equation}
	Now,~\eqref{eq_integral1} and~\eqref{eq_integral2} imply that there is~$t \in [T/3,2T/3]$ such that~$(x,0) \rightsquigarrow (v^*,t) \rightsquigarrow (y,T)$, which gives~$y \in \mathcal T_T$, so~$y \notin \mathcal B_T$. This finishes the proof of~\eqref{eq_claim_bushes}.

From~\eqref{eq_claim_bushes}, it follows that
\[
	\mathbb P(\mathcal T_T \neq \varnothing,\; \mathcal B_T \neq \varnothing) \le \sum_{x\in G} (\mathbb P(E_x^\complement) + \mathbb P((E_x')^\complement)) = 2 \sum_{x\in G} \mathbb P(E_x^\complement),
\]
since~$\mathbb P(E_x) = \mathbb P(E_x')$ by self-duality. In order to bound~$\mathbb P(E_x^\complement)$, let~$S^*$ be the star graph made of~$v^*$ and its neighbours, and define
\[
	\sigma_x:=\inf\{t > 0: S^* \text{ is lit in } \bm{\xi}^x_t \}.
\]
We then bound
\[
	\mathbb P(E_x^\complement) \le \mathbb P\left(\bm{\xi}^x_{T/3} \neq \varnothing,\; \sigma_x > \frac{T}{3}\right) +  \mathbb P\left(\left. \int_{T/3}^{2T/3} \bm{\xi}^x_t(v^*)\;\mathrm{d}t \le \frac{T}{6}\; \right| \;\sigma_x \le T/3\right).
\]
For~$n$ large, apply the integer-interval occupation estimate in the proof of Lemma~\ref{lem_all_stars} after time~$\sigma_x$. On~$\{\sigma_x\le T/3\}$, the complete intervals of the form~$[\sigma_x+k,\sigma_x+k+1]$ contained in~$[T/3,2T/3]$ have total length at least~$T/3-2$. The estimate gives occupation time at least~$\frac34(T/3-2)>T/6$ for~$T>18$. Hence, by the strong Markov property at~$\sigma_x$,
\[
\mathbb P\left(\left. \int_{T/3}^{2T/3} \bm{\xi}^x_t(v^*)\;\mathrm{d}t \le \frac{T}{6}\; \right| \;\sigma_x \le \frac{T}{3}\right) \le \exp(-c_{\mathrm{star}}(\lambda^2 \wedge 1)n^\gamma).
\]
Next, by the assumption on the diameter of the graph and Lemma~\ref{lem_all_stars}, we have
\[\mathbb P(\sigma_x \le t + \lceil C \log n \rceil + 1 \mid \bm{\xi}^x_t \neq \varnothing) \ge  (e^{-1} (1-e^{-\lambda}))^{\lceil C \log n\rceil}\cdot c_{\mathrm{star}}(\lambda \wedge 1)=:q.\]
The expression for~$q$ is a lower bound for the probability that an infection path is formed from anywhere in the graph at time~$t$ to~$v^*$ within~$\lceil C \log n \rceil$ time units, and  that additionally,~$S^*$ becomes lit in an additional time unit. 
Iterating this gives
\[
	\mathbb P\left(\bm{\xi}^x_{T/3} \neq \varnothing,\; \sigma_x > \frac{T}{3}\right) \le (1-q)^{\lfloor (T/3)/(\lceil C \log n \rceil + 1) \rfloor} \le \exp\left\{ - \frac{qT}{6C\log n}\right\}
\]
Recalling that~$T=n^k$, we can now choose~$k$ large enough so that the right-hand side is smaller than~$e^{-n}$ for~$n$ large enough.

We have thus shown that for any~$G \in \mathcal G_n$, and any~$(\mathcal T_0,\mathcal B_0)$, we have
\[
	\mathbb P(\mathcal T_T \neq \varnothing, \; \mathcal B_T \neq \varnothing) \le 2n\left[\exp(-c_{\mathrm{star}}(\lambda^2\wedge 1)n^\gamma)+ \exp(-n)\right].\qedhere
\]
\end{proof}

\section{Coexistence}\label{s_coex}
The goal of this section is to prove Theorem~\ref{thm_coexistence} which, unlike the previous two theorems, shows that the bushes can survive exponentially long. 
Throughout this section, we assume that~$\tau > 2$.

\subsection{Graph conditions}

Given a graph~$G=(V,E)$ and disjoint sets of vertices~$A,B \subset V$, we denote by~$e(A:B)$ the number of edges having one extremity in~$A$ and the other in~$B$.

\begin{lemma}\label{lem_small_complements}
For any~$\epsilon > 0$, there exists~$\eta > 0$ such that with high probability as~$n \to \infty$,~$G_n$ is such that
	\[
		 e\big(A:A^\complement\big) \le \epsilon n \quad \text{for all $A \subset G_n$ with $|A| \le \eta n$}.
	\]
\end{lemma}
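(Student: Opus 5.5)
The plan is to bound $e(A:A^\complement)$ by the total degree of $A$, and then to control the total degree of any small set by the total degree of the vertices with the largest stub-counts. Since $G_n$ is the erased configuration model, $\deg(i)\le D_i$ for every $i$. Therefore
\[
e(A:A^\complement)\le \sum_{i\in A}\deg(i)\le \sum_{i\in A}D_i \le \sum_{i=1}^{\lfloor \eta n\rfloor} D_{(i)},
\]
where $D_{(1)}\ge D_{(2)}\ge\cdots$ are the stub-counts in decreasing order. It then suffices to show that, for a suitable $\eta$, with high probability the sum of the $\lfloor\eta n\rfloor$ largest stub-counts is at most $\epsilon n$. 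The bound is deterministic in the graph once the $D_i$ are fixed, so it holds simultaneously for all $A$, and no union bound over sets is needed.

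To control the top order statistics, I use that $\tau>2$ implies $\mathbb E[D]<\infty$. Fix $\epsilon>0$ and choose a threshold $K$ with $\mathbb E[D\mathds 1\{D>K\}]<\epsilon/4$. Split
\[
\sum_{i=1}^{\lfloor\eta n\rfloor}D_{(i)}\le K\eta n+\sum_{i=1}^n D_i\mathds 1\{D_i>K\}.
\]
By the weak law of large numbers, the second term is at most $(\epsilon/2) n$ with high probability. Taking $\eta\le \epsilon/(2K)$ makes the first term at most $\epsilon n/2$, which gives the claim.

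The conditioning on $\sum D_i$ being even has probability tending to $1/2$, as remarked in the preliminaries. Events with high probability under the i.i.d. law therefore remain with high probability after conditioning.

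I do not expect a genuine obstacle here. The only point requiring care is to pick $K$ before $\eta$, so that $\eta$ depends only on $\epsilon$ and the law $p$. I also need the weak law for the truncated tail sum, which holds because its mean is finite when $\tau>2$.
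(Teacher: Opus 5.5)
Your proof is correct and follows essentially the same route as the paper. You bound $e(A:A^\complement)$ by $\sum_{i\in A}D_i$, hence by the sum of the $\lfloor \eta n\rfloor$ largest stub-counts, and control that sum by a law of large numbers for $\sum_i D_i\mathds{1}\{D_i> K\}$, using $\mathbb E[D]<\infty$ for $\tau>2$. The only difference is minor: the paper uses a second law of large numbers to ensure that the top $\lfloor 2\delta n\rfloor$ stub-counts all exceed the threshold, whereas you absorb the sub-threshold terms into $K\eta n$, which avoids that extra step.
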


\begin{lemma}\label{lem_large_complements}
	For any~$\gamma \in (0,1/2)$, with high probability as~$n \to \infty$,~$G_n$ is such that
	\[
		e\big(A:A^\complement\big) \ge \frac{\gamma n}{50} \quad \text{for all $A \subset G_n$ with $\gamma n \le |A| \le (1-\gamma)n$}.
	\]
\end{lemma}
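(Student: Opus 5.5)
The plan is to prove the bound first for the configuration \emph{multigraph} $\mathrm{CM}_n$ before erasure, with the stronger constant $\gamma n/25$, and then show that erasure removes at most $\gamma n/50$ edges. For the erasure step, note that each erased edge is a self-loop or an extra copy in a parallel class. So for every $A$, the boundary $e(A:A^\complement)$ in $G_n$ is at least the boundary in $\mathrm{CM}_n$ minus the total number of erased edges. For $\tau>2$ the stub total $L_n=\sum_i D_i$ is of order $\mu n$, and the numbers of self-loops and multiple edges are $o(n)$ in probability. When $\tau>3$ this follows from finite variance. When $\tau\in(2,3]$ it follows from the bound $\mathbb E[\#\text{multi-edges}\mid D]\le \sum_{i<j} D_i^2D_j^2/L_n^2\wedge D_iD_j/L_n$, evaluated with the power-law tail as in Section~\ref{s_graph_properties}. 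Since $\gamma$ is fixed, $o(n)\le \gamma n/50$ with high probability.

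For the multigraph I will use a union bound over sets $A$, conditionally on the stub counts. Fix $A$ with $|A|=a\in[\gamma n,(1-\gamma)n]$, and let $s=\sum_{i\in A}D_i$. Because every stub count is at least $3$, both $s$ and $L_n-s$ are at least $3\gamma n$. The number of matchings with exactly $k$ cross edges is explicit, which gives
\[
\mathbb P\big(e(A:A^\complement)=k\big)=\binom{s}{k}\binom{L_n-s}{k}k!\,\frac{(s-k-1)!!\,(L_n-s-k-1)!!}{(L_n-1)!!}.
\]
For $k\le \gamma n/25$, Stirling's formula makes this at most $\exp\{-\tfrac{L_n}{2}H(s/L_n)+O(k\log(L_n/k))\}$, where $H$ is the binary entropy function. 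I then need this to beat $\binom{n}{a}\le e^{nH(a/n)}$ after summing over $a$ and over $k\le \gamma n/25$. In the homogeneous case $D_i\equiv 3$ the exponent is $\tfrac32 nH(a/n)$, which beats $nH(a/n)$ by a margin of order $\gamma\log(1/\gamma)n$. That margin absorbs the $O(k\log(L_n/k))$ term once $k\le \gamma n/25$ (this is where the constant $50$ is fixed).

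The main obstacle is heterogeneity of the degrees. If $A$ contains the hubs, $s/L_n$ can be close to $1$ while $a/n$ is not, and then $H(s/L_n)$ no longer dominates $H(a/n)$. To handle this I will first fix a large constant $K$ and use the law of large numbers to choose it so that, with high probability, vertices of stub count larger than $K$ carry at most a $\gamma/100$ fraction of all stubs; this uses $\mathbb E D<\infty$ for $\tau>2$. I then union-bound only over the trace $A\cap\{i:D_i\le K\}$, and control the high-degree part through the hub-stub count rather than the number of hubs. Concretely, I will expose the matching of all hub stubs first; there are at most $\gamma n/100$ of them. The remaining stubs are matched uniformly, so the low-degree vertices, whose stub counts lie in $[3,K]$, contribute a comparable entropy exponent with constants depending only on $K$. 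There are at least $(1-\gamma/50)L_n$ such stubs, and they split on each side of $(A,A^\complement)$ with at least $3\gamma n-\gamma n/100$ stubs. If the entropy comparison turns out delicate, the fallback is to invoke a known expansion theorem for configuration models with minimum degree $3$ (see~\cite{remco}) for the multigraph, and keep the erasure argument above unchanged.
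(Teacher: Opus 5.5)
Your proposal is correct in outline, but it gets the multigraph expansion by a different route from the paper.

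\textbf{How the paper does it.} The paper does no counting. It imports Lemma~12 of \cite{MR2957935}, which gives, with high probability and simultaneously for all $A\subseteq[n]$, the volume expansion $e^{\rm full}(A:A^\complement)\ge\frac1{100}\min\{\sum_{i\in A}D_i,\sum_{i\notin A}D_i\}$ in the unerased multigraph. The bound $D_i\ge3$ turns this into $\frac{3}{100}\min\{|A|,|A^\complement|\}\ge\frac{3\gamma n}{100}$. Lemma~\ref{lem_flatten} (erasure loses $o(n)$ edges, via \cite{MR2266448}) then costs at most $\frac{\gamma n}{100}$.

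\textbf{Comparison.} Your fallback is essentially the paper's proof. Your erasure step is a self-contained replacement for Lemma~\ref{lem_flatten}: only erased parallel copies matter, since self-loops never cross a cut, and their number is $o(n)$ by the second-moment bound on pair multiplicities. The paper's route is short and gives a stronger statement (all sets, measured by volume). Yours is self-contained and needs only linear-size sets, at the cost of the Stirling/entropy bookkeeping.

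\textbf{The heterogeneity obstacle is not real.} That bookkeeping is easier than you fear, and the hub truncation can be dropped. Let $f(x,y)=(x+y)\log(x+y)-x\log x-y\log y$. Your exponent is $\Phi(s,t)=\frac12 f(s,t)-k\log(C\sqrt{st}/k)$, with $t=L_n-s$ and $C$ an absolute constant. Then $\partial_s\Phi\ge\frac{t}{2(s+t)}-\frac{k}{2s}\ge0$ as soon as $\frac1s+\frac1t\le\frac1k$. This holds because $s,t\ge3\gamma n$ and $k\le\gamma n/25$. By symmetry, $\Phi$ is increasing in both arguments on this region. Hence every $A$ with $|A|=a$ satisfies $\Phi(s_A,L_n-s_A)\ge\Phi(3a,3(n-a))$, which is exactly your homogeneous computation. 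Your worry compared $H(s/L_n)$ with $H(a/n)$ but ignored that the prefactor $L_n/2$ compensates when $s/L_n$ is extreme.

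\textbf{A slip if you keep the truncation.} Choose $K$ so that the hub stubs number at most $\gamma n/100$, not a $\gamma/100$ fraction of $L_n$. Since $\mu$ may be arbitrarily large, the latter can exceed $3\gamma n$ and destroy your lower bound on the low-degree stubs on each side of the cut.
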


\subsection{Proof of coexistence}
Let~$\mu:=\sum_m mp(m)$.
\begin{lemma}\label{lem_not_many_trees}
	For any~$\delta \in (0,1)$ and~$\lambda_{\mathcal T} > 0$, the following holds. Let
	\[T:=\exp\{\frac{2\log 2-1}{3}(\delta + 2 \lambda_{\mathcal T}\mu)n\},\] with high probability as~$n \to \infty$,~$G_n$ is such that
	\[
		\inf_{\mathcal T_0:|\mathcal T_0|\le \delta n} \mathbb P(|\mathcal T_t| \le 2(\delta + 2\lambda_{\mathcal T}\mu) n \text{ for all }t \le T) > 1 - 1/T.
	\]
\end{lemma}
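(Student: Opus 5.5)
The plan is to dominate $|\mathcal T_t|$ by a one-dimensional birth–death chain and then control that chain's excursions with a Chernoff-type estimate. The graph condition needed is that $G_n$ has at most $\mu n$ edges, since erasing only removes edges. With high probability $\sum_i D_i \le (\mu+o(1))n$ by the law of large numbers, because $\tau>2$ gives $\mu<\infty$. So $|E(G_n)|\le \mu n$, up to adjusting constants or using $\mu+o(1)$. On such a graph, from any configuration $\mathcal T$ the total birth rate is at most $\lambda_{\mathcal T}\sum_{v\in\mathcal T}\deg(v)\le 2\lambda_{\mathcal T}|E|\le 2\lambda_{\mathcal T}\mu n$, and the death rate is exactly $|\mathcal T|$.

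Hence $|\mathcal T_t|$ is stochastically dominated by the birth–death chain $Z_t$ on $\mathbb Z_{\ge0}$ that jumps $+1$ at constant rate $\rho n$, where $\rho:=2\lambda_{\mathcal T}\mu$, and $-1$ at rate $Z$. This is an $M/M/\infty$ queue started from $Z_0\le\delta n$. The domination follows from a standard monotone coupling: births of $\mathcal T$ are thinned from the Poisson births of $Z$, and deaths are matched whenever $Z\ge|\mathcal T|$.

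For $Z$, the number of customers present at time $t$ is $\mathrm{Bin}(Z_0,e^{-t})$ plus an independent $\mathrm{Poisson}(\rho n(1-e^{-t}))$. This is dominated by $\mathrm{Poisson}$-like variables with mean at most $(\delta+\rho)n=:\theta n$. A Chernoff bound gives
\[
\mathbb P(Z_t\ge 2\theta n)\le \exp\{-(2\log2-1)\theta n\}
\]
at each fixed $t$, which is exactly where the constant $2\log 2-1$ comes from.

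To pass to all $t\le T$, I will discretize time into intervals of length $h=1/n$, or more simply use the following argument. If $Z$ exceeds $2\theta n$ at some time in $[k h,(k+1)h]$, then either $Z_{kh}\ge 2\theta n - n^{1/2}$, or more than $n^{1/2}$ arrivals occur in an interval of length $h$. The latter has probability superexponentially small. A cleaner alternative is to count upcrossings: each time $Z$ first hits $2\theta n$, the chain remains above $2\theta n-1$ for a time of order $1/n$ with positive probability, so
\[
\mathbb E\!\int_0^{T+1}\mathds 1\{Z_s\ge 2\theta n-1\}\,ds \ge c n^{-1}\,\mathbb P(\exists t\le T:Z_t\ge 2\theta n).
\]
The left side is at most $(T+1)e^{-(2\log2-1)\theta n+o(n)}$. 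So the failure probability is at most $nT e^{-(2\log2-1)\theta n}$. With $T=e^{\frac{2\log2-1}{3}\theta n}$, this is $nT^{-2}<T^{-1}$ for $n$ large.

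The main obstacle is this supremum-over-time step, in particular getting the Chernoff bound uniform in the initial condition and the threshold rounding right. The margin between exponent $1$ and exponent $1/3$ in $T$ leaves ample room, so any crude union bound over a mesh of size $T\cdot\mathrm{poly}(n)$ suffices. The graph-side condition $|E|\le \mu n$ is routine.
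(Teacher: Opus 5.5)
Your proof is correct and follows the paper's route. The only difference is the dominating process: the paper uses a product chain of independent two-state sites (the sequoia process, with site $v$ switching on at rate $\lambda_{\mathcal T}\deg(v)$) rather than your $M/M/\infty$ queue. From there it uses the same fixed-time bound $\mathbb E[2^{X}]\le e^{\mathbb E X}$, which gives the exponent $2\log 2-1$, and the same strong-Markov occupation-time argument to get failure probability $O(nT^{-2})<T^{-1}$.

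One aside is wrong as written, though your main argument does not rely on it. In the discarded discretization sketch, more than $n^{1/2}$ arrivals in a window of length $1/n$ has probability about $e^{-\frac12\sqrt n\log n}$. That is not superexponentially small and not negligible against $Tn$ windows. A slack of order $\epsilon n$ would be needed instead.
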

\begin{proof}

	We introduce an auxiliary process to bound the trees from above, which we call the \emph{sequoia process} and denote by~$(\bm{\psi}_t)_{t \ge 0}$. This is a very simple process of independent Bernoulli flipping at each site, as follows: 
\begin{itemize}
\item every $v$ with $\bm{\psi}_t(v)=0$ jumps to $\bm{\psi}_t(v)=1$ independently at rate $\lambda_{\mathcal T} \deg(v)$;
\item every $v$ with $\bm{\psi}_t(v)=1$ falls to $\bm{\psi}_t(v)=0$ independently at rate $1$.
\end{itemize}
	We fix~$\mathcal T_0$ with~$|\mathcal T_0| \le \delta n$ and let~$\bm{\psi}_0 =\mathcal T_0$.
Of course, the second jump mechanism above is exactly the usual contact process recovery of the trees, and the first mechanism dominates the infections,  because no vertex can infected by more vertices than it has neighbours. Thus it is clear that the sequoia process dominates the tree process.

We now control the sequoias.
Because $\tau>2$ we can assume by the strong law of large numbers that~$\sum_v D_v \le 2\mu n$, so the number of edges of~$G_n$ satisfies~$|E| \leq \sum D_v \leq 2 \mu n$. We then have
\[
\begin{split}
\frac{\rm d}{{\rm d}t}
	\mathbb E \left[|\bm{\psi}_t|\right]
&=\lambda_{\mathcal T}\sum_v\deg(v)(1-\mathbb P( \bm{\psi}_t(v)=1))-\mathbb E \left[|\bm{\psi}_t|\right] \le 2 \lambda_{\mathcal T}\mu n-\mathbb E \left[|\bm{\psi}_t|\right].
\end{split}
\]
	We solve this differential inequality, given also $|\bm{\psi}_0|\leq\delta n$, to obtain
\[
\mathbb E \left[|\bm{\psi}_t|\right]\le e^{-t}|\bm{\psi}_0|+(1-e^{-t})2\lambda_{\mathcal T}\mu n
\le(\delta+2\lambda_{\mathcal T}\mu) n=:\delta'n.
\]
Then by independence of the vertex occupancy processes and Markov's inequality,
\[
\begin{split}
\mathbb P\left(|\bm{\psi}_t|>2\delta' n\right)
&\le 2^{-2\delta' n}\mathbb E\left[2^{|\bm{\psi}_t|}\right]
=2^{-2\delta' n}\prod_v(1+\mathbb P( \bm{\psi}_t(v)=1))\\
&\le \exp\left(-2\delta' n\log 2+\sum \mathbb P( \bm{\psi}_t(v)=1)\right)
\le \exp\left(-(2\log 2 -1)\delta' n\right),
\end{split}
\]
where in the line break we also used $1+x\le e^x$.

By Fubini's theorem, with~$T$ as in the statement,
\[
\e\left[\int_0^{T+1}\mathds{1}\{|\bm{\psi}_t|>2\delta'n\}\,{\rm d}t\right]
\le \frac{T+1}{T^3}\le\frac{2}{T^2}.
\]
Since the total recovery rate is at most~$n$, after first exceeding~$2\delta'n$ the expected time spent above this threshold during the next unit interval is at least~$1/(2n)$. Thus, by the strong Markov property,
\[
\e\left[\int_0^{T+1}\mathds{1}\{|\bm{\psi}_t|>2\delta'n\}\,{\rm d}t\right]
\ge \frac{1}{2n}\p\left(\sup_{0\le t\le T}|\bm{\psi}_t|>2\delta'n\right).
\]
The probability in the last display is therefore at most~$4n/T^2<1/T$ for all sufficiently large~$n$, proving the lemma.
\end{proof}

\begin{proof}[Proof of Theorem~\ref{thm_coexistence}]
	We fix constants as follows:
	\begin{itemize}
		\item let~$\gamma \in (0,1/2)$; 
		\item fix~$\epsilon \in (0,\gamma/100)$ to be chosen later; 
		\item take~$\eta$ corresponding to~$\epsilon$ in Lemma~\ref{lem_small_complements} (also with~$\eta < \gamma$);
		\item take~$\delta$ and~$\lambda_{\mathcal T}$ small enough that~$2(\delta + 2\lambda_{\mathcal T}\mu) < \eta$. 
	\end{itemize}
Using Lemmas~\ref{lem_small_complements}, Lemma~\ref{lem_large_complements}, and Lemma~\ref{lem_not_many_trees}, with high probability as~$n \to \infty$,~$G_n$ is such that
	\begin{align}\label{eq_renewed_conditions}
		&e\big(A:A^\complement\big) \le \epsilon n \quad \text{$\forall A:|A| \le \eta n$}, \\[.2cm]
		&e\big(A:A^\complement\big) \ge \frac{\gamma n}{100} \quad \forall A:\frac{\gamma}{2} n \le |A| \le \left(1-\frac{\gamma}{2}\right)n,
	\end{align}
	and
	\begin{equation}\label{eq_renewed_tree}
		\inf_{\mathcal T_0:|\mathcal T_0|\le \delta n} \mathbb P(|\mathcal T_t| \le \eta n  \text{ for all }t \le T) > 1 - 1/T,
	\end{equation}
	where~$T:=\exp\{\frac{2\log 2-1}{3}(\delta + 2 \lambda_{\mathcal T}\mu)n\}$. 

	From now on, we fix a realization of~$G_n$ for which all the above is true, and let~$(\mathcal T_t,\mathcal B_t)_t$ be the grass-bushes-tress process on~$G_n$, with~$|\mathcal T_0| < \delta n$ and~$|\mathcal B_0|\ge \gamma n$.

	Given disjoint sets of vertices~$A,B$, we let~$r_+(A,B)$ and~$r_-(A,B)$ be the rates of increase and decrease of the bushes, respectively, when~$(\mathcal T_t,\mathcal B_t)=(A,B)$. Note that, defining
	\[
		\rho_+:=\lambda_{\mathcal B}\left(\frac{\gamma }{100} - \epsilon\right) \quad \text{and}\quad \rho_-:=1+\frac{2\lambda_{\mathcal T}\epsilon}{\gamma},
	\]
	whenever~$|B|\ge\gamma n/2$,~$e(A:A^\complement)\le\epsilon n$ and~$e(B:B^\complement)\ge\gamma n/100$, we have
	\begin{align*}
		r_+(A,B)&=\lambda_{\mathcal B}e\big(B:(A\cup B)^\complement\big)\ge\rho_+ n\ge\rho_+|B|,\\
		r_-(A,B)&=|B|+\lambda_{\mathcal T}e(A:B)\le|B|+\lambda_{\mathcal T}\epsilon n\le\rho_-|B|.
	\end{align*}
	We take~$\epsilon$ small enough and~$\lambda_{\mathcal B}$ large enough that~$\rho_+ > \rho_-$. Note that the above combined with~\eqref{eq_renewed_conditions} and~\eqref{eq_renewed_tree} gives
	\begin{equation}\label{eq_modified_rated}
		|\mathcal T_t| \le \eta n, \; \frac{\gamma}{2}n \le |\mathcal B_t| \le \left(1-\frac{\gamma}{2}\right)n, \quad \Longrightarrow \quad  r_+(\mathcal T_t,\mathcal B_t) \ge  \rho_+ |\mathcal B_t|,\;  r_-(\mathcal T_t,\mathcal B_t) \le \rho_-|\mathcal B_t|.
	\end{equation}

Our plan now is to build on the same probability space as~$(\mathcal T_t,\mathcal B_t)_t$ a birth-and-death chain~$(Y_t)_t$ such that until a certain stopping time, the domination~$Y_t \le |\mathcal B_t|$ holds. We will take~$Y_0 = \lfloor \gamma n \rfloor$, and the jump rates~$q(\cdot,\cdot)$ of~$(Y_t)_t$ will be equal to
	\begin{align*}
		&q(m,m+1)=\rho_+ m,\; m \in \{0,\ldots, \lfloor (1-\gamma/2)n\rfloor - 1\},\\
		&q(m,m-1) = \rho_- m,\;m \in \{0,\ldots,\lfloor (1-\gamma/2)n\rfloor\}.
	\end{align*}
	All other jump rates are zero.

	The coupling is done as follows. At times when~$Y_t \neq |\mathcal B_t|$, we let~$Y_t$ jump independently of~$(\mathcal T_t,\mathcal B_t)$, according to~$q(\cdot,\cdot)$. At times when~$Y_t = |\mathcal B_t|$, we make~$Y_t$ copy the jumps of~$|\mathcal B_t|$ as much as possible, according to the standard scheme: 
	\begin{itemize}
		\item if~$|\mathcal B_t|$ jumps to the right (respectively, to the left),~$Y_t$ flips a coin with probability~$1 \wedge (q(Y_t,Y_t+1)/r_+(\mathcal T_t,\mathcal B_t))$ (respectively,~$1 \wedge (q(Y_t,Y_t-1)/r_-(\mathcal T_t,\mathcal B_t))$) to copy the same jump;
		\item with an additional rate~$0 \vee (q(Y_t,Y_t+1)-r_+(\mathcal T_t,\mathcal B_t))$ (respectively,~$0 \vee (q(Y_t,Y_t-1)-r_-(\mathcal T_t,\mathcal B_t))$),~$Y_t$ jumps on its own to the right (respectively, to the left).
	\end{itemize}
	It should now be clear that~$(Y_t)_t$ jumps with the correct rates.

	Define the stopping time
	\[
		\sigma:= \inf\left\{t \ge 0: |\mathcal T_t| > \eta n \text{ or } Y_t < \frac{\gamma}{2} n\right\}.
	\]
	We claim that we have
	\begin{equation}\label{eq_key_YB}Y_t \le |\mathcal B_t|\quad \text{for all }t \in [0,\sigma].\end{equation}
		The inequality holds at~$t=0$, since we take~$Y_0=\lfloor \gamma n \rfloor \le |\mathcal B_0|$. Moreover, assume that for some~$s \le \sigma$ we have~$Y_{s-} \le |\mathcal B_{s-}|$, and suppose a jump of~$(\mathcal B_t,Y_t)$ occurs at time~$s$. If~$Y_{s-}<|\mathcal B_{s-}|$, then we clearly have~$Y_{s}\le |\mathcal B_s|$. Assume on the other hand that~$Y_{s-}=|\mathcal B_{s-}|$. Since~$s-<\sigma$, we have~$|\mathcal T_{t-}|\le\eta n$ and~$Y_{s-} \ge \frac{\gamma}{2}n$. We also have~$Y_{s-} \le \lfloor (1-\gamma/2)n \rfloor$, since~$(Y_t)_t$ never goes above this value. Hence,~$\frac{\gamma}{2}n \le Y_{s-}= \mathcal{B}_{s-} \le \lfloor (1-\gamma/2)n \rfloor$. Then, by~\eqref{eq_modified_rated}, we have~$ r_+(\mathcal T_t,\mathcal B_t) \ge  \rho_+ |\mathcal B_t|$ and~$ r_-(\mathcal T_t,\mathcal B_t) \le \rho_-|\mathcal B_t|$. Confronting this with the jump mechanism of~$(Y_t)_t$, we see that~$Y_t \le |\mathcal B_t|$.  This completes the proof of~\eqref{eq_key_YB}.

		Now, the definition of~$\sigma$ and~\eqref{eq_key_YB} give
	\begin{equation}\label{eq_ineq_sigma}
		\mathbb P(\mathcal B_t = \varnothing) \le \mathbb P(\sigma \le t) \le \mathbb P(\exists s \le t: |\mathcal T_s| > \eta n) + \mathbb P(\exists s \le t: Y_s < \gamma n/2).
	\end{equation}

Using~$\rho_+>\rho_-$,~$|Y_0|=\lfloor \gamma n\rfloor$ and a large deviations bound for random walks, we can find a constant~$c_0>0$ such that
	\begin{equation}
	\label{eq_reneww2}
		\mathbb P\left(\exists s \le \exp\{c_0 n\}: Y_s < \gamma n /2 \right)<\exp\{-c_0 n\}.
	\end{equation}
	Recall that~$T:=\exp\{\frac{2\log 2-1}{3}(\delta + 2 \lambda_{\mathcal T}\mu)n\}$. Taking~$c_1:=c_0 \wedge \frac{2\log 2-1}{3}(\delta + 2 \lambda_{\mathcal T}\mu)$ and~$T':=\exp\{c_1 n\}$, we have
\begin{equation}
	\label{eq_reneww3}
	\mathbb P(\exists s \le T': |\mathcal T_s| > \eta n) \le \mathbb P(\exists s \le T: |\mathcal T_s| > \eta n) \stackrel{\eqref{eq_renewed_tree}}{\le} 1/T \le \exp\{-c_1 n\}.
\end{equation}
Now, using~\eqref{eq_reneww2} and~\eqref{eq_reneww3} in~\eqref{eq_ineq_sigma} (with~$t=T'$) completes the proof.
\end{proof}

\section{Proof of graph conditions} \label{s_proof_graph}
\label{s_graph_properties}

In this final section, we have deferred the proofs of results on graph properties that were previously needed. First we have a result about the order statistics of the i.i.d. stub-counts that will build the graph and be useful throughout, and then the other results are grouped by which section they are needed for.

\subsection{Stub-count bounds}
For now, we allow for any~$\tau > 1$; in subsequent subsections, we will specialize to~$\tau \in (1,2)$ and to~$\tau \ge 2$.
Define
\begin{equation*}
	\bar{F}(x):=p(\{m: m > x\}),\quad x \in [0,\infty).
\end{equation*}
It follows from~\eqref{eq_condition_stubcount} that there exist~$c_1, c_2 > 0$ such that
\begin{equation}\label{eq_tail_constants}
	c_1 < \frac{\bar{F}(x)}{x^{-(\tau-1)}} < c_2 \quad \text{ for all } x \in [1,\infty).
\end{equation}

Let~$D_1,\ldots, D_n$ be independent, all distributed as~$p$.
%
We let~$(D_{(1)},\ldots,D_{(n)})$ be the sequence~$(D_1,\ldots,D_n)$ arranged in non-increasing order.

\begin{lemma}\label{lem_event_An}
	There exist~$\alpha_1,\alpha_2 > 0$ such that defining the event
	\begin{equation}\label{eq_def_event_An}\begin{split}
	\mathcal A_n := &\bigcap_{i=1}^{\lfloor (\log n)^2 \rfloor}\left\{ (\log n)^{-\frac{3}{\tau-1}} \le \frac{D_{(i)}}{(n/i)^{1/(\tau-1)}} \le (\log n)^{\frac{3}{\tau-1}} \right\}\\
	&\qquad \cap \bigcap_{i=\lfloor (\log n)^2 \rfloor+1}^n \left\{\alpha_1 \le \frac{D_{(i)}}{(n/i)^{1/(\tau-1)}} \le \alpha_2\right\},
\end{split}
\end{equation}
we have~$\mathbb P(\mathcal A_n) \to 1$ as~$n \to \infty$.
\end{lemma}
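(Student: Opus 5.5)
The plan is to reduce everything to binomial concentration for the counts $N(x):=|\{j\le n: D_j > x\}|\sim \mathrm{Bin}(n,\bar F(x))$. The link is the identity
\[
\{D_{(i)} > x\}=\{N(x)\ge i\}.
\]
So an upper bound $D_{(i)}\le x$ is equivalent to $N(x)\le i-1$, and a lower bound $D_{(i)}> x$ is equivalent to $N(x)\ge i$. Write $x_i(a):=a\,(n/i)^{1/(\tau-1)}$. By~\eqref{eq_tail_constants}, $n\bar F(x_i(a))$ lies between $c_1 a^{-(\tau-1)} i$ and $c_2 a^{-(\tau-1)} i$, provided $x_i(a)\ge 1$. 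The case $x_i(a)<1$ only occurs for lower bounds near $i\approx n$, and there it is trivial since $D\ge 3$; we adjust $\alpha_1\le 1$ so that this is harmless.

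\emph{Bulk range $i>(\log n)^2$.} Choose $\alpha_2$ large so that $c_2\alpha_2^{-(\tau-1)}\le 1/2$. Then $\mathbb E N(x_i(\alpha_2))\le i/2$, and a Chernoff bound gives
\[
\mathbb P\big(D_{(i)}>x_i(\alpha_2)\big)=\mathbb P\big(N(x_i(\alpha_2))\ge i\big)\le e^{-ci}\le e^{-c(\log n)^2}.
\]
Symmetrically, choose $\alpha_1$ small so that $c_1\alpha_1^{-(\tau-1)}\ge 2$. Then $\mathbb E N\ge 2i$ and
\[
\mathbb P\big(D_{(i)}<x_i(\alpha_1)\big)\le \mathbb P\big(N\le i-1\big)\le e^{-ci}.
\]
A boundary subtlety arises because $\bar F$ uses strict inequality, but $\{D_{(i)}\ge x\}$ versus $\{D_{(i)}>x\}$ can be handled by using $x$ slightly smaller, which the constants absorb. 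A union bound over the at most $n$ indices gives total failure probability $n e^{-c(\log n)^2}\to0$.

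\emph{Top range $i\le (\log n)^2$.} Here binomial deviations at scale $i$ are not small, which is why the window widens to $(\log n)^{\pm 3/(\tau-1)}$.

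For the upper bound, it suffices to control the top value: $D_{(i)}\le D_{(1)}$, and
\[
\mathbb P\big(D_{(1)}> (\log n)^{3/(\tau-1)} n^{1/(\tau-1)}\big)\le n\,c_2\,(\log n)^{-3}n^{-1}\to0 .
\]
Since $(n/i)^{1/(\tau-1)}\ge n^{1/(\tau-1)}(\log n)^{-2/(\tau-1)}$ on this range, this covers all $i\le(\log n)^2$ at once. To be safe I would instead do a union bound per $i$: $\mathbb P(N(x)\ge i)\le \binom{n}{i}\bar F(x)^i\le (en\bar F(x)/i)^i\le (ec_2(\log n)^{-3})^i$, which sums to $o(1)$ over $i\ge1$.

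For the lower bound, set $x=(\log n)^{-3/(\tau-1)}(n/i)^{1/(\tau-1)}$. Then $\mathbb E N(x)\ge c_1 i(\log n)^3$, which is at least $c_1(\log n)^3$. Chernoff gives
\[
\mathbb P\big(N(x)\le i-1\big)\le \exp\big(-c(\log n)^3\big),
\]
and the union over $(\log n)^2$ indices is negligible.

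The main obstacle is bookkeeping rather than conceptual: the case $i=1$ in the upper bound, where concentration fails and one must use the first-moment/union bound $\mathbb P(N(x)\ge i)\le (en\bar F(x)/i)^i$ instead, and the choice of exponents $3/(\tau-1)$ so that this sum converges. Everything else is standard Chernoff estimates.
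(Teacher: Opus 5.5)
Your treatment of the range $i>(\log n)^2$ is the paper's argument: the identity $\{D_{(i)}>x\}=\{N(x)\ge i\}$ with $N(x)\sim\mathrm{Bin}(n,\bar F(x))$, plus Chernoff bounds with $\alpha_1,\alpha_2$ chosen so that $n\bar F$ is well above or well below $i$. Your worry about $>$ versus $\ge$ is unnecessary, since $\{D_{(i)}<x\}\subseteq\{D_{(i)}\le x\}=\{N(x)\le i-1\}$.

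For $i\le(\log n)^2$ you take a different but valid route. For the lower bound, the paper uses monotonicity, $D_{(i)}\ge D_{(i^*+1)}\ge\alpha_1(n/(i^*+1))^{1/(\tau-1)}\ge(\log n)^{-3/(\tau-1)}(n/i)^{1/(\tau-1)}$, which reuses the bulk estimate. You instead run a direct Chernoff bound with mean at least $c_1 i(\log n)^3$; both work. For the upper bound, the paper uses one first-moment bound on the maximum. The argument you finally commit to is the per-$i$ bound $\mathbb P(N(x)\ge i)\le(en\bar F(x)/i)^i\le(ec_2(\log n)^{-3})^i$, which is correct and sums to $o(1)$.

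Your first version of that upper bound, however, is wrong as written. On $\{D_{(1)}\le(\log n)^{3/(\tau-1)}n^{1/(\tau-1)}\}$, combined with $(n/i)^{1/(\tau-1)}\ge n^{1/(\tau-1)}(\log n)^{-2/(\tau-1)}$, you only get $D_{(i)}/(n/i)^{1/(\tau-1)}\le(\log n)^{5/(\tau-1)}$, not $(\log n)^{3/(\tau-1)}$. So it does not ``cover all $i$ at once''. The paper fixes this by lowering the threshold to $(n\log n)^{1/(\tau-1)}=(\log n)^{3/(\tau-1)}(n/(\log n)^2)^{1/(\tau-1)}$. That threshold still gives $\mathbb P(D_{(1)}>(n\log n)^{1/(\tau-1)})\le c_2/\log n\to0$, and this is where the exponent $3=1+2$ comes from.

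In your per-$i$ argument the exponent $3$ plays no role: any window factor tending to infinity makes the geometric series vanish. So your remark that $3/(\tau-1)$ is chosen ``so that this sum converges'' is slightly off. Since you explicitly fall back on the per-$i$ bound, your proof goes through, but you should delete the flawed first attempt.
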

\begin{proof}
	We will use the Chernoff bounds, for~$X \sim\mathrm{Bin}(N,q)$:
	\begin{align}
		\label{eq_chernoff_upper}		&\mathbb P(X \le Nq - t) \le \exp \left\{-\frac{t^2}{2Nq} \right\},\quad t \ge 0,\\
		\label{eq_chernoff_lower}		& \mathbb P(X \ge Nq + t) \le \exp \left\{-\frac{t^2}{2(Nq + t/3)} \right\},\quad t \ge 0.
	\end{align}

	For~$i \in [n]$ and~$\alpha > 0$, write~$m_\alpha(i):=\alpha (n/i)^{1/(\tau-1)}$. Note that~\eqref{eq_tail_constants} gives
	\begin{equation}\label{eq_bounds_alpha}
		c_1 \alpha^{-(\tau-1)}i \le n \bar{F}(m_\alpha(i)) \le c_2 \alpha^{-(\tau-1)}i \quad \text{whenever } m_\alpha(i) \ge 1.
	\end{equation}

For~$i \in [n]$ and~$\alpha_1 > 0$, we have
\begin{equation*}
	\mathbb P(D_{(i)} \le m_{\alpha_1}(i)) = \mathbb P (|\{j:D_j > m_{\alpha_1}(i)\}| < i) = \mathbb P( \mathrm{Bin}(n,\bar{F}(m_{\alpha_1}(i)))<i).
\end{equation*}
	Take~$\alpha_1$ small enough that~$c_1\alpha_1^{-(\tau-1)} > 1$. In case~$m_{\alpha_1}(i) < 1$, we have $\bar{F}(m_{\alpha_1}(i))=1$ (since degrees are at least~$3$), so $\mathbb P( \mathrm{Bin}(n,\bar{F}(m_{\alpha_1}(i)))<i)=0$. Now assume that~$m_{\alpha_1}(i) \ge 1$; then, the definition of~$\alpha_1$ and~\eqref{eq_bounds_alpha} give~$n \bar{F}(m_{\alpha_1}(i)) > i$. Then,~\eqref{eq_chernoff_upper}  gives
	\begin{equation*}
		\mathbb P( \mathrm{Bin}(n,\bar{F}(m_{\alpha_1}(i)))\le i) \le \exp \left\{- \frac{(c_1\alpha_1^{-(\tau-1)}i - i)^2}{2c_2 \alpha_1^{-(\tau-1)}i} \right\}.
	\end{equation*}
	We thus have the bound
	\begin{equation}\label{eq_stubs_below}
		\mathbb P\left(D_{(i)} \le \alpha_1 \left(\frac{n}{i} \right)^{1/(\tau-1)}\right)\le \exp \left\{- \frac{(c_1\alpha_1^{-(\tau-1)} - 1 )^2}{2c_2 \alpha_1^{-(\tau-1)}} \cdot i \right\}.
	\end{equation}

Next, for~$i \in [n]$ and~$\alpha_2 > 0$, we have
\begin{equation*}
	\mathbb P(D_{(i)} > m_{\alpha_2}(i)) = \mathbb P (|\{j:D_j > m_{\alpha_2}(i)\}| \ge i) = \mathbb P( \mathrm{Bin}(n,\bar{F}(m_{\alpha_2}(i))) \ge i).
\end{equation*}
	Now take~$\alpha_2 \ge 1$ large enough that~$c_2\alpha_2^{-(\tau-1)}<1$. Since~$m_{\alpha_2}(i) \ge 1$, we can again apply~\eqref{eq_bounds_alpha}, which gives $n\bar{F}(m_{\alpha_2}(i)) < i$. Then, also bounding~$i-n\bar{F}(m_{\alpha_2}(i)) \le i$,~\eqref{eq_chernoff_lower} gives
	\begin{equation*}
		\mathbb P( \mathrm{Bin}(n,\bar{F}(m_{\alpha_2}(i))) \ge i) \le \exp\left\{- \frac{(i - c_2\alpha_2^{-(\tau-1)}i)^2}{2(c_2\alpha_2^{-(\tau-1)}i + i/3)}\right\}.
	\end{equation*}
	We then have the upper bound
	\begin{equation}\label{eq_stubs_above}
		\mathbb P\left(D_{(i)} > \alpha_2\left(\frac{n}{i}\right)^{1/(\tau-1)}\right) \le \exp\left\{ -\frac{(1-c_2\alpha_2^{-(\tau-1)})^2}{2(c_2\alpha_2^{-(\tau-1)}+1/3)} \cdot  i \right\}.
	\end{equation}

	Now,~\eqref{eq_stubs_below},~\eqref{eq_stubs_above} and a union bound imply that
	\begin{equation}\label{eq_first_convergence_stub}
		\mathbb P\left(\bigcap_{i=\lfloor (\log n)^2\rfloor + 1}^{n}\left\{\alpha_1 \le \frac{D_{(i)}}{(n/i)^{1/(\tau-1)}} \le \alpha_2  \right\}\right) \xrightarrow{n \to \infty} 1.
	\end{equation}

	We now turn to the remaining part in the event~$\mathcal A_n$. Abbreviate~${i}^*:=\lfloor (\log n)^2 \rfloor$.
Assume that~$n$ is large enough that
	\[
		\alpha_1 \left( \frac{n}{{i}^*+1}\right)^{1/(\tau-1)} \ge \left(\frac{n}{(\log n)^3}\right)^{1/(\tau-1)},
	\]
	where~$\alpha_1$ is the constant of~\eqref{eq_first_convergence_stub}. 
	In the event~$\{D_{(i^*+1)} \ge \alpha_1(n/(i^*+1))^{1/(\tau-1)}\}$, for all~$i \in \{1,\ldots, i^*\}$ we then have
	\[
		D_{(i)} \ge D_{(i^*+1)} \ge  \left(\frac{n}{(\log n)^3}\right)^{1/(\tau-1)} \ge {(\log n)^{-3/(\tau-1)}} \cdot \left( \frac{n}{i} \right)^{1/(\tau-1)}.
	\]
This proves that
	\[
\mathbb P\left(  
	\bigcap_{i=1}^{i^*} \left\{ D_{(i)} \ge (\log n)^{-3/(\tau-1)} \cdot \left( \frac{n}{i} \right)^{1/(\tau-1)} 
\right\}
\right) \xrightarrow{n \to \infty} 1.
	\]

	Finally, recalling that~$D_{(1)}=\max_{i \in [n]} D_i$, we have
	\begin{align*}
		\mathbb P\left( D_{(1)} > (n\log n)^{1/(\tau-1)}\right) &\le n \cdot \bar{F}((n\log n)^{1/(\tau-1)})\\
		&\stackrel{\eqref{eq_tail_constants}}{\le}c_2 (\log n)^{-1}  \xrightarrow{n \to \infty} 0.
	\end{align*}
On the event that~$D_{(1)} \le (n\log n)^{1/(\tau-1)}$, for all~$i \in \{1,\ldots, i^*\}$ we have
\[
	D_{(i)} \le (n\log n)^{\frac{1}{\tau-1}} = (\log n)^{\frac{3}{\tau-1}} \cdot \left( \frac{n}{(\log n)^2} \right)^{\frac{1}{\tau-1}} \le (\log n)^{\frac{3}{\tau-1}}\cdot \left(\frac{n}{i} \right)^{\frac{1}{\tau-1}}.
\]
This proves that
	\[
\mathbb P\left(  
	\bigcap_{i=1}^{i^*} \left\{ D_{(i)} \le (\log n)^{\frac{3}{\tau-1}} \cdot \left( \frac{n}{i} \right)^{1/(\tau-1)} 
\right\}
\right) \xrightarrow{n \to \infty} 1. \qedhere
	\]

\end{proof}

\subsection{Proofs of statements from Section~\ref{s_no_coex}}
\label{s_proof_graph_12}

As in Section~\ref{s_no_coex}, we assume here that~$\tau \in (1,2)$.

\begin{proof}[Proof of Lemma~\ref{lem_event_Bn}]
	For these graph lemmas and their proofs, label the vertices so that vertex~$i$ has stub-count~$D_{(i)}$, breaking ties arbitrarily. We write~$L_n := \sum_{i \in [n]} D_{(i)}$.

	Letting~$\mathcal A_n$ be as in~\eqref{eq_def_event_An}, we will prove that
	\begin{equation*}
		\mathbb P(\mathcal B_n \mid \mathcal A_n) \xrightarrow{n \to \infty} 1.
	\end{equation*}
	The desired result will then follow from Lemma~\ref{lem_event_An}.

	Let~$\alpha_1,\alpha_2$ be the constants from Lemma~\ref{lem_event_An}. Assume that~$n$ is large enough that~$ \alpha_1> (\log n)^{-3/(\tau-1)}$ and~$\alpha_2 < (\log n)^{3/(\tau-1)}$.
	Condition on a stub count sequence~$(D_i)_{i \in [n]}$ for which the event~$\mathcal A_n$ occurs.

	Assume that at first, we reveal all degrees in the graph, but none of the matchings. We follow an exploration algorithm to reveal the neighbours of~$i$. We proceed in steps, which are labelled by an integer~$\ell \ge 1$. Let~$\mathcal S_0 = \varnothing$. The set~$\mathcal S_\ell$ will denote the set of stubs of the graph that have been touched by the algorithm up to (and including) the~$\ell$th step. The algorithm stops after a step~$\ell$ in which either all stubs of~$i$ are in~$\mathcal S_\ell$, or a stub of~$i$ has been matched to a stub of~$j$.

	Step~$\ell$ is defined as follows: we take a stub~$\mathsf s$ of~$i$ not belonging to~$\mathcal S_{\ell-1}$, and match it to a stub~$\mathsf s'$ chosen uniformly at random from the set of all stubs, excluding~$\mathcal S_{\ell-1} \cup \{\mathsf s\}$. We then set~$\mathcal S_\ell := \mathcal S_{\ell-1} \cup \{\mathsf s, \mathsf s'\}$.

	Given that the algorithm has not halted by step~$\ell$, the probability that a stub of~$i$ is matched to a stub of~$j$ in step~$\ell$ is at least~$ \frac{D_{(j)}}{L_n-2(\ell-1)}$. Since~$\mathcal A_n$ occurs, we can bound~$D_{(k)} \ge (\log n)^{-3/(\tau-1)} (n/k)^{1/(\tau-1)}$ for all~$k \in [n]$, and
	\[L_n:=\sum_{k \in [n]} D_k \le (\log n)^{3/(\tau-1)} \sum_{k=1}^n (n/k)^{1/(\tau-1)} \le C(\log n)^{3/(\tau-1)} n^{1/(\tau-1)} .\] Hence, the probability that no stub of~$i$ is matched to a stub of~$j$ is at most
	\begin{align*}
		\left(1- \frac{D_{(j)}}{L_n} \right)^{D_{(i)}/2} &\le \exp\left\{ - \frac{D_{(i)}D_{(j)}}{2L_n} \right\} \\
		&\le \exp \left\{ - \frac{(\log n)^{-6/(\tau-1)}(n^2/(ij))^{1/(\tau-1)}}{2C(\log n)^{3/(\tau-1)}n^{1/(\tau-1)}} \right\}.
	\end{align*}
	In case~$ij \le 4n/(\log n)^{10}$, the right-hand side above is smaller than
	\[
		\exp\left\{-\frac{1}{2^{1+\frac{2}{\tau-1}}C} (\log n)^{\frac{1}{\tau-1}} \right\}.
	\]
	Taking a union bound over all possible pairs~$i,j$  completes the proof.
\end{proof}

\begin{corollary}\label{cor_lower_bound}
With high probability as~$n \to \infty$, we have
	\[
		\deg(i) \ge \max\left\{1,\; {(\log n)^{-10}}\cdot \frac{n}{i}\right\} \quad \text{for all } i \in [n].
	\]
\end{corollary}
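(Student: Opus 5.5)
This should follow from Lemma~\ref{lem_event_Bn} together with elementary counting, with vertices labelled by decreasing stub-count as in its proof. The first step is to work on the event $\mathcal B_n$, which by that lemma has probability tending to one. On $\mathcal B_n$, vertex $i$ is adjacent to every $j\neq i$ with $ij\le 4n(\log n)^{-10}$. So, as a first bound,
\[
\deg(i)\ \ge\ \left|\left\{ j\in[n]\setminus\{i\}: j\le \tfrac{4n}{i(\log n)^{10}}\right\}\right| \ \ge\ \left\lfloor \tfrac{4n}{i(\log n)^{10}}\right\rfloor - 1 .
\]

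Next I split according to the size of $n/(i(\log n)^{10})$. If $\frac{n}{i(\log n)^{10}}\ge 1$, the right-hand side above is at least $4x-2\ge x$ with $x=\frac{n}{i(\log n)^{10}}\ge1$. This gives both $\deg(i)\ge 1$ and $\deg(i)\ge (\log n)^{-10} n/i$.

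In the remaining case $\frac{n}{i(\log n)^{10}}<1$, only the claim $\deg(i)\ge 1$ is needed. For this I use Lemma~\ref{lem_connectedness}: with high probability $G_n$ is connected, and since $n\ge2$, every vertex then has degree at least one. Intersecting the two high-probability events, $\mathcal B_n$ and connectedness, finishes the proof.

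The only mild obstacle is the small-$i$ boundary: for $i$ close to $n/(\log n)^{10}$ the set of guaranteed neighbours might contain only $i$ itself. The case split handles this, because in that range the threshold $4x-2\ge x$ still holds once $x\ge 1$. Connectedness is used only to deal with the trivial bound $1$.
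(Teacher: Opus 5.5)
Your proposal is correct and is essentially the paper's proof: on the intersection of $\mathcal B_n$ with connectedness, connectedness gives $\deg(i)\ge 1$, and when $x=(\log n)^{-10}n/i\ge 1$ you count the guaranteed neighbours $j\le 4x$, $j\ne i$, to get $\lfloor 4x\rfloor-1\ge x$. Your closing remark about the ``small-$i$ boundary'' is slightly garbled, since the $\mathcal B_n$ bound is vacuous for large $i$ (where $x<1$), but your case split already handles this and the argument is unaffected.
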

\begin{proof}
	By Lemmas~\ref{lem_event_An},~\ref{lem_connectedness} and~\ref{lem_event_Bn}, we have that with high probability as~$n \to \infty$, $G_n$ is connected and the event~$\mathcal B_n$ occurs. Assume that this is the case.

	Connectedness implies that every vertex has at least one neighbour, that is,~$\deg(i) \ge 1$ for all~$i$. Assume that~$i$ is such that~$(\log n)^{-10} n/i \ge 1$. The assumption that~$\mathcal B_n$ occurs implies that~$i$ is connected to all vertices~$j$ with~$1 \le j \le \lfloor 4(n/i)(\log n)^{-10}\rfloor$ (except for~$i$ itself), so~$\deg(i) \ge \lfloor 4(n/i)(\log n)^{-10}\rfloor -1 \ge (n/i) (\log n)^{-10}$.
\end{proof}

\begin{lemma}\label{lem_upper_degrees}
There exists~$\mathsf{k} > 0$ such that, with high probability as~$n \to \infty$, we have
	\[
		\deg(i) \le (\log n)^{\mathsf{k}}\cdot \frac{n}{i} \quad \text{for all } i \in [n].
	\]
\end{lemma}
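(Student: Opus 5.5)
The plan is to work conditionally on the event~$\mathcal A_n$ of Lemma~\ref{lem_event_An}, with vertex~$i$ carrying~$D_{(i)}$ stubs as in the proof of Lemma~\ref{lem_event_Bn}. Since~$\mathbb P(\mathcal A_n)\to1$, it suffices to show that, for each~$i$, the probability that~$\deg(i) > (\log n)^{\mathsf k} n/i$ is~$o(1/n)$ given any stub sequence in~$\mathcal A_n$, and then take a union bound over~$i$.

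\emph{Small indices.} Using only~$D_{(1)}$, on~$\mathcal A_n$ we have
\[
L_n \ge D_{(1)} \ge (\log n)^{-3/(\tau-1)} n^{1/(\tau-1)},\qquad D_{(i)} \le (\log n)^{3/(\tau-1)}(n/i)^{1/(\tau-1)}.
\]
Hence~$2D_{(i)}\le L_n/2$ as soon as~$i \ge (\log n)^{12}$. For~$i<(\log n)^{12}$ I will use the trivial bound~$\deg(i)\le n \le (\log n)^{12}\, n/i$. So I take~$\mathsf k\ge 12$ and assume~$i\ge(\log n)^{12}$ from now on.

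\emph{Splitting the neighbours.} Fix the threshold~$J:=\lceil (\log n)^{\mathsf m}\, n/i\rceil$, with~$\mathsf m$ to be chosen. Bound
\[
\deg(i)\le J+N_i,
\]
where~$N_i$ is the number of stubs of~$i$ matched to stubs of vertices~$j>J$. Erasure is what makes this work: the at most~$J$ heavy neighbours are counted once, whereas~$D_{(i)}$ itself can be much bigger than~$n/i$ when~$\tau<2$. To control~$N_i$, I reveal the stubs of~$i$ one at a time, as in the exploration of Lemma~\ref{lem_event_Bn}. At every step at most~$2D_{(i)}\le L_n/2$ stubs have been used, so conditionally on the past, the step hits a light vertex with probability at most~$2L'_J/L_n$, where~$L'_J:=\sum_{j>J}D_{(j)}$. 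Thus~$N_i$ is stochastically dominated by~$\mathrm{Bin}(D_{(i)},2L'_J/L_n)$.

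\emph{Computing the mean.} On~$\mathcal A_n$, when~$J\ge(\log n)^2$,
\[
L'_J\le \alpha_2\sum_{j>J}(n/j)^{1/(\tau-1)}\le C n^{1/(\tau-1)}J^{1-\frac1{\tau-1}},
\]
which is a convergent tail because~$1/(\tau-1)>1$. The mean is then at most
\[
C(\log n)^{6/(\tau-1)}\,(n/i)^{1/(\tau-1)}\,J^{-\frac{2-\tau}{\tau-1}}
= C(\log n)^{\frac{6}{\tau-1}-\mathsf m\frac{2-\tau}{\tau-1}}\cdot\frac{n}{i}
\le \frac{n}{i}.
\]
Here I used~$(n/i)^{1/(\tau-1)-\frac{2-\tau}{\tau-1}}=n/i$, which is the key exponent cancellation, and the last step holds once~$\mathsf m$ is large. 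The Chernoff bound~\eqref{eq_chernoff_lower} then gives
\[
\mathbb P\big(N_i> 2n/i+ 2\log^2 n\big)\le e^{-c\log^2 n}=o(n^{-2}).
\]
Since~$i\le n$, the additive~$\log^2 n$ can be absorbed into~$(\log n)^{\mathsf k} n/i$ after enlarging~$\mathsf k$.

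\emph{Main obstacle.} I expect the delicate step to be keeping the conditional hitting probabilities uniformly bounded during the exploration. This is exactly why I separated out~$i<(\log n)^{12}$: for those vertices~$D_{(i)}$ may be comparable to~$L_n$ and the domination fails. It is also why~$J$ carries a polylogarithmic factor, to absorb the~$(\log n)^{\pm 3/(\tau-1)}$ slack in~$\mathcal A_n$. Taking~$\mathsf k=\max\{12,\mathsf m+1\}$ plus a constant then concludes.
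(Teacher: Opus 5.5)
Your proof is correct and follows essentially the same route as the paper. Both arguments condition on $\mathcal A_n$ and split the neighbours of $i$ at a threshold of order $n/i$ up to polylogarithmic factors, with erasure capping the heavy ones by the threshold. Both then dominate the light stubs by a binomial via one-by-one stub matching and use the exponent cancellation $(n/i)^{1/(\tau-1)}(n/i)^{-(2-\tau)/(\tau-1)}=n/i$ before a Chernoff and union bound.

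The differences are cosmetic. You place the threshold a polylog factor above $n/i$ rather than below, which removes the paper's separate large-$i$ case. You also handle $i<(\log n)^{12}$ with the trivial bound, where the paper uses the observation $L_n-2D_{(i)}\ge D_{(1)}$ for all $i\ge 3$.
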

\begin{proof}
	By Lemma~\ref{lem_event_An},  it suffices to prove that
	\begin{equation*}
		\mathbb P\left(\left.\deg(i) \le (\log n)^{\mathsf{k}}\cdot \frac{n}{i} \text{ for all } i \in [n] \;\right|\; \mathcal A_n\right) \xrightarrow{n \to \infty} 1
	\end{equation*}
	for some appropriately chosen~$\mathsf{k}$.

	Fix a realization of the stub count sequence~$(D_i)_{i \in [n]}$ for which~$\mathcal A_n$ occurs.  For each~$i \in [n]$, define
	\[
		j^*(i):= \lfloor 2(\log n)^{-10}  (n/i) \rfloor.
	\]
	We will split the upper bound on the degree of~$i$ into three cases.

	\emph{First}, assume that~$\frac{2n}{3(\log n)^{10}} < i \le n$, so that~$j^*(i) \le 2$. In this case, we bound
	\begin{align*}
		\deg(i) \le D_{(i)} \le D_{(\lfloor 2n/(3(\log n)^{10})\rfloor)} &\le (\log n)^{\frac{3}{\tau-1}}\cdot \left( \frac{n}{\lfloor 2n/(3(\log n)^{10})\rfloor} \right)^{\frac{1}{\tau-1}}\\
		&\hspace{-2cm}\le (\log n)^{\frac{3}{\tau-1}}\cdot \left((\log n)^{11}\right)^{\frac{1}{\tau-1}} \le (\log n)^{14/(\tau-1)} \cdot \frac{n}{i},
	\end{align*}
	since~$\frac{n}{i} \ge 1$.

	\emph{Second}, assume that~$i \in \{1,2\}$. In this case, we use the trivial deterministic bounds~$\deg(1) \le n$ and~$\deg(2) \le n$.

	\emph{Third,} assume that~$3 \le i \le \frac{2n}{3(\log n)^{10}}$.
We bound
	\begin{align*}
		\deg(i) &\le j^*(i) + |\{j > j^*(i):\; i \sim j\}| \le \frac{n}{i} + |\{j > j^*(i):\; i \sim j\}|.
	\end{align*}
	In order to bound the second term on the right-hand side, we proceed as in the proof of Lemma~\ref{lem_event_Bn}, and pair the stubs of~$i$ one by one. Define
	\begin{equation}\label{eq_notation_Lnj}
	L_n(j):= \sum_{k=j}^n D_{(k)}\quad \text{and} \quad L_n:=L_n(1)=\sum_{\ell \in [n]} D_{(\ell)}.
	\end{equation}
	 The number of stubs of~$i$ that get matched to stubs of vertices~$j$ with~$j > j^*(i)$ is stochastically dominated by a random variable~$Z_i$ with distribution
	\begin{equation}
		Z_i \sim \mathrm{Bin}\left(D_{(i)},\frac{L_n(j^*(i))}{L_n - 2D_{(i)}} \right).
	\end{equation}
	Recall that, since~$\mathcal A_n$ occurs, we have
	\begin{equation} \label{eq_recall_Di} D_{(i)} \le (\log n)^{3/(\tau-1)}\cdot (n/i)^{1/(\tau-1)}.\end{equation}
		For all~$j \ge 2$ we bound
	\begin{equation}\label{eq_upper_bound_Lnj}
		\begin{split}
		&L_n(j) \le \sum_{k=j}^n (\log n)^{\frac{3}{\tau-1}}\cdot \left( \frac{n}{k}\right)^{\frac{1}{\tau-1}}\\
		&\le ((\log n)^3 n)^{\frac{1}{\tau-1}} \int_{j-1}^\infty t^{-\frac{1}{\tau-1}}\;\mathrm{d}t = \frac{\tau-1}{2-\tau}((\log n)^3 n)^{\frac{1}{\tau-1}} \cdot (j-1)^{-\frac{2-\tau}{\tau-1}}.
		\end{split}
	\end{equation}
	Bounding~$j^*(i)-1 \ge \frac{1}{2(\log n)^{10}}\frac{n}{i}$, we obtain
	\begin{equation}\label{eq_bound_Sstar}
		L_n(j^*(i)) \le (\log n)^{\mathsf{k}_0}n i^{\frac{2-\tau}{\tau-1}}.
	\end{equation}
	where~$\mathsf{k}_0$ only depends on~$\tau$. 

	Next, since~$i \ge 3$, we have
	\[L_n - 2D_{(i)} \ge L_n - D_{(i)} - D_{(i-1)} = \sum_{k \in [n]\backslash \{i-1,i\}} D_{(k)} \ge  D_{(1)};\] again using the assumption that~$\mathcal A_n$ occurs, we obtain
	\begin{equation}\label{eq_bound_Ln}
		L_n - 2D_{(i)} \ge (\log n)^{-\frac{3}{\tau-1}} \cdot n^{\frac{1}{\tau-1}}.
	\end{equation}
	
	Now, putting together~\eqref{eq_recall_Di},~\eqref{eq_bound_Sstar} and~\eqref{eq_bound_Ln}, there is a constant~$\mathsf{k}_1 > 0$ depending only on~$\tau$ such that
	\[
		\mathbb E[Z_i] \le (\log n)^{\mathsf{k}_1} \cdot \frac{n}{i}.
	\]
	We use the bound, for~$X \sim \mathrm{Bin}(N,q)$:
\begin{equation}
	\label{eq_other_chernoff}
		\mathbb P(X \ge x) \le \left( \frac{eNq}{x} \right)^x,\quad x > Nq.
\end{equation}
	This gives
	\[
		\mathbb P(Z_i >  (\log n)^{\mathsf{k}_1+1} \cdot n/i \mid (D_k)_{k \in [n]}) \le (\log n/e)^{-(\log n)^{\mathsf{k}_1+1} n/i}.
	\]

	We have now concluded the three cases. Using the bounds we obtained in each case allows for a union bound over~$i \in [n]$ that shows that~$\deg(i) \le (\log n)^{\mathsf{k}} n/i$ with high probability as~$n \to \infty$, for some~$\mathsf{k} > 0$ depending only on~$\tau$.
\end{proof}

\begin{proof}[Proof of Lemma~\ref{lem_bounds_degrees}]
	The statement follows directly from Corollary~\ref{cor_lower_bound} and Lemma~\ref{lem_upper_degrees}.
\end{proof}

\begin{proof}[Proof of Lemma~\ref{lem_degrees_into_sats}]
	For each~$i \in [n]$, we write
	\[
		N_\epsilon(i) := |\{j: n^{1-\epsilon} \le j \le n,\; i \sim j \}|.	\]
We will prove that
	\[\mathbb P\left( \left. N_\epsilon(i) \le \log n +  (\log n)^{\mathsf{k}'} \cdot \left(\frac{n^\epsilon}{i}\right)^{\frac{2-\tau}{\tau-1}} \cdot \frac{n}{i}     \;\;\forall  i \in [n],\; i \ge n^{\epsilon} \; \right| \; \mathcal A_n  \right) \xrightarrow{n \to \infty} 1,  \]
	for some appropriately chosen~$\mathsf{k}'$;
	together with Lemma~\ref{lem_event_An}, this will conclude the proof. 

	Fix a realization~$(D_i)_{i \in [n]}$ of the stub count sequence for which the event~$\mathcal A_n$ occurs.
	Fix~$i$ with~$n^\epsilon \le i \le n$. By matching the stubs of~$i$ one by one as in the proof of Lemma~\ref{lem_event_Bn}, we see that~$N_\epsilon(i)$ is stochastically dominated by a random variable~$Z_i'$ with
	\[
		Z_i' \sim \mathrm{Bin}\left(D_{(i)}, \frac{L_n(j^*)}{L_n - 2D_{(i)}}\right),
	\]
	where~$j^*:= \lfloor n^{1-\epsilon}\rfloor$ and~$L_n(j^*)$ and~$L_n$ are as in~\eqref{eq_notation_Lnj}. Bounding as in~\eqref{eq_upper_bound_Lnj}, we have
	\begin{equation}\label{eq_Lnjstar}
		L_n(j^*) \le (\log n)^{\mathsf{k}_0'}\cdot n^{1+\epsilon \frac{2-\tau}{\tau-1}},
	\end{equation}
	where~$\mathsf{k}_0'$ only depends on~$\tau$.
	Also using~\eqref{eq_recall_Di} and~\eqref{eq_bound_Ln} gives
	\[
		\mathbb E[Z_i'] \le (\log n)^{\mathsf{k}_1'} \cdot \frac{n}{i} \left( \frac{n^\epsilon}{i} \right)^{\frac{2-\tau}{\tau-1}}
	\]
	for some~$\mathsf{k}_1'$.
	Using the bound~\eqref{eq_other_chernoff} with~$x=x_i:=\log n \cdot (1 + \mathbb E[Z_i'])$, we obtain
	\begin{align*}
		\mathbb P\left( N_\epsilon(i) > x_i \mid (D_k)_{k \in [n]} \right)  &\le \mathbb P\left(Z_i' > x_i \right) \le \left(\frac{e \mathbb E[Z_i']}{x_i} \right)^{x_i}. 
	\end{align*}
	Bounding~$x_i \ge \log n\cdot \mathbb E[Z_i']$ in the denominator and~$x_i \ge \log n$ in the exponent, the right-hand side is at most
	\[
		\left( \frac{\log n}{e}\right)^{-\log n} = n^{-\log \log n + 1}
	\]
	We then have~$n \cdot \mathbb P(Z_i' > x_i) \to 0$ as~$n \to \infty$, so a union bound over~$i$ completes the proof.	
\end{proof}

\begin{proof}[Proof of Lemma~\ref{lem_sizes_satellites}]
	Fix~$\epsilon > 0$ to be taken small and~$m \in \mathbb N$ to be taken large, depending on~$\epsilon$.

	We will prove that the conditional probability of the event described in the statement of the lemma given~$\mathcal A_n$ tends to~$1$ as~$n \to \infty$. 

	Fix a realization~$(D_i)_{i \in [n]}$ of the stub count sequence for which~$\mathcal A_n$ occurs. As before, write~$j^*:=\lfloor n^{1-\epsilon} \rfloor$. Let~$i$ be a vertex with~$j^* \le i \le n$. Since~$\mathcal A_n$ occurs, all stub counts in~$\{j^*,\ldots,n\}$ are at most
	\[
		M:=\left\lceil(\log n)^{3/(\tau-1)}\cdot(n/j^*)^{1/(\tau-1)}\right\rceil.
	\]

	We explore the connected component of~$i$ in the subgraph induced by~$\{j: j^* \le j \le n\}$ in a breadth-first manner. This means that we form edges by matching stubs one by one (similarly to what was done in the proof of Lemma~\ref{lem_event_Bn}), ignoring edges that leave~$\{j: j^* \le j \le n\}$. We stop the exploration either when we run out of stubs to reveal (due to having exhausted a connected component), or when we have discovered~$m+1$ vertices, including~$i$. Before this stopping time, every stub being explored belongs to one of at most~$m$ discovered vertices, so at most~$mM$ matchings are performed.

	Assume that the exploration has run up to a certain step~$\ell < mM$, and there is at least one more stub to be explored next. The probability that this stub is matched to a vertex in~$\{j^*,\ldots, n\}$ is at most
	\[
		q:=\frac{L_n(j^*)}{L_n-2mM},
	\]
	where~$L_n(j^*)$ and~$L_n$ are as in~\eqref{eq_notation_Lnj}. 

	The subgraph that we reveal in the exploration can be made into a tree if, when forming a new edge, we discard it in case it joins two already discovered vertices.
	Since every vertex has at most~$M$ stubs and each matching discovers a new vertex with conditional probability at most~$q$, the exploration up to this stopping time is stochastically dominated by a Galton--Watson tree~$\mathcal T$ with offspring distribution~$X\sim\mathrm{Bin}(M,q)$.
	In particular, the probability that the exploration finds more than~$m$ vertices is at most~$ \mathbb P(|\mathcal T| > m)$.
	
	To bound this probability, we use the random walk encoding of the Galton--Watson tree. Specifically, we let~$X_1,X_2,\ldots$ be independent and identically distributed, with~$X_i \sim \mathrm{Bin}(M,q)$, and then we have
	\[
		\mathbb P(|\mathcal T| > m) \le \mathbb P(X_1+\cdots+X_m \ge m). 
	\]
	Note that~$X_1+\cdots+X_m \sim \mathrm{Bin}(Mm,q)$. In order to give an upper bound on this probability, we bound~$M$ and~$q$.
	\[
		M = \left\lceil(\log n)^{\frac{3}{\tau-1}}\left( \frac{n}{\lfloor n^{1-\epsilon} \rfloor}\right)^{\frac{1}{\tau-1}}\right\rceil \le 1+2^{\frac{1}{\tau-1}}(\log n)^{\frac{3}{\tau-1}} \cdot n^{\frac{\epsilon}{\tau-1}}.
	\]
	To bound~$q$, we use the upper bound~\eqref{eq_Lnjstar} for~$L_n(j^*)$, and the lower bound (which holds for~$n$ large enough, depending on~$m$, since~$mM=o(D_{(1)})$ for~$\epsilon<1$):
	\[
		L_n \ge D_{(1)} \quad \Longrightarrow \quad L_n - 2mM \ge \frac12 D_{(1)} \ge \frac{1}{2(\log n)^{\frac{3}{\tau-1}}} \cdot n^{\frac{1}{\tau-1}}.
	\]
	These bounds give
	\[
		q = \frac{L_n(j^*)}{L_n-2mM} \le  (\log n)^{\mathsf{k}_2}\cdot n^{-(1-\epsilon) \frac{2-\tau}{\tau-1}},
	\]
	where~$\mathsf{k}_2$ depends only on~$\tau$. Hence,
	\[
		Mq \le (\log n)^{\mathsf{k}_2'} \cdot n^{-(1-\epsilon) \frac{2-\tau}{\tau-1}+\frac{\epsilon}{\tau-1}}.
	\]
	Since~$\tau \in (1,2)$, we can choose~$\epsilon$ small enough that~$(1-\epsilon) \frac{2-\tau}{\tau-1}-\frac{\epsilon}{\tau-1} > 0$.

	Using the bound~\eqref{eq_other_chernoff}, we have
	\begin{align*}
		&\mathbb P(X_1 + \cdots +X_m \ge m) \le (eMq)^{m} = \exp\left\{ \log(eMq)\cdot m \right\}\\
		&\qquad \le \exp \left\{\left(1+ \mathsf{k}_2' \log \log n - \left((1-\epsilon) \frac{2-\tau}{\tau-1}-\frac{\epsilon}{\tau-1} \right)\log n\right)\cdot m \right\}.
	\end{align*}
	Taking~$m$ large enough and~$n$ large enough, this is smaller than~$\exp\{-2\log n\}=n^{-2}$.

	We have thus proved that the probability that~$i$ belongs to a connected component with more than~$m$ vertices is~$o(1/n)$. A union bound over~$i$ concludes the proof.
\end{proof}

\subsection{Proofs of statements from Section~\ref{s_second_no_coex}}
\label{s_proof_graph_2}

As in Section~\ref{s_second_no_coex}, here we assume that~$\tau \ge 2$.

	We start proving Lemma~\ref{lem_diameter}. When $\tau>2$, the statement is an immediate consequence of~\cite[Theorem 7.19]{remco}. However, since we also need to cover~$\tau = 2$, we give a different proof (which in fact covers all~$\tau > 1$), taking advantage of the fact that our law~$p$ is supported on integers larger than or equal to~$3$. 

	We will use the following lemma. Given a deterministic sequence of positive integers~$\mathbf{d}=(d_1,\ldots,d_n)$, let~$G^{\mathbf{d}}_n$ be the configuration model constructed using~$\mathbf{d}$ as the stub-count sequence.
	\begin{lemma}\label{lem_new_cm}
		There exists~$C_0 > 0$ such that
		\[
			\inf_{\mathbf{d}} \mathbb P(\mathrm{diam}(G^{\mathbf{d}}_n) \le C_0\log n) \xrightarrow{n \to \infty} 1,
		\]
		where the infimum is taken over all sequences~$\mathbf d=(d_1,\ldots,d_n)$ with~$d_i \in \{3,4,5\}$ for all~$i$ and~$\sum_i d_i$ even.
	\end{lemma}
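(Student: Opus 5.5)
We prove the lemma for the configuration multigraph (self-loops and multi-edges allowed), since distances are unchanged by erasure. The approach has two parts: a uniform expansion estimate for small vertex sets, and a two-sided breadth-first exploration that grows balls until they meet. Throughout, let $L=\sum_i d_i\in[3n,5n]$ and write $\partial A$ for the number of unmatched stubs at $A$, i.e. stubs of $A$ whose partner lies outside $A$.

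\textbf{Step 1 (expansion).} We claim that, uniformly in $\mathbf d$ and with high probability, every connected set $A$ with $|A|\le n/2$ has few internal edges. More precisely, the number of edges inside $A$ is at most $|A|-1+\theta|A|$ for some small $\theta<1/2$. This gives
\[
\partial A\ \ge\ 3|A|-2(|A|-1+\theta|A|)\ \ge\ (1-2\theta)|A|.
\]

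\begin{itemize}
\item For $|A|\le \delta\log n$ (local tree-likeness) we prove this by a first-moment count. The expected number of connected sets of size $k$ containing at least $k+j$ internal edges is at most $n\,(C)^k\,(Ck/n)^{j+1}$. For $j\ge 1$ this is $o(1)$ summed over $k\le\delta\log n$, so every small connected set spans at most one cycle, and hence $\partial A\ge |A|-2\ge$ a positive proportion once $|A|\ge 3$.
\item For $\delta\log n\le |A|\le n/2$ we use the standard bound on the probability that a set of $s=\sum_{i\in A}d_i$ stubs is matched with at most $(1-\theta')s$ of them internally. Its probability is $\binom{n}{|A|}$-summable: it is at most $\exp(-c s)\cdot(s/L)^{cs}$, which beats $\binom{n}{k}\le (en/k)^k$ since $s\ge 3k$ and $d_i\ge3$.
\end{itemize}

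This union bound over subsets is the main obstacle. The exponents must be checked carefully, and the minimum degree $3$ is exactly what makes the internal-matching probability small enough. The upper bound $d_i\le5$ makes all constants uniform in $\mathbf d$.

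\textbf{Step 2 (exploration).} Fix vertices $u,v$. Explore the ball around $u$ by matching boundary stubs one at a time. By Step 1, the boundary stub count of the explored set $A_t$ grows geometrically in the number of BFS generations, roughly by a factor $(1+c)$ per generation, until $|A_t|$ reaches $n/2$. This takes at most $C_1\log n$ generations. To avoid conditioning issues, it is cleaner to use Step 1 deterministically, applying it to the final graph and the balls $B_r(u)$. Then $|B_{r+1}(u)|\ge |B_r(u)|+\partial B_r(u)/5\ge(1+c)|B_r(u)|$ while $|B_r(u)|\le n/2$.

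Hence both $B_R(u)$ and $B_R(v)$ have more than $n/2$ vertices for $R=C_1\log n$. They therefore intersect, so $d(u,v)\le 2R$. This holds simultaneously for all pairs on the high-probability event of Step 1, giving the claim with $C_0=2C_1$.
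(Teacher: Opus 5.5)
Your argument is correct in substance, but it follows a different route from the paper. The paper gives no proof: it defers to the standard diameter arguments of \cite[Section~7]{remco}, which explore the neighbourhoods of a pair of vertices via branching-process couplings until they are large, and then show that they connect. That approach gives the sharp constant, but its coupling and concentration estimates must be checked uniformly in $\mathbf d$ and combined with a union bound over pairs.

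You instead prove one global property, edge expansion $\partial A\ge c|A|$ for all $|A|\le n/2$, and then grow balls deterministically on that event. This handles all pairs at once, and uniformity in $\mathbf d$ is automatic because your union bound only uses $3\le d_i\le 5$. The price is a crude $C_0$ and a first-moment bound over all subsets up to size $n/2$. Your Step 1 is essentially the conductance estimate \cite[Lemma 12]{MR2957935} that the paper itself uses for Lemma~\ref{lem_large_complements}. You could cite it (after checking its hypotheses, which are mild for bounded degrees with minimum degree $3$) instead of redoing the union bound, and Step 2 would then finish the proof.

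Two corrections to Step 1.

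First, the headline claim that every connected $A$ spans at most $|A|-1+\theta|A|$ edges with $\theta<1/2$ is false for some admissible $\mathbf d$. Suppose half the $d_i$ equal $5$ and half equal $3$, and let $A$ be the set of degree-$5$ vertices. Then $s=\frac52 n$, $L=4n$, and with high probability $A$ spans $(1+o(1))\frac{s^2}{2L}=(1+o(1))\frac{25}{16}|A|$ edges. Hence some component of $A$ spans more than $\frac32$ times its size. What your two bullets actually prove, and all that Step 2 needs, is $\partial A\ge c|A|$ directly: $\partial A\ge |A|$ from the one-cycle bound for small connected sets, and $\partial A>\theta' s\ge 3\theta'|A|$ for larger sets.

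Second, in the large-set bullet the bad event is that \emph{at least} (not at most) $(1-\theta')s$ stubs are matched internally. Also, a bound of the form $e^{-cs}(s/L)^{cs}$ beats $(en/k)^k$ at $s=3k$, $L=3n$ only if $c>1/3$, which leaves very little room when $k$ is linear in $n$. The robust source of the margin is
\[
\frac{(s-1)!!\,(L-s-1)!!}{(L-1)!!}=\binom{L/2}{s/2}\Big/\binom{L}{s}\le (L+1)\,e^{-\frac L2 H(s/L)},\qquad H(y)=-y\log y-(1-y)\log(1-y),
\]
together with $\frac L2 H(s/L)\ge \frac32\, nH(k/n)$. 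This inequality follows from monotonicity and concavity of $H$, since $\min\{s,L-s\}\ge 3k$ (using $k\le n/2$) and $L\ge 3n$. It beats $\binom nk\le e^{nH(k/n)}$ by $e^{-\frac12 nH(k/n)}$. Apply the same bound to the $s-b$ internally matched stubs; this leaves enough room, for small $\theta'$, to absorb the factor $\sum_{b\le \theta' s}\binom sb\le (s+1)e^{sH(\theta')}$ coming from the stubs that leave $A$.
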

	This can be proved with the standard methods of bounding the diameter of the configuration model; see for instance~\cite[Section~7]{remco}.

\begin{proof}[Proof of Lemma~\ref{lem_diameter}]
We note that the graph diameter is not affected by the deletion of self-loops and of parallel edges. Hence, in this proof we may work with the standard, non-erased configuration model.

	Condition on the degree sequence~$(D_1,\ldots,D_n)$; assume that its decreasing ordering~$(D_{(1)},\ldots,D_{(n)})$ satisfies the event~$\mathcal A_n$ from~\eqref{eq_def_event_An}. For each~$i$, we can find (unique) $a_i \in \mathbb N_0$ and~$b_i \in \{3,4,5\}$ such that~$D_i=3a_i+b_i$, with~$a_i \in \mathbb N_0$ and~$b_i \in \{3,4,5\}$.

	We now construct an auxiliary graph~$\widehat{G}_n$. The vertex set is
	\[\widehat{V}_n:=\{(i,r):i \in [n],\; r \in [a_{i}+1]\}.\]
	For each~$i$, we endow vertices~$(i,1),\ldots,(i,a_{i})$ with~$3$ stubs, and we endow vertex~$(i,a_{i}+1)$ with~$b_{i}$ stubs. We then construct the set of edges by taking a perfect matching of all stubs in this graph. Note that if for each~$i$ we glue together the vertices~$(i,1),\ldots,(i,a_i+1)$, we recover the original random graph~$G_n$. Moreover, it is readily seen that~$\mathrm{diam}(\widehat{G}_n) \ge \mathrm{diam}(G_n)$.

	Now, by Lemma~\ref{lem_new_cm}, the diameter of~$\widehat{G}_n$ is at most~$C_0 \log(|\widehat{V}_n|)$ with high probability. In order to bound~$|\widehat{V}_n|$, we first note that~$a_i \le 3D_i$ for each~$i$. Using the assumption that the event~$\mathcal A_n$ occurs, we can bound
	\begin{align*}
		|\widehat{V}_n| \le \sum_{i=1}^n (a_i+1) \le 4 \sum_{i=1}^n D_i \le 4 (\log n)^{\frac{3}{\tau-1}}\sum_{i=1}^n \left( \frac{n}{i}\right)^{\frac{1}{\tau-1}}.
	\end{align*}
We can choose~$k > 0$ such that the right-hand side above is smaller than~$n^k$ when~$n$ is large enough. 
Then, with high probability as~$n \to \infty$, we have
	\[
		\mathrm{diam}(\widehat{G}_n) \le C_0 \log(|\widehat{V}_n|) \le C_0 k \log n,
	\]
	concluding the proof.
\end{proof}

To prove Lemma~\ref{lem_max_degree}, we will need the following.

\begin{lemma}\label{lem_flatten}
For any $\tau>2$ and any~$\epsilon > 0$, with high probability as~$n \to \infty$, the number of edges lost in erasing the configuration model is smaller than~$\epsilon n$.
\end{lemma}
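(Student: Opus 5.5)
The plan is to bound the expected number of erased edges, which come from self-loops and from surplus parallel edges, by a quantity that is $o(n)$, and then apply Markov's inequality. Condition on the stub-count sequence $(D_1,\dots,D_n)$ and write $L_n=\sum_i D_i$. Since $\tau>2$, the law $p$ has finite mean $\mu$. By the law of large numbers, $L_n\ge \mu n/2$ with high probability, so we may work on this event.

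\emph{Self-loops.} Pair the stubs sequentially. The expected number of self-loops at $i$ is at most $D_i^2/(L_n-1)$. Therefore the expected number of self-loops is at most $\sum_i D_i^2/(L_n-1)$.

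\emph{Multiple edges.} The number of edges erased between $i$ and $j$ is at most $\binom{X_{ij}}{2}$, where $X_{ij}$ is the number of edges between them. Its conditional expectation is bounded by the expected number of pairs of distinct $i$--$j$ edges, which is at most $D_i^2D_j^2/(L_n-3)^2$. Summing over pairs gives at most $(\sum_i D_i^2)^2/(L_n-3)^2$.

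The issue is that $\sum_i D_i^2$ need not be $O(n)$, since for $\tau\in(2,3]$ the variance is infinite. So the degrees will be truncated. Fix a large constant $K$. Every edge incident to a vertex with $D_i>K$ is charged to that vertex. The erased edges at such vertices number at most $\sum_i D_i\mathds 1\{D_i>K\}$. By the law of large numbers this is at most $2n\,\mathbb E[D\mathds 1\{D>K\}]\le \epsilon n/2$ with high probability, provided $K$ is chosen large enough; this uses finite mean.

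For erased edges among vertices with $D_i,D_j\le K$, the bounds above apply with $D$ replaced by $D\wedge K$. The expected number of such erased edges is at most $K n\cdot K/(L_n-1)+(K n\cdot K)^2/(L_n-3)^2=O_K(1)$. More carefully, $\sum_i (D_i\wedge K)^2\le K^2 n$, so the pair bound gives $O(K^4)$, which is $o(n)$. Markov's inequality then shows this contribution is below $\epsilon n/2$ with high probability.

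Adding the two contributions gives the claim. The only delicate step is the truncation, which is needed to avoid the infinite second moment.
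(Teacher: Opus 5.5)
Your proof is correct, but it takes a different route from the paper's. The paper works per stub. It writes $2\mathbb E R_n/n \le \sum_j jp(j)L_n(j)$, where $L_n(j)$ is the probability that a given stub of a degree-$j$ vertex is lost to erasing. It imports from the literature (Equation~(9) of the cited reference) that $L_n(j)\to 0$ for each fixed $j$. It then concludes by a limit interchange justified by $\sum_j jp(j)<\infty$, followed by Markov's inequality.

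You make the truncation explicit instead. Erased edges touching vertices of degree above $K$ are bounded crudely by $\sum_i D_i\mathds 1\{D_i>K\}$, which is a small multiple of $n$ by the law of large numbers and finite mean. For the bounded-degree part you compute the first and second factorial moments of self-loops and parallel edges directly. This gives an expected count of $O(K^4)$, so bounded in $n$ rather than merely $o(n)$.

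Both arguments rest on the same input: finite mean, so that the tail $\sum_{j>K} jp(j)$ is small. Both implicitly split into large and small degrees; in the paper the split is hidden inside the convergence theorem. Yours is self-contained and quantitative. The paper's is shorter, at the cost of outsourcing the fixed-degree estimate to the literature.

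One small simplification: since $p$ is supported on $\{3,4,\dots\}$, you have $L_n\ge 3n$ deterministically, so you do not need the law-of-large-numbers step for $L_n$.
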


\begin{proof}
	We assume that some arbitrary order is taken for the stubs of the configuration model. For each~$j \in \mathbb N$, we define
	\[L_n(j):=\mathbb P(\text{stub 1 of vertex 1 is lost due to erasing in~$G_n$} \mid D_1=j)\]
	(in the conditioning, we leave the stub-counts of all vertices~$i \neq 1$ unrevealed).

	Let~$R_n$ denote the number of edges deleted due to erasing. Since each edge removes two stubs, using symmetry, we can bound
\[
\frac{2\mathbb E R_n}{n}
	\leq \sum_{j\ge1}jp(j)L_n(j).
\]
	Equation~(9) in~\cite{MR2266448} gives~$L_n(j)\xrightarrow{n \to \infty} 0$ for every~$j$. Hence, using the monotone convergence theorem, we obtain
	\[
		\frac{\mathbb E R_n}{n} \xrightarrow{n \to \infty} 0.
	\]
	The result then follows from Markov's inequality.
\end{proof}

\begin{proof}[Proof of Lemma~\ref{lem_max_degree}]
	For~$\tau > 2$, this follows from Lemma~\ref{lem_event_An} (which guarantees that there are vertices whose stub-counts are powers of~$n$, with high probability) together with \cite[Equation 4.9]{MR3876906} (which tells us that the largest degree in the erased graph is with high probability asymptotic to its stub-count).

	For~$\tau=2$, label the vertices in non-increasing order of their stub-counts and assume that the stub-counts satisfy the event~$\mathcal A_n$ of~\eqref{eq_def_event_An}. We can then repeat the proof of Lemma~\ref{lem_event_Bn} (with the minor adjustment that an extra~$\log n$ term appears in the upper bound for~$L_n$) to show that there exists~$K > 0$ such that with high probability as~$n \to \infty$, every two distinct vertices~$i,j$ with~$ij \le n/(\log n)^K$ are neighbours in~$G_n$. Then, with high probability as~$n \to \infty$, the vertex with highest stub-count neighbours every vertex~$j$ with~$j \le n/(\log n)^K$.
\end{proof}

\subsection{Proofs of statements from Section~\ref{s_coex}}
As in Section~\ref{s_coex}, here we assume that~$\tau > 2$.

\begin{proof}[Proof of Lemma~\ref{lem_small_complements}]
	Choose~$M \in \mathbb N$ large enough that~$\sum_{k \ge M} kp(k) < \epsilon/2$ and then choose~$\delta>0$ with~$\delta < p(\{k:k\ge M\})/4$. By the strong law of large numbers, with high probability when~$n \to \infty$, the stub-count sequence satisfies
	\begin{equation}\label{eq_first_counts}
		\frac{1}{n}\sum_{i}D_i \mathds{1}\{D_i \ge M\} \le 2\sum_{k \ge M} kp(k) < \epsilon
	\end{equation}
	and
	\begin{equation}
		\label{eq_second_counts}
		\frac{1}{n} \sum_i \mathds{1}\{D_i \ge M\} > \frac12 p(\{k:k\ge M\}) > 2\delta.
	\end{equation}
	Assume these conditions are satisfied. Recall that~$(D_{(1)},\ldots, D_{(n)})$ are the stub-counts in decreasing order. Note that~\eqref{eq_second_counts} implies that~$D_{(1)},\ldots,D_{(\lfloor 2 \delta n \rfloor)} \ge M$. Given a set of vertices~$A$ with~$|A|\le \delta n$, we have
	\begin{equation*}
		e\big(A:A^\complement\big)\le \sum_{i \in A} D_i \le \sum_{i=1}^{\lfloor \delta n \rfloor} D_{(i)} \le \sum_{i=1}^{n} D_i \mathds{1}\{D_i \ge M\} < \epsilon n. \qedhere
	\end{equation*}
\end{proof}

\begin{proof}[Proof of Lemma~\ref{lem_large_complements}]
	We denote by~$e(A:B)$ the number of edges between~$A$ and~$B$ in~$G_n$, and~$e^{\rm full}(A:B)$ the same quantity in the unerased version of~$G_n$ (that is, the graph still containing parallel edges).

	By \cite[Lemma 12]{MR2957935}, with high probability as~$n \to \infty$, 
\[
	e^{\rm full}\big(A:A^{\,\complement}\big) \geq \frac{1}{100} \min\bigg\{ \sum_{i\in A} D_i,\; \sum_{i \in A^\complement} D_i \bigg\} \quad \text{for all }A \subseteq [n].
\]
with high probability.
Because our graph has $D_i\geq 3$ for every \(i \in V\),
\[
e^{\rm full}\big(A:A^{\,\complement}\big) \geq \frac{3}{100}\min\{|A|,|A^c|\}.
\]
	In particular, for any~$\gamma \in (0,1/2)$, with high probability as~$n \to \infty$ we have
\[
    \min_{A:\gamma n \leq |A| \leq (1-\gamma)n} e\big(A:A^{\,\complement}\big) \geq \frac{\gamma n}{50}.
\]
	The desired result now follows from Lemma~\ref{lem_flatten}, which tells us that with high probability, there are $o(n)$ edges differing between the unerased and the erased graph.
\end{proof}

\printbibliography

\end{document}